\newcommand{\mc}{\multicolumn}
\renewcommand{\Re}{{\rm I}\! {\rm R}}
\theoremstyle{thmstyleone}%
\newtheorem{theorem}{Theorem}
\newtheorem{proposition}[theorem]{Proposition}%
\theoremstyle{thmstyletwo}%
\newtheorem{remark}{Remark}%
\newtheorem{lemma}{Lemma}%
\theoremstyle{thmstylethree}%
\begin{document}

\title[Article Title]{A Highly Efficient Adaptive-Sieving-Based Algorithm for the High-Dimensional Rank Lasso Problem}


\author[1]{\fnm{Xiaoning} \sur{Bai}}\email{bxn\_2922@163.com}

\author*[2]{\fnm{Qingna}\sur{Li}}\email{qnl@bit.edu.cn}

\affil[1]{\orgdiv{School of Mathematics and Statistics}, \orgname{Beijing Institute of Technology}, \orgaddress{\street{No. 5 Zhongguancun South Street, Haidian District}, \city{Beijing}, \postcode{100081},  \country{China}}}

\affil*[2]{\orgdiv{School of Mathematics and Statistics/ Beijing Key Laboratory on MCAACI/ Key Laboratory of Mathematical Theory and Computation in Information Security}, \orgname{Beijing Institute of Technology}, \orgaddress{\street{No. 5 Zhongguancun South Street, Haidian District}, \city{Beijing}, \postcode{100081}, \country{China}}}

\abstract{The high-dimensional rank lasso (hdr lasso) model is an efficient approach to deal with high-dimensional data analysis. It was proposed as a tuning-free robust approach for the high-dimensional regression and was demonstrated to enjoy several statistical advantages over other approaches. The hdr lasso problem is essentially an $L_1$-regularized optimization problem whose loss function is Jaeckel's dispersion function with Wilcoxon scores. Due to the nondifferentiability of the above loss function, many classical algorithms for lasso-type problems are unable to solve this model. In this paper, inspired by the adaptive sieving strategy for the exclusive lasso problem \cite{SDF}, we propose an adaptive-sieving-based algorithm to solve the hdr lasso problem. The proposed algorithm makes full use of the sparsity of the solution. In each iteration, a subproblem with the same form as the original model is solved, but in a much smaller size. We apply the proximal point algorithm to solve the subproblem, which fully takes advantage of the two nonsmooth terms. Extensive numerical results demonstrate that the proposed algorithm (AS-PPA) is robust for different types of 
	noises, which verifies the attractive statistical property as shown in \cite{Wang}. Moreover, AS-PPA is also highly efficient, especially for the case of high-dimensional features, compared with other methods.}

\keywords{Adaptive sieving, proximal point algorithm, semismooth Newton's based augmented Lagrangian method, high-dimensional rank lasso}

\pacs[MSC Classification]{90C06, 90C25}

\maketitle
\section{Introduction}\label{section1}
In this paper, we will design a highly efficient and robust algorithm for solving the convex composite optimization problems including the following high-dimensional rank lasso (hdr lasso) problem
\begin{equation}\label{eq19}
	\min\limits_{x\in\mathcal{X}}\ h\left(Ax-b\right)+\lambda
	\left\|x\right\|_1,
\end{equation}
where $h\ :\ \mathcal{Y}\to\Re$ is a nonsmooth convex function, $A\ :\ \mathcal{X}\to \mathcal{Y}$ is a linear map whose adjoint is denoted as $A^*$, and $\lambda>0$ is a given data. Here $\mathcal{X}$ and $\mathcal{Y}$ are two finite real dimensional Euclidean spaces, equipped with standard inner product $\langle\cdot,\cdot\rangle$ and $\left\|\cdot\right\|_1$ is the $L_1$ norm.

With the development of information technology and artificial intelligence, people face great challenges in data analysis due to the high dimensional features and the large-scale datasets. In the low-dimensional case, where the number of features is less than the number of samples, i.e. ($p<n$), the rank lasso problem was investigated by Wang and Li in 2009 \cite{low-rank-1}, where $h(u)$ in \eqref{eq19} is given by
\begin{equation}\label{eq55}
	h(u):=\frac{2}{n(n-1)}\mathop{\sum}\limits_{1\le i< j\le n}|u_i-u_j|,\ \text{where}\ u=b-Ax,
\end{equation}
and $A,\ b$ are given data defined by
\begin{equation*}
	A=\left[a_1,a_2,a_3,\dots,a_n\right]^\top\in\Re^{n\times p},\ b=\left[b_1;b_2;b_3;\dots;b_n\right]\in\Re^n.
\end{equation*}
That is,
\begin{equation}\label{eq30}
	\min\limits_{x\in\Re^p}\ \mathop{\sum}\limits_{1\le i< j\le n}\frac{2}{n(n-1)}\Big|(b_i-a_i^\top x)-(b_j-a_j^\top x)\Big|+\lambda\left\|x\right\|_1\\.
\end{equation}
The rank lasso model \eqref{eq30} basically replaces the quadratic loss function $\|Ax-b\|_2^2$ in the traditional lasso model by the loss function $h(\cdot)$ as defined in \eqref{eq55}. Minimizing the loss function $h(\cdot)$ in \eqref{eq55} is equivalent	to minimizing Jaeckel’s dispersion function with Wilcoxon scores:
\begin{equation*}
	\sqrt{12}\sum\limits_{i=1}^{n}\left[\frac{R(b_i-a_i^\top x)}{n+1}-\frac{1}{2}\right]\left(b_i-a_i^\top x\right),
\end{equation*}
where $R(b_i-a_i^\top x)$ denotes the rank of $b_i-a_i^\top x$ among $b_1-a_1^\top x,\dots ,b_n-a_n^\top x$. Therefore, \eqref{eq30} is referred to as the rank lasso model. The advantages of such rank-based methods include ``better power, efficiency at heavy-tailed distributions and robustness against various model violations and pathological data'' \cite[Page xv]{TP}. It is well-known that heavy-tailed distribution is ubiquitous in modern statistical analysis and machine learning problems, and may be caused by chance of extreme events or by the complex data generating process \cite{FJQ}. Heavy-tailed errors usually arise in climate data, insurance claim data, e-commerce data and many other scenarios \cite{Wang}. The traditional lasso model, though well-studied and popular used, has difficulty in dealing with heavy-tailed noises. In contrast, the hdr lasso model proposed by Wang et al. \cite{Wang} deals with the data where the number of features is much larger than the number of samples and it can adapt to a variety of errors, including heavy-tailed errors. Besides the above excellent property, the hdr lasso model also enjoys the following attractive property. A tuning-free parameter $\lambda$ was derived in \cite{Wang}, based on which, one does not need to solve a sequence of hdr lasso models with different $\lambda$'s. Fan et al. \cite{FJQcom} evaluated the tuning-free parameter $\lambda$ in \cite{Wang} is more interpretable, easier to select, and is independent of noise variance.

Due to the importance of problem \eqref{eq19}, a natural question from the numerical point of view is how to solve it efficiently, particularly in the scenario of the hdr lasso problem. To address this issue, below we will briefly review the existing methods for the lasso and rank lasso problems, respectively, then we will review the recent sieving strategies for these problems.

In terms of the lasso problem, many traditional algorithms are developed, such as the least angle regression (LARS) method \cite{Ef}, the pathwise coordinate descent method \cite{Tseng,Tseng2,Frie,Wu}. Since the proximal mapping of the $L_1$ norm is easy to calculate, algorithms such as fast iterative shrinkage-thresholding algorithm (FISTA) \cite{fista} and alternating direction method of multipliers (ADMM) \cite{ADMM} can also be applied to solve the lasso problem. Recently, Li et al. proposed the semismooth Newton augmented Lagrangian (Ssnal) \cite{LXD} method and fully exploited the second order sparsity of the problem through the semismooth Newton's method. Moreover, Ssnal can deal with a lasso-type of models where the loss function is smooth and convex. 

For the low-dimensional rank lasso (ldr lasso) problem, Wang and Li \cite{low-rank-1} combined the two parts of $L_1$ functions into an $L_1$ norm of a vector in the dimension of $(n(n-1)/2+p)$, which is then solved by transforming to the least absolute deviation (LAD) problem. Similarly, in \cite{ranklasso}, Kim et al. also transformed into LAD to deal with the low-dimensional datasets for $n>p$ and $p<50$, which can be solved by Barrodale-Roberts modified simplex algorithm \cite{BR}. In \cite{code}, Zoubir et al. used iterative reweighted least squares (IRWLS) to solve the rank lasso problem. IRWLS essentially approximates the $L_1$ norm by a sequence of the weighted $L_2$ regularized problems. Another way to deal with the rank lasso problem is to reformulate it as a linear programming problem, which may lead to the large scale of problems when dealing with the high-dimensional case. To summarize, the above recent efforts have been devoted to dealing with small-scale cases, i.e., the ldr lasso problem. In terms of solving the hdr lasso problem, very recently, Tang et al. \cite{WCJ} applied a proximal-proximal majorization-minimization algorithm. It can be seen from the numerical results that this method can efficiently solve the hdr lasso problem. 

Taking the sparsity of solutions into account, there are many literatures trying to use sieving strategies to solve the lasso problems. Ghaoui et al. \cite{Safe} proposed a strategy (referred to as the safe screening rule) in 2010 to eliminate features that are guaranteed not to exist after solving the learning problem. Tibshirani et al. \cite{Tib} proposed the so-called strong rules that are very simple and yet screened out far more predictors than the safe rules in \cite{Safe}. As explained in \cite{Tib}, the cost of this improvement is the possibility of mistakenly discarding active predictors. In 2014, Bonnefoy et al. \cite{dyna} proposed a dynamic safe rule which speeds up many optimization algorithms by reducing the size of the dictionary during iterations, discarding elements that are not part of the lasso solution. Wang et al. \cite{DPP} proposed an efficient and effective screening rule via dual polytope projections (DPP), which is mainly based on the uniqueness and nonexpansiveness of the optimal dual solution due to the fact that the feasible set in the dual space is a convex and closed polytope. In \cite{Fer}, Fercoq et al. leveraged the computations of duality gaps to propose a simple strategy unifying both the safe rule \cite{Safe} and the dynamic safe rule \cite{dyna}.

To summarize, all the above sieving algorithms aim at solving the lasso problem, and filter active variables through dual problems or dual gaps. In \cite{SDF}, an adaptive sieving strategy was proposed to solve the exclusive lasso problem where the $L_1$ regularizer is replaced by the weighted exclusive lasso regularizer. Different from the safe rules as in \cite{Tib,DPP,Safe}, the adaptive sieving strategy in \cite{SDF} starts with a few nonzero components and gradually adds more nonzero components by checking the KKT condition of the exclusive lasso problem. Very recently, based on the dual problem of \eqref{eq19}, Shang et al. \cite{SafeRank} applied the dual circumscribed sphere technique to build up a safe feature screening rule for the rank lasso problem in the scenario that the number of features $p$ is much larger than the number of samples $n$. Numerical results in \cite{SafeRank} indicate that this screening method can shorten the time by about half. In \cite{SDF2}, Yuan et al. generalized the results of \cite{SDF} to apply this adaptive sieving strategy to the case where $x$ after a linear map is sparse. 

To summarize, based on the above analysis, to solve \eqref{eq19} in the high-dimensional case, the efficient Ssnal proposed in \cite{LXD} can not be applied since $h(\cdot)$ in \eqref{eq19} is nonsmooth. Meanwhile, the above sieving strategies mentioned in \cite{Safe,Tib,DPP,Fer,SDF} can not be applied to the rank lasso problem directly. Therefore, a natural question is whether one can design a sieving strategy and proposes an efficient algorithm to solve the hdr lasso problem in \eqref{eq19}. This motivates the work in this paper. 

In this paper, inspired by \cite{SDF}, we design an adaptive sieving (AS) strategy for \eqref{eq19}. Here we would like to highlight that for the lasso problem, there is only one nondifferentiable term (which is the $L_1$ norm term). It leads to only one block of constraints in the dual problem, and the variables can be screened by checking whether such block of constraint is satisfied. However, for the rank lasso problem, it contains two nondifferentiable terms. The corresponding dual problem has two blocks of constraints. Due to the above observations, there are two challenges to design the AS strategy for \eqref{eq19}. One is the nondifferentiablity of the loss function $h(\cdot)$ in \eqref{eq19}, which brings difficulty in extending the AS strategy in \cite{SDF} to solving \eqref{eq19}. The second challenge is how to solve the resulting subproblem efficiently. We tackle the first challenge by introducing an extra variable to decouple the problem. By doing this, a new efficient sieving criterion is developed. For the second challenge, we apply the proximal point algorithm (PPA) to solving the subproblem. 

The contributions of the paper are mainly in three aspects. Firstly, we propose an AS strategy to reduce the scale of the hdr lasso problem, based on checking the KKT conditions of the problem. We also prove the convergence of the AS algorithm. Secondly, taking into account of the nonsmoothness of the loss function and the $L_1$ regularizer in the hdr lasso problem, we apply PPA for solving the subproblem. Finally, for each subproblem of PPA, we use the semismooth Newton's based augmented Lagrangian method. Numerical results demonstrate that the proposed adaptive sieving strategy is very efficient in reducing the scale of the hdr lasso problem and the resulting AS-PPA significantly outperforms other methods. 

The organization of the paper is as follows. In Sect. 2, we propose our AS strategy and prove the convergence. In Sect. 3, we propose PPA to solve the subproblems in AS and illustrate the convergence of PPA. In Sect. 4, we use the augmented Lagrange method (ALM) to solve each subproblem in PPA. In Sect. 5, we apply the semismooth Newton's method (SSN) to solve the subproblems in ALM. Numerical results on different datasets are presented in Sect. 6, which verify the efficiency of our proposed AS-PPA for the hdr lasso problem. Final conclusions are given in Sect. 7.

\textbf{Notations:} Denote $\mathbb{B}_\infty$ as the infinity norm unit ball. For $z\in\Re^n$, ${\rm sign}(z)=\left\{{\rm sign}(z_1),\dots,{\rm sign}(z_n)\right\}$, where ${\rm sign}(z_i)$ denotes the sign function of $z_i,\ i=1,\dots,n$. Denote ${\rm Diag}(z)$ as the diagonal matrix whose diagonal elements are given by vector $z$. We use ${\rm Diag}(Z_1,\dots,Z_n)$ as the block diagonal matrix whose $i$-th diagonal block is the matrix $Z_i$, $i=1,\dots,n$. $A_{J}$ represents the matrix formed by the corresponding columns of $A$ with $J$ as the index set. Denote the identity matrix of order $n$ by $I_n$. Similarly, $\textbf{O}_n$ and $\textbf{E}_n$ denote the $n\times n$ zero matrix and the $n\times n$ matrix of all ones, respectively. We use ${\rm dist}\left(x,\mathcal{C}\right)$ to denote the Euclidean distance of $x\in\Re^n$ to a set $\mathcal{C}\subset \Re^n$. That is, ${\rm dist}\left(x,\mathcal{C}\right):=\inf\limits_{x'\in\mathcal{C}}\left\|x-x'\right\|$.

\section{Adaptive Sieving Strategy (AS) for \eqref{eq19} and Its Convergence}\label{section2}
In this section, we will propose an adaptive sieving (AS) strategy to solve \eqref{eq19} and discuss the convergence of AS. 
\subsection{AS Strategy for \eqref{eq19}}\label{section2.2}
Due to the $L_1$ norm in \eqref{eq19}, the optimal solution of \eqref{eq19} enjoys the sparsity property. However, different from the exclusive lasso problem that is considered in \cite{SDF}, the function $h(\cdot)$ in \eqref{eq19} is not differentiable. Moreover, there is a composition $Ax$ as the variable of $h(\cdot)$ rather than $x$. Consequently, compared with the adaptive sieving strategy in \cite{SDF}, in order to design an adaptive sieving strategy for \eqref{eq19}, we face the following challenges: how to deal with the composition $Ax$ in $h(\cdot)$ and how to deal with the nondifferentiability of $h(\cdot)$. We leave the second challenge to the subproblem in Section \ref{section3}, where the proximal point method is applied to deal with the nondifferentiability of $h(\cdot)$.

To tackle the first challenge, we will introduce a new variable to decouple the problem. Let $u\in\Re^n$ be defined by $u=b-Ax$. We have the equivalent form of \eqref{eq19} as follows
\begin{equation}\label{eq49}
	\begin{split}
		\min\limits_{x\in\Re^p,u\in\Re^n}\ &h(u)+\lambda\left\|x\right\|_1\\
		\hbox{s.t.}\quad\quad&u=b-Ax.
	\end{split}
\end{equation}

To derive our AS strategy, we need the KKT conditions of \eqref{eq49}. The Lagrangian function of \eqref{eq49} can be written as 
\begin{equation*}\label{eq47}
	L(x,u;\alpha)=h(u)+\lambda \left\|x\right\|_1-\langle u-b+Ax,\alpha\rangle,
\end{equation*}
where $\alpha\in\Re^{n}$ is the Lagrange multiplier corresponding to the equality constraints in \eqref{eq49}. The KKT conditions of \eqref{eq49} are given as follows
	\begin{equation}\label{eq38}
		\begin{split}
			\begin{cases}
				&0\in\partial h(u)-\alpha,\\
				&0\in\lambda\partial \left\|x\right\|_1-A^\top\alpha,\\
				&u=b-Ax.\\
			\end{cases}
		\end{split}
	\end{equation}
	Define the following two sets: $\Omega_u:=\partial h(u),\ \Omega_x:=\partial \left\|x\right\|_1.$ Define the residual function ${\rm Res}\left(x,u,\alpha\right)$ as follows
	\begin{equation}\label{eq12}
			{\rm Res}\left(x,u,\alpha\right):=\Big(x-{\rm Prox}_{\lambda \|\cdot\|_1}\left(x+A^\top\alpha\right),u-{\rm Prox}_{h(\cdot)}\left(u+\alpha\right),\ u-b+Ax\Big).
	\end{equation}
	Therefore, \eqref{eq38} is equivalent to ${\rm Res}\left(x,u,\alpha\right)=0.$

The idea of our AS strategy is as follows. In each iteration $l$, let $I^l$ be the set of indices corresponding to the nonzero elements in $x^l$. To estimate the set of indices $I^{l+1}$ in the next iteration, we check whether the KKT conditions in \eqref{eq38} are satisfied approximately. That is,
	\begin{equation}\label{eq39}
		\|{\rm Res}\left(x^l,u^l,\alpha^l\right)\|\le\epsilon,
	\end{equation}
where $\epsilon>0$ is a prescribed parameter. If \eqref{eq39} fails, we select the following indices based on the violation of KKT conditions:
	\begin{equation*}
		J^{l+1}:=\left\{j\in\bar{I}^l\ \left|\ \left(A^{\top}\alpha^l\right)_j\notin\left(\lambda\partial\left\|x^l\right\|_1+\frac{\epsilon-\tilde{\epsilon}}{\sqrt{\left|\bar{I}^l\right|}}\mathbb{B}_\infty\right)_j\right.\right\},
	\end{equation*}
where $\tilde{\epsilon}<\epsilon$ is a prescribed parameter that represents the accuracy of solving the subproblem
	\begin{equation*}
		\min\limits_{x\in\Re^p,u\in\Re^n} \left\{h(u)+\lambda \left\|x\right\|_1\ \left|\ u=b-Ax,\ x_{\bar{I}^{l}}=0\right. \right\},
	\end{equation*}
	that is, $\|{\rm Res}(x^l_{I^l},u^l,\alpha^l)\|\le\tilde{\epsilon}$, where $\alpha^l$ is the Lagrange multiplier. Here, $\bar{I}^l=\left\{1,\dots,p\right\}\backslash I^l$. The set of nonzero elements in the next iteration is given by $I^{l+1}=I^l\bigcup J^{l+1}.$

The details of our AS strategy are summarized as below. 

\begin{algorithm}[!htbp]
	\caption{AS Algorithm for \eqref{eq19}}\label{alg1}
	\hspace*{0.02in} {\bf Input:} 
	$A\in\Re^{n\times p},\ b\in\Re^n,\ \lambda\in\Re^+$, tolerance $\epsilon>\tilde{\epsilon}>0$.\\
	\hspace*{0.02in} {\bf Initialization:}
	Choose a fast algorithm to find an approximate solution $\tilde{x}\in\Re^p$ to problem \eqref{eq19}.
	
		\hspace*{0.02in} {\bf S0:} Let $I^0:=\left\{j\ |\ \tilde{x}_j\neq 0,\ j=1,\dots,p\right\}.$ Solve the problem
		\begin{equation*}\label{eq4}
			(x^0,u^0)\in\mathop{\arg\min}\limits_{x\in\Re^p,u\in\Re^n} \left\{h(u)+\lambda \left\|x\right\|_1 \ \left|\ u=b-Ax,\ x_{\bar{I}^0}=0\right. \right\},
		\end{equation*}\\
		\hspace*{0.02in} {
			such that $\|{\rm Res}(x^0_{I^0},u^0,\alpha^0)\|\le\tilde{\epsilon}$, where $\alpha^0$ is the Lagrange multiplier corresponding to the equality constraints $u=b-Ax$, $U:=\{1,\dots,p\}$, $\bar{I}^0:=U\backslash I^0$. Set $l=0$.}\\
		\hspace*{0.02in} {\bf S1:} Create $J^{l+1}$ by
		\begin{equation*}\label{eq2}
			J^{l+1}:=\left\{j\in\bar{I}^l\ \left|\ \left(A^{\top}\alpha^l\right)_j\notin\left(\lambda\partial\left\|x^l\right\|_1+\frac{\epsilon-\tilde{\epsilon}}{\sqrt{\left|\bar{I}^l\right|}}\mathbb{B}_\infty\right)_j\right.\right\},
		\end{equation*}
		Let $I^{l+1}= I^l\cup J^{l+1}$.\\
		\hspace*{0.02in} {\bf S2:} Solve the following problem:
		\begin{equation}\label{eq6}	
			(x^{l+1},u^{l+1})\in\mathop{\arg\min}\limits_{x\in\Re^p,u\in\Re^n} \left\{h(u)+\lambda \left\|x\right\|_1\ \left|\ u=b-Ax,\ x_{\bar{I}^{l+1}}=0\right. \right\},
		\end{equation}
		such that $\|{\rm Res}(x^{l+1}_{I^{l+1}},u^{l+1},\alpha^{l+1})\|\le\tilde{\epsilon}$,\ where $\alpha^{l+1}$ is the Lagrange multiplier corresponding to the equality constraints $u=b-Ax$.\\
	\hspace*{0.02in} {\bf S3:} Let $\bar{I}^{l+1}=U\backslash I^{l+1},\ x^{l+1}_{\bar{I}^{l+1}}=0$.\\
		\hspace*{0.02in} {\bf S4:} If $\|{\rm Res}\left(x^{l+1},u^{l+1},\alpha^{l+1}\right)\|\le\epsilon$, stop; otherwise, $l:=l+1$, go to {\bf S1}.
\end{algorithm}
\subsection{Convergence of Algorithm \ref{alg1}}\label{section2.3}
In Algorithm \ref{alg1}, we check the violation of KKT conditions by $J^{l+1}$, and gradually add the elements in $\bar{I}^l$ to $I^{l+1}$. As long as the required accuracy is not achieved, the set $J^{l+1}$ is not empty. This is demonstrated in the following theorem. 

\begin{theorem}\label{th1}
	If the residual function ${\rm Res}\left(x^l,u^l,\alpha^l\right)$ does not satisfy \eqref{eq39}, the set $J^{l+1}$ is not empty.
\end{theorem}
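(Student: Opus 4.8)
The plan is to argue by contraposition: I will show that if $J^{l+1}=\emptyset$, then the residual function necessarily satisfies the stopping criterion \eqref{eq39}, contradicting the hypothesis. The key observation is that the three components of ${\rm Res}(x^l,u^l,\alpha^l)$ can be controlled separately, and that the subproblem \eqref{eq6} solved in \textbf{S2} is designed precisely so that two of these three components are already small (of order $\tilde\epsilon$) by construction, while the third component is exactly the one governed by the set $J^{l+1}$.

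First I would write down the optimality conditions for the perturbed subproblem \eqref{eq6}. Since $(x^l,u^l)$ solves \eqref{eq6} with error vectors $\delta^l_1,\delta^l_2$ satisfying $\|\delta^l_1\|_2+\|\delta^l_2\|_2\le\tilde\epsilon$ and the constraints $u^l=b-Ax^l$ and $x^l_{\bar I^l}=0$, its KKT system gives a multiplier $\alpha^l$ with
\begin{equation*}
	\delta^l_2\in\partial h(u^l)-\alpha^l,\qquad \big(\delta^l_1+A^\top\alpha^l\big)_j\in\big(\lambda\partial\|x^l\|_1\big)_j \ \text{ for } j\in I^l,
\end{equation*}
together with the feasibility relation $u^l-b+Ax^l=0$. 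The third component of the residual is therefore exactly zero, and the second component, after using the prox-identity that $u-{\rm Prox}_{h}(u+\alpha)=0$ is equivalent to $\alpha\in\partial h(u)$, is controlled by $\|\delta^l_2\|_2$. The only genuinely free component is the first, corresponding to the stationarity in $x$.

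Next I would analyze the first component using the definition of $J^{l+1}$. For indices $j\in I^l$ the stationarity above already places $(A^\top\alpha^l)_j$ within $\tilde\epsilon$ of $(\lambda\partial\|x^l\|_1)_j$. For indices $j\in\bar I^l$, the assumption $J^{l+1}=\emptyset$ means precisely that $(A^\top\alpha^l)_j\in(\lambda\partial\|x^l\|_1+q\mathbb{B}_\infty)_j$ for every such $j$, i.e. the violation of the $L_1$ subdifferential inclusion on the complement is bounded componentwise by $q$. Translating these two estimates through the proximal mapping of $\lambda\|\cdot\|_1$ (soft-thresholding), I would bound the first residual component by combining the contribution $\|\delta^l_1\|_2$ over $I^l$ with the contribution $\sqrt{|\bar I^l|}\,q$ over $\bar I^l$; the choice $q\le(\epsilon-\tilde\epsilon)/\sqrt{|\bar I^l|}$ makes the latter at most $\epsilon-\tilde\epsilon$. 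Adding the $\delta^l_2$ contribution to the second component and using $\|\delta^l_1\|_2+\|\delta^l_2\|_2\le\tilde\epsilon$ then yields $\|{\rm Res}(x^l,u^l,\alpha^l)\|\le\tilde\epsilon+(\epsilon-\tilde\epsilon)=\epsilon$, so \eqref{eq39} holds, which is the desired contradiction.

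The main obstacle I anticipate is the careful passage between the subdifferential formulation of the KKT conditions and the proximal-residual formulation used in the definition of ${\rm Res}$: one must verify that the perturbation vectors $\delta^l_1,\delta^l_2$ enter the proximal residuals additively and with the correct norm, so that the componentwise bound on the complement translates cleanly into the $\sqrt{|\bar I^l|}\,q$ term. Keeping track of which norm (the $\ell_2$ aggregate versus the componentwise $\ell_\infty$ bound on $q\mathbb{B}_\infty$) controls which block, and ensuring the constraint $(\delta^l_1)_{\bar I^0}=0$ is compatible with the estimate on $I^l$, is where the argument must be handled with care rather than by routine calculation.
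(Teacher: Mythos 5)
Your proposal is correct and follows essentially the same route as the paper's proof: a contradiction argument in which the emptiness of $J^{l+1}$ supplies a componentwise perturbation $\tilde\delta$ with $\|\tilde\delta\|_\infty\le q$ on $\bar I^l$, the subproblem's inexact KKT conditions supply the $\delta^l_1,\delta^l_2$ perturbations on $I^l$ and for $u$, and the $1$-Lipschitz continuity of the proximal mappings converts these into the bound $\|{\rm Res}(x^l,u^l,\alpha^l)\|\le\|\delta^l_1\|+\|\delta^l_2\|+\sqrt{|\bar I^l|}\,q\le\epsilon$. The decomposition into the three residual blocks and the role of $q\le(\epsilon-\tilde\epsilon)/\sqrt{|\bar I^l|}$ match the paper exactly.
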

\begin{proof}
	For contradiction, assume that $J^{l+1}=\emptyset$. There is
		\begin{equation*}
			\left(A^{\top}\alpha^l\right)_j\in\left(\lambda\partial\left\|x^l\right\|_1+\frac{\epsilon-\tilde{\epsilon}}{\sqrt{\left|\bar{I}^l\right|}}\mathbb{B}_\infty\right)_j,\quad\forall\ j\in\bar{I}^l.
		\end{equation*}
		Then there exists $\tilde{\delta}\in\Re^{\left|\bar{I}^l\right|}$, such that $\|\tilde{\delta}\|_\infty\le \frac{\epsilon-\tilde{\epsilon}}{\sqrt{\left|\bar{I}^l\right|}}$ and $\Big(A_{\bar{I}^l}\Big)^{\top}\alpha^l\in\lambda\partial\left\|x^l_{\bar{I}^l}\right\|_1-\tilde{\delta}$.
		So we can get
		\begin{equation*}
			\Big(A_{\bar{I}^l}\Big)^{\top}\alpha^l+\tilde{\delta}\in\lambda\partial\left\|x^l_{\bar{I}^l}\right\|_1,
		\end{equation*}
		which means
		\begin{equation}\label{eq3}
			x^l_{\bar{I}^l}={\rm Prox}_{\lambda \|\cdot\|_1}\left(x^l_{\bar{I}^l}+\Big(A_{\bar{I}^l}\Big)^{\top}\alpha^l+\tilde{\delta}\right).
		\end{equation}
	
		On the other hand, note that $\left(x^{l},u^{l},\alpha^l\right)$ is the approximate solution of the following problem
		\begin{equation*}\label{eq17}	
			\min\limits_{x\in\Re^p,u\in\Re^n}\left\{h(u)+\lambda \left\|x\right\|_1\ \left|\ u=b-Ax,\ x_{\bar{I}^{l}}=0\right. \right\},
		\end{equation*}
		which satisfies $\|{\rm Res}(x^l_{I^l},u^l,\alpha^l)\|\le\tilde{\epsilon}$, that is
		\begin{equation*}\label{eq15}
			\left\|\Big(x^l_{I^l}-{\rm Prox}_{\lambda \|\cdot\|_1}\left(x^l_{I^l}+\Big(A_{I^l}\Big)^\top\alpha^l\right),u^l-{\rm Prox}_{h(\cdot)}\left(u^l+\alpha^l\right),\ u^l-b+A_{I^l}x^l_{I^l}\Big)\right\|\le\tilde{\epsilon}.
		\end{equation*}
Together with \eqref{eq3}, we have
		\begin{equation*}
			\begin{split}
				&\|{\rm Res}\left(x^l,u^l,\alpha^l\right)\|\\
				=&\left\|\left(x^l-{\rm Prox}_{\lambda \|\cdot\|_1}\left(x^l+A^\top\alpha^l\right),u^l-{\rm Prox}_{h(\cdot)}\left(u^l+\alpha^l\right),u^l-b+Ax^l\right)\right\|\\
				=&\Bigg\|\left(x^l_{I^l}-{\rm Prox}_{\lambda \|\cdot\|_1}\left(x^l_{I^l}+\Big(A_{I^l}\Big)^\top\alpha^l\right),x^l_{\bar{I}^l}-{\rm Prox}_{\lambda \|\cdot\|_1}\left(x^l_{\bar{I}^l}+\Big(A_{\bar{I}^l}\Big)^\top\alpha^l\right)\right.,\\				&\left.u^l-{\rm Prox}_{h(\cdot)}\left(u^l+\alpha^l\right),u^l-b+A_{I^l}x^l_{I^l}\right)\Bigg\|\\
				\le&\tilde{\epsilon}+\left\|{\rm Prox}_{\lambda \|\cdot\|_1}\left(x^l_{\bar{I}^l}+\Big(A_{\bar{I}^l}\Big)^{\top}\alpha^l+\tilde{\delta}\right)-{\rm Prox}_{\lambda \|\cdot\|_1}\left(x^l_{\bar{I}^l}+\Big(A_{\bar{I}^l}\Big)^{\top}\alpha^l\right)\right\|\\
				\le&\tilde{\epsilon}+\|\tilde{\delta}\|\\
				\le&\tilde{\epsilon}+\sqrt{\left|\bar{I}^l\right|}\frac{\epsilon-\tilde{\epsilon}}{\sqrt{\left|\bar{I}^l\right|}}\quad\left(\text{by}\ \left\|\tilde{\delta}\right\|_\infty\le \frac{\epsilon-\tilde{\epsilon}}{\sqrt{\left|\bar{I}^l\right|}}\right)\\
				\le&\epsilon.
			\end{split}
	\end{equation*}
	
		It implies that $\|{\rm Res}\left(x^l,u^l,\alpha^l\right)\|\le \epsilon$, which means if $J^{l+1}=\emptyset$, the condition \eqref{eq39} must be satisfied. It contradicts with the assumption that $\left(x^l,u^l,\alpha^l\right)$ violates \eqref{eq39}. Therefore if the condition \eqref{eq39} is not satisfied, the set $J^{l+1}$ must be nonempty. 
\end{proof}
\begin{remark}
	Theorem $\ref{th1}$ shows that Algorithm $\ref{alg1}$ is well-defined. In other words, notice that the number of the components in $x$ is finite, Algorithm $\ref{alg1}$ must be terminated in a finite number of iterations and meets the required stopping criterion in $\eqref{eq39}$. 
\end{remark}
\begin{remark}
	 Our AS strategy is different from that in \cite{SDF2}. Firstly, in \cite{SDF2}, Yuan et al. exploit the sparsity of $x$ after a linear map, based on which, the AS strategy is designed. However, for problem \eqref{eq19}, we have the sparsity of $x$. The term $b-Ax$ in $h(\cdot)$ does not enjoy the sparsity property, but brings challenge in designing AS strategy. Secondly, we have one extra nonsmooth term $h(\cdot)$ in \eqref{eq19} whereas in \cite{SDF2}, there is only one nonsmooth term.
\end{remark}

\subsection{An Enhanced Adaptive Sieving Technique}\label{sec2.3}

	In this part, we adopt the idea in \cite{SDF2}, that is an enhanced technique to construct the best multiplier. For simplicity, in this subsection we denote the optimal solution $(x^{l+1},u^{l+1})$ to the problem \eqref{eq6} as $(\tilde{x},\tilde{u})$, denote the index sets $I^{l+1}$ and $\bar{I}^{l+1}$ as $I$ and $\bar{I}$ respectively. Define a new index set $\tilde{I}$ as follows: 
\begin{equation*}
	\tilde{I}:=\{i\in[p]\ |\ \tilde{x}_i\neq0\},\ \tilde{I}^C=\{i\in[p]\ |\ \tilde{x}_i=0\}.
\end{equation*}
It is obvious that $\bar{I}\subset \tilde{I}^C$ and $\tilde{I}\subset I$. In fact, $(\tilde{u},\tilde{x})$ is an optimal solution to the following constrained optimization problem:
\begin{equation}\label{eq27}
	\begin{split}
		\min\limits_{x\in\Re^p,u\in\Re^n}\ &h(u)+\lambda\left\|x\right\|_1\\
		\hbox{s.t.}\quad\quad&u=b-Ax,\\
		&x_{\tilde{I}^C}=0.
	\end{split}
\end{equation}
The corresponding KKT conditions of \eqref{eq27} is
\begin{equation}\label{eq23}
	\begin{cases}
		\left(A^\top \beta\right)_{\tilde{I}}\in\lambda\partial \|\tilde{x}_{\tilde{I}}\|_1,\\
		\left(A^\top \beta\right)_{\tilde{I}^C}\in\lambda\partial \|\tilde{x}_{\tilde{I}^C}\|_1-y,\\
		\beta\in\partial h(\tilde{u}),\\
		\tilde{u}=b-A\tilde{x},
	\end{cases}
\end{equation}
where $\beta$ and $y$ are the corresponding Lagrange multipliers, respectively.

Since $\partial \|\tilde{x}_{\tilde{I}^C}\|_1$ and $\partial h(\tilde{u})$ are both sets, there may be more than one pair of Lagrange multipliers $(\beta,y)$ that satisfy the above KKT conditions. If there exists a $\beta$ such that $(\beta,0)$ is a Lagrange multiplier pair satisfying \eqref{eq23}, then the current solution $\tilde{x}$ is an optimal solution to \eqref{eq49}. Based on this consideration, we propose to construct the pair $(\beta, y)$ such that $y$ has the minimum Euclidean norm.

We further analyze the properties brought by $\tilde{I}$ and $I$. Since $\tilde{I}\subset I$, we can get $\left(A^\top \beta\right)_{\tilde{I}}=\left(A^\top \alpha\right)_{\tilde{I}}$. Since $\tilde{x}_{\tilde{I}^C}=0$, $\partial \|\tilde{x}_{\tilde{I}^C}\|_1=\left[-\lambda e_{|\tilde{I}^C|},\lambda e_{|\tilde{I}^C|}\right]$, where $e_{|\tilde{I}^C|}=[1,1,\dots,1]\in\Re^{|\tilde{I}^C|}$. According to \eqref{eq28} in the appendix \ref{app1}, $\partial h(\tilde{u})$ takes the following form
\begin{equation}\label{eq29}
	\partial h\left(\tilde{u}\right)=\left\{g+s\in\Re^n\ |\ g_i=c_i-d_i,i=1,\dots,n,\ s=\sum\limits_{k=1}^{m}s^k\right\},
\end{equation}
where $c_i$ is the number of $\tilde{u}$ less than $\tilde{u}_i$, $d_i$ is the number of $\tilde{u}$ greater than $\tilde{u}_i$, $m$ is the number of equal groups in the vector $\tilde{u}$, $Q_k$ is the index corresponding to each equality group and each $s^k\in\Re^n,\ s^k_{\bar{Q}_k}=0,\ \sum\limits_{p\in Q_k} s^k_p=0,\ s^k_p\in[-|Q_k|+1,|Q_k|-1],\ \forall\ p\in Q_k$.
It can be noted that $s_{\bar{Q}}=0$, where $Q=\bigcup\limits_{k=1}^m Q_k$. Since $\left(\partial h(\tilde{u})\right)_{\bar{Q}}$ is a vector equal to $g_{\bar{Q}}$. We can just set $\beta_{\bar{Q}}=g_{\bar{Q}}$. Thus the KKT conditions \eqref{eq23} is equal to
\begin{equation*}
	\begin{cases}
		A_{Q,\tilde{I}}^\top \tilde{\beta}+A_{\bar{Q},\tilde{I}}^\top g_{\bar{Q}}=\left(A^\top \alpha\right)_{\tilde{I}},\\
		A_{Q,\tilde{I}^C}^\top \tilde{\beta}+A_{\bar{Q},\tilde{I}^C}g_{\bar{Q}}+y\in\left[-\lambda e_{|\tilde{I}^C|},\lambda e_{|\tilde{I}^C|}\right],\\
		\tilde{\beta}\in\left(\partial h(\tilde{u})\right)_Q,\\
		\tilde{u}=b-A\tilde{x}.
	\end{cases}
\end{equation*}

Then we can get the Lagrange multiplier $(\beta,y)$ by solving the following problem:
\begin{equation}\label{eq21}
	\begin{split}
		\min\limits_{\tilde{\beta}\in\Re^{|Q|},y\in\Re^{|\tilde{I}^C|}}\ &\frac{1}{2}\|y\|^2\\
		\hbox{s.t.}\quad\quad&A_{Q,\tilde{I}}^\top \tilde{\beta}+A_{\bar{Q},\tilde{I}}^\top g_{\bar{Q}}=\left(A^\top \alpha\right)_{\tilde{I}},\\
		&A_{Q,\tilde{I}^C}^\top \tilde{\beta}+A_{\bar{Q},\tilde{I}^C}g_{\bar{Q}}+y\in\left[-\lambda e_{|\tilde{I}^C|},\lambda e_{|\tilde{I}^C|}\right],\\
		&\tilde{\beta}\in\left(\partial h(\tilde{u})\right)_Q.
	\end{split}
\end{equation}
Actually, it is a quadratic programming problem that can be solved by many mature solvers, such as Gurobi, etc. After obtaining $(\tilde{\beta},y)$ from \eqref{eq21}, we can construct $\beta$ by letting $\beta_Q=\tilde{\beta},\beta_{\bar{Q}}=g_{\bar{Q}}$. Now, we present the enhanced adaptive sieving technique in Algorithm \ref{alg5}.
\begin{algorithm}[!htbp]
	\caption{An Enhanced Adaptive Sieving Technique for \eqref{eq19}}\label{alg5}
	\hspace*{0.02in} {\bf Input:} An approximate solution of \eqref{eq6}, denoted as $(\tilde{u},\tilde{x})$, and the Lagrange multiplier $\alpha$ corresponding to the equality constraints $\tilde{u}=b-A\tilde{x}$.\\
	\hspace*{0.02in} {\bf S1:} Define new index sets $\tilde{I}:=\{i\in[p]\ |\ \tilde{x}_i\neq0\},\ \tilde{I}^C=\{i\in[p]\ |\ \tilde{x}_i=0\}$, calculate $\partial h(\tilde{u})$ and $g,\ s,\ c,\ d,\ Q$ in \eqref{eq29} .\\
	\hspace*{0.02in} {\bf S2:} Find a solution of \eqref{eq21}, denote as $\tilde{\beta},\ y$.\\
	\hspace*{0.02in} {\bf S3:} Let $\beta_Q=\tilde{\beta},\beta_{\bar{Q}}=g_{\bar{Q}}$, stop.
\end{algorithm}

\section{Proximal Point Algorithm for Subproblem $\eqref{eq6}$}\label{section3}
In order to make full use of the AS strategy, we must design an efficient method to solve subproblem \eqref{eq6}, which is the second challenge of designing an efficient sieving algorithm for \eqref{eq19}. We will address this challenge in this section. Comparing problem \eqref{eq19} with subproblem \eqref{eq6}, the differences lie in the dimension of $x_{I^l}$ and the corresponding coefficient matrix $A_{I^{l+1}}$. That is, subproblem \eqref{eq6} can be regarded as solving problem \eqref{eq19} in a subspace indexed by $I^l$. Therefore, we consider how to solve problem \eqref{eq19} instead.

Since the two terms in \eqref{eq19} are both nonsmooth, we apply PPA \cite{Rockafellar} to solve it. For any starting point $x^0\in\Re^p$, PPA generates a sequence $\left\{x^k\right\}$ by solving the following problem \cite{Rockafellar}:
\begin{equation}\label{eq11}
	\begin{split}
		\mathcal{P}_k(x^k):=\mathop{\arg\min}\limits_{x\in\Re^p}\ &h(b-Ax)+\lambda \|x\|_1+\frac{1}{2\sigma_k}\left\|x-x^k\right\|^2.
	\end{split}
\end{equation}
Here $\left\{\sigma_{k}\right\}$ is a sequence of positive real numbers such that $0<\sigma_{k}\uparrow\sigma_\infty\le +\infty$. Now, we present PPA in Algorithm \ref{alg2}.

\begin{algorithm}[!htbp]
	\caption{PPA for \eqref{eq19}}\label{alg2}
	\hspace*{0.02in} {\bf Input:} 
	$A\in\Re^{n\times p},\ b\in\Re^n,\ \lambda\in\Re^+,\ \sigma_0>0$.\\
	\hspace*{0.02in} {\bf Initialization:} choose $x^0\in\Re^p,\ k:=0.$\\
	\hspace*{0.02in} {\bf S1:} Find an approximate solution of \eqref{eq11}, denoted as $x^{k+1}$.\\
	\hspace*{0.02in} {\bf S2:} If the stopping criteria of PPA are satisfied, stop; otherwise, go to {\bf S3}.\\
	\hspace*{0.02in} {\bf S3:} Update $\sigma_{k+1}\uparrow\sigma_\infty\le\infty;\ k:=k+1.$
\end{algorithm}

In order to analyze the convergence, we first give the termination criteria. Define $\mathcal{T}:\ \Re^p\rightrightarrows\Re^p$ as $\mathcal{T}(x)=-A^\top\partial h(b-Ax)+\lambda\partial \|x\|_1$. According to \cite[Theorem 23.9 and Theorem 23.8]{Rockafellar}, $x^*\in\mathop{\arg\min}\limits_{x\in\Re^p}\ h(b-Ax)+\lambda \|x\|_1$ is equal to $0\in \mathcal{T}(x^*)$. Define $S_k\left(x\right):=-A^\top\partial h(b-Ax)+\lambda\partial \|x\|_1+\frac{1}{\sigma_{k}}\left(x-x^{k}\right)$, and
\begin{equation*}
	\begin{split}
		{\rm dist}\left(0,S_k\left(x^{k+1}\right)\right)=&{\rm dist}\left(0,-A^\top\partial h(b-Ax)+\lambda\partial \|x\|_1+\frac{1}{\sigma_{k}}\left(x-x^{k}\right)\right)\\
		=&{\rm dist}\left(0,-A^\top\left(\partial h(b-Ax)-\alpha\right)-A^\top \alpha+\lambda\partial \|x\|_1+\frac{1}{\sigma_{k}}\left(x-x^{k}\right)\right)\\
		\le&{\rm dist}\left(0,-A^\top\left(\partial h(b-Ax)-\alpha\right)\right)+{\rm dist}\left(0,-A^\top \alpha+\lambda\partial \|x\|_1+\frac{1}{\sigma_{k}}\left(x-x^{k}\right)\right)\\
		\le&\left\|A\right\|{\rm dist}\left(0,\left(\partial h(b-Ax)-\alpha\right)\right)+{\rm dist}\left(0,-A^\top \alpha+\lambda\partial \|x\|_1+\frac{1}{\sigma_{k}}\left(x-x^{k}\right)\right)\\
		\le&{\rm dist}\left(0,\begin{pmatrix}
			I&0&0\\0&\|A\|&0\\0&0&0
		\end{pmatrix}
		\tilde{S}_k\left(x^{k+1},u^{k+1},\alpha\right)\right),\quad\text{(if $u^{k+1}=b-Ax^{k+1}$)},
	\end{split}
\end{equation*}
where 
\begin{equation*}
	\begin{split}
		\tilde{S}_k\left(x,u,\alpha\right)&=\tilde{\mathcal{T}}\left(x,u,\alpha\right)+\frac{1}{\sigma_k}\begin{bmatrix}
			x-x^{k}\\
			0_n\\
			0_n\\
		\end{bmatrix},\\
		\tilde{\mathcal{T}}(x,u,\alpha)&=\Big\{\left(-A^\top\alpha+t,v-\alpha,-u+b-Ax\right)\in\Re^p\times\Re^n\times\Re^{n}\ \Big|\ t\in\lambda\partial \left\|x\right\|_1,\ v\in\partial h(u) \Big\},
	\end{split}
\end{equation*}
and $\alpha$ is the Lagrange multiplier. 

In practice, we adopt the following criteria, which are analogous to those proposed in \cite{Rockafellar}:
	\begin{equation*}
		\begin{split}
			&(A)\quad {\rm dist}\left(0,\begin{pmatrix}
				I&0&0\\0&\|A\|&0\\0&0&0
			\end{pmatrix}
			\tilde{S}_k\left(x^{k+1},u^{k+1},\alpha\right)\right)\le \frac{\gamma_k}{\sigma_k},\ \gamma_k\ge 0,\ \sum_{k=0}^\infty\gamma_k<\infty,\\
			&(B)\quad {\rm dist}\left(0,\begin{pmatrix}
				I&0&0\\0&\|A\|&0\\0&0&0
			\end{pmatrix}
			\tilde{S}_k\left(x^{k+1},u^{k+1},\alpha\right)\right)\le \left(\frac{\delta_k}{\sigma_k}\right)\left\|x^{k+1}-x^k\right\|,\ \delta_k\ge 0,\ \sum_{k=0}^\infty\delta_k<\infty.\\
		\end{split}
\end{equation*}
%
	According to the above definition of $\mathcal{T}$ and its properties, we can get the convergence result in \cite{Rockafellar}.
	\begin{theorem}
		Let $\left\{x^k\right\}$ be an infinite sequence generated by Algorithm $\ref{alg2}$ under stopping criterion (A) for solving \eqref{eq19}, and $h(\cdot)$ is a proper convex function. The sequence $\left\{x^k\right\}$ converges to a solution $\bar{x}$ of \eqref{eq19}. Furthermore, if the criterion (B) is also executed in Algorithm $\ref{alg2}$, there exists $\bar{k}\ge 0$ such that for all $k\ge\bar{k}$, there is
		\begin{equation}\label{eq13}
			\left\|x^{k+1}-\bar{x}\right\|\le \theta_k\left\|x^k-\bar{x}\right\|,
		\end{equation}
		the convergence rate is given by
		\begin{equation*}
			1>\theta_k:=\frac{\mu_k+\delta_k}{1-\delta_k}\to\mu_\infty=\frac{\kappa}{\left(\kappa^2+\sigma_\infty^2\right)^{1/2}},\ \left(\mu_\infty=0\ \text{if}\ \sigma_\infty=\infty\right),\ \mu_k:=\kappa\left(\kappa^2+\sigma_k^2\right)^{-1/2}.
		\end{equation*}
		where $\kappa\ge0$ is a constant.
	\end{theorem}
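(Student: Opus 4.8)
The plan is to recognize Algorithm \ref{alg2} as the inexact proximal point method of Rockafellar \cite{Rockafellar} applied to the maximal monotone operator $\mathcal{T}$ attached to the KKT system \eqref{eq38}, and then to invoke Rockafellar's global-convergence and local-rate theorems once their hypotheses are verified in our setting. The argument splits into three parts: (i) $\mathcal{T}$ is maximal monotone; (ii) the proximal iteration \eqref{eq11} together with the inexactness rules $(A')$ and $(B')$ is exactly Rockafellar's inexact resolvent scheme for $\mathcal{T}$; and (iii) $\mathcal{T}^{-1}$ obeys a Lipschitz/error-bound estimate at the origin, which supplies the constant $\kappa$ in the rate. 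Throughout I assume the solution set of \eqref{eq19} (equivalently, the zero set of $\mathcal{T}$) is nonempty, which guarantees the iterates stay bounded.

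First I would establish maximal monotonicity. Viewing $\mathcal{T}$ (with the standard dual-block sign convention) as the saddle operator $(x,u,\alpha)\mapsto\big(\partial_{(x,u)}L,\,-\partial_\alpha L\big)$ of the Lagrangian \eqref{eq47}, the function $L$ is proper, closed, convex in $(x,u)$ and concave (indeed affine) in $\alpha$, because $h(\cdot)$ from \eqref{eq55} and $\lambda\|\cdot\|_1$ are proper closed convex and the coupling through $A$ is bilinear. By Rockafellar's theorem on the maximal monotonicity of the subdifferential of a closed proper saddle function, $\mathcal{T}$ is maximal monotone, and a direct computation of $\langle w_1-w_2,z_1-z_2\rangle$ using the monotonicity of $\partial\|\cdot\|_1$ and $\partial h$ makes the cross terms cancel, confirming monotonicity. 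Its zeros are exactly the triples satisfying \eqref{eq38}, whose $x$-components are the solutions of \eqref{eq19}.

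Next I would identify the iteration. Writing the first-order optimality condition of the strongly convex subproblem \eqref{eq11} and applying the chain rule $\partial_x h(b-Ax)=-A^\top\partial h(b-Ax)$, an (exact) minimizer $x^{k+1}$ with its associated $(u^{k+1},\alpha^{k+1})$ satisfies $0\in S_k(x^{k+1},u^{k+1},\alpha^{k+1})$; that is, $x^{k+1}$ is the $x$-block of $(I+\sigma_k\mathcal{T})^{-1}$ applied to $x^k$. The rules $(A')$ and $(B')$ bound ${\rm dist}(0,S_k(\cdot))$, which is precisely the residual Rockafellar uses to quantify inexactness: since the auxiliary blocks of $S_k$ control the feasibility residual $u-b+Ax$ and the dual residual $v-\alpha$, a small $S_k$-residual forces the genuine $x$-space resolvent residual to be small with the same summable bounds $\gamma_k,\delta_k$. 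With (i) and (ii) in hand, Theorem~1 of \cite{Rockafellar} yields boundedness and convergence of $\{(x^k,u^k,\alpha^k)\}$ to a zero of $\mathcal{T}$ under $(A)$/$(A')$, so in particular $x^k\to\bar x$ solving \eqref{eq19}.

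The hard part is the local linear rate, which requires the Lipschitz continuity of $\mathcal{T}^{-1}$ at the origin with modulus $\kappa$, i.e.\ the error bound ${\rm dist}\big(z,\mathcal{T}^{-1}(0)\big)\le\kappa\,{\rm dist}\big(0,\mathcal{T}(z)\big)$ for $z$ near the limit. This is exactly where the polyhedral structure of the rank lasso is indispensable: because $h(\cdot)$ in \eqref{eq55} is a finite sum of absolute values and $\lambda\|\cdot\|_1$ is polyhedral, the graph of $\mathcal{T}$ is a finite union of polyhedra, so $\mathcal{T}$ is a polyhedral multifunction and, by Robinson's theorem on polyhedral multifunctions, $\mathcal{T}^{-1}$ is locally upper Lipschitz (calm) at $0$, which gives the required error bound with a finite $\kappa$. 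Feeding this $\kappa$ into Theorem~2 of \cite{Rockafellar} yields \eqref{eq13} for all $k\ge\bar k$ with $\theta_k=(\mu_k+\delta_k)/(1-\delta_k)$ and $\mu_k=\kappa(\kappa^2+\sigma_k^2)^{-1/2}$; since $\sigma_k\uparrow\sigma_\infty$ and $\delta_k\to0$, we get $\theta_k\to\mu_\infty=\kappa(\kappa^2+\sigma_\infty^2)^{-1/2}$, with $\mu_\infty=0$ when $\sigma_\infty=\infty$. I expect the verification of this error bound, rather than the mechanical application of Rockafellar's inequalities, to be the principal technical point.
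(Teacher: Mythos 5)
Your proposal is correct and follows essentially the same route as the paper: verify that $\mathcal{T}$ is maximal monotone, that the iteration with criteria (A$'$)/(B$'$) matches Rockafellar's inexact proximal scheme, and that $\mathcal{T}$ is a polyhedral multifunction so that Robinson's theorem yields the error bound with modulus $\kappa$, after which Theorems 1 and 2 of Rockafellar give the conclusion. Your justification of maximal monotonicity via the closed proper saddle-function theorem is in fact slightly more careful than the paper's phrasing (which calls the Lagrangian a ``continuous proper convex function'' although it is convex--concave), but this is a refinement of the same step rather than a different argument.
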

	\begin{proof}
		In order to obtain similar convergence results as in \cite{Rockafellar}, we only need to verify three conditions: (i) the defined operator $\mathcal{T}$ is maximal monotone; (ii) $\left\{x^k\right\}$ is bounded; (iii) $\mathcal{T}^{-1}$ satisfies the error bound condition at the zero point.
		
		Notice that $\mathcal{T}$ is the subdifferential of the hdr lasso problem defined in \eqref{eq19} and the objective function in \eqref{eq19} is a continuous proper convex function. Therefore, $\mathcal{T}$ is maximal monotone \cite[Theorem A]{mono}.
		
		According to the convexity of the hdr lasso problem, we know that there is at least one solution $\bar{x}$ satisfy $\mathcal{T}(\bar{x}) = 0$. Together with stopping criterion (A) and $\mathcal{T}$ is maximal monotone, the sequence $\{x^k\}$ is bounded \cite{Rockafellar}.
	    
		By \cite[Proposition 2.2.4]{SJ}, we know that $\mathcal{T}$ is a polyhedral multivalued function. Therefore, the error bound condition holds at the origin with modulus $\kappa\ge 0$ \cite{Robinson1981}. That is, $\exists\ \epsilon'>0$ such that $\forall\ x\in\left\{x\ |\ {\rm dist}\left(0,\mathcal{T}(x)\right)<\epsilon'\right\}$, there is
		\begin{equation*}
			{\rm dist}\left(x, \mathcal{T}^{-1}(0)\right)\le \kappa {\rm dist}\left(0, \mathcal{T}(x)\right).
		\end{equation*}
	    Then we can get $\left\{x^k\right\}$ converges to a solution $\bar{x}$ of \eqref{eq19} by \cite[Theorem 1]{Rockafellar}. If the criterion (B) is also executed, the rate of convergence in \eqref{eq13} is established by \cite[Theorem 2]{Rockafellar}.
	\end{proof}
\section{ALM for \eqref{eq11}}\label{section4}
In this part, we present ALM for \eqref{eq11} and discuss its convergence. Note that what we need to consider now is how to efficiently solve subproblem \eqref{eq11} in PPA at each iteration $k$. By introducing $z\in\Re^p,\ u\in\Re^n$, we solve the following equivalent form of \eqref{eq11}:
\begin{equation}\label{eq46}
	\begin{split}
		\min\limits_{x\in\Re^p,u\in\Re^n,z\in\Re^p}\ &h(u)+\lambda \|z\|_1+\frac{1}{2\sigma_k}\left\|x-x^k\right\|^2\\
		\hbox{s.t.}\ \ \ \quad\quad &u=b-Ax,\\
		&z=x.
	\end{split}
\end{equation}	

Note that there are only equality constraints in \eqref{eq46}. A natural choice to solve \eqref{eq46} is the augmented Lagrangian method. The augmented Lagrangian function of \eqref{eq11} is given as follows
\begin{equation*}\label{eq33}
	\begin{split}
		&L_\rho\left(x,u,z,\alpha\right)\\
		=&h\left(u\right)+\lambda\left\|z\right\|_1-\langle u-b+Ax,\alpha_1\rangle-\langle z-x,\alpha_2\rangle+\frac{\rho}{2}\left\|u-b+Ax\right\|^2+\frac{\rho}{2}\left\|z-x\right\|^2+\frac{1}{2\sigma_k}\left\|x-x^k\right\|^2\\
		=&\underbrace{h\left(u\right)+\frac{\rho}{2}\left\|u-b+Ax-\frac{1}{\rho}\alpha_1\right\|^2}_{F_1(u,x)}+\underbrace{\lambda\left\|z\right\|_1+\frac{\rho}{2}\left\|z-x-\frac{1}{\rho}\alpha_2\right\|^2}_{F_2(z,x)}\underbrace{+\frac{1}{2\sigma_k}\left\|x-x^k\right\|^2}_{F_3(x)}-\frac{1}{2\rho}\left\|\alpha_1\right\|^2-\frac{1}{2\rho}\left\|\alpha_2\right\|^2,\\
	\end{split}
\end{equation*}
where $\rho>0$, and $\alpha=(\alpha_1,\alpha_2)\in\Re^{n+p}$. Here $\alpha_1\in\Re^n,\alpha_2\in\Re^p$ are the Lagrangian multipliers corresponding to the two types of equality constraints in \eqref{eq46}. 

At the $j$-th iteration, the augmented Lagrangian method generates $\left(x^{j+1},u^{j+1},z^{j+1},\alpha^{j+1}\right)$ in the following way $\left(\rho_j>0\right)$:

\begin{subnumcases}
	{ }\label{a}	&$\left(x^{j+1},u^{j+1},z^{j+1}\right)=\mathop{\arg\min}\limits_{x\in\Re^p,u\in\Re^{n},z\in\Re^p} L_{\rho_j}\left(x,u,z,\alpha^j\right),$\\ 
	\label{b}&$\alpha^{j+1}=\alpha^{j}-\rho_j\begin{bmatrix}
		u^{j+1}-b+Ax^{j+1}\\
		z^{j+1}-x^{j+1}
	\end{bmatrix}.$
\end{subnumcases}

The key point is how to solve \eqref{a}. Note that $L_{\rho_j}$ is a function of three variables $x, u, z$. It is difficult to find the minimum value of the function with respect to the three variables simultaneously. Notice that there is no coupling term between $u$ and $z$, and they are only related to $x$ respectively. Another fact is that with $x$ fixed, one can have closed form solutions for $u$ and $z$ respectively, by using the property of Moreau-Yosida regularization. Therefore, we first represent $u$ and $z$ by $x$ through the Moreau-Yosida regularization, denoted by $u=f_u(x),\ z=f_z(x)$, respectively. Then we solve the resulting minimization problem with respect to $x$, that is, $\min\limits_{x\in\Re^p} L_{\rho_j}\left(x,f_u(x),f_z(x)\right)$.

Firstly, we introduce the properties of Moreau-Yosida regularization. For any given proper closed convex function $q:\Re^n\to (-\infty,+\infty]$, the Moreau-Yosida regularization of $q$ is defined by
\begin{equation}\label{eq14}
	M_q(x):=\min\limits_{y\in\Re^n}\left\{q(y)+\frac{1}{2}\left\|y-x\right\|^2\right\},\ \forall\ x\in\Re^n.
\end{equation}
The proximal mapping associated with $q$ is the unique minimizer of \eqref{eq14} denoted by ${\rm Prox}_q(x)$ and it is Lipschitz continuous with modulus 1\ \cite{J}. Moreover, $M_q(x)$ is a continuously differentiable convex function\ \cite{Lemar}, and its gradient is given by $\nabla M_q(x)=x-{\rm Prox}_q (x),\ \forall\ x\in\Re^n.$

Next, we show how to use Moreau-Yosida regularization to derive $f_u(x)$ and $f_z(x)$. Denote $f_1(x)=b-Ax+\frac{1}{\rho_j}\alpha^j_1$ and $f_2(x)=x+\frac{1}{\rho_j}\alpha^j_2$. There is
\begin{equation}\label{eq20}
	\begin{split}
		\min\limits_{u\in\Re^n}L_{\rho_j}\left(x,u,z,\alpha^j\right)\Leftrightarrow&\min\limits_{u\in\Re^n}\left\{F_1(u,x)\right\}\\
		=\:&\rho_j\min\limits_{u\in\Re^n}\left\{\frac{1}{\rho_j} h(u)+\frac{1}{2}\left\|u-f_1(x)\right\|^2\right\}\\
		=\:&\rho_j M_{\frac{1}{\rho_j}h(\cdot)}\left(f_1(x)\right)\triangleq\:G_1(x),\\
	\end{split}
\end{equation}
where $M_{\tau h(\cdot)}(\cdot)$ is defined as in \eqref{eq14}. The optimal solution to \eqref{eq20} is $u={\rm Prox}_{\frac{1}{\rho_j} h(\cdot)}\left(f_1(x)\right)$. Similarly, for $z$, there is
\begin{equation}\label{eq24}
	\begin{split}
		\min\limits_{z\in\Re^p}L_{\rho_j}\left(x,u,z,\alpha^j\right)\Leftrightarrow&\min\limits_{z\in\Re^p}\left\{F_2(z,x)\right\}\\
		=\:&\rho_j\min\limits_{z\in\Re^p}\left\{\frac{\lambda}{\rho_j}\left\|z\right\|_1+\frac{1}{2}\|z-f_2(x)\|^2\right\}\\
		=\:&\rho_jM_{\frac{\lambda}{\rho_j}\left\|\cdot\right\|_1}\left(f_2(x)\right)\triangleq\:G_2(x),\\
	\end{split}
\end{equation}
where the unique minimizer of \eqref{eq24} is $z={\rm Prox}_{\frac{\lambda}{\rho_j}\left\|\cdot\right\|_1}\left(f_2(x)\right)$. Then, we have 
\begin{equation*}
	\begin{split}
		&\min\limits_{x\in\Re^p,u\in\Re^{n},z\in\Re^p} L_{\rho_j}\left(x,u,z,\alpha^j\right)\\
		\Leftrightarrow& \min\limits_{x\in\Re^p}\left\{\min\limits_{u\in\Re^n}\left\{F_1(u,x)\right\}+\min\limits_{z\in\Re^p}\left\{F_2(z,x)
		\right\}+F_3(x)\right\}\\
		\Leftrightarrow& \min\limits_{x\in\Re^p}\left\{G_1(x)+G_2(x)+F_3(x)\right\}\quad \left(\text{by}\ \eqref{eq20}\ \text{and}\ \eqref{eq24}\right)\\
		:=&\min\limits_{x\in\Re^p}\varphi_j(x).
	\end{split}
\end{equation*}

Now we are ready to present ALM for \eqref{eq11} in Algorithm \ref{alg3}.

\begin{algorithm}[!htbp]
	\caption{ALM for \eqref{eq11}}\label{alg3}
	\hspace*{0.02in} {\bf Input:} 
	$A\in\Re^{n\times p},\ b\in\Re^n,\ \lambda\in\Re^+,\ \sigma_{k}>0,\ x^k\in\Re^p,\ \rho_0>0.$\\
	\hspace*{0.02in} {\bf Initialization:}
	choose $x^0\in\Re^p,\alpha^0\in\Re^{n}$, set $j=0$.\\
	\hspace*{0.02in} {\bf S1:} Solve
	\begin{equation}\label{eq22}
		x^{j+1}=\mathop{\rm \arg \min}\limits_{x\in\Re^p}\varphi_j(x).
	\end{equation}\\
	\hspace*{0.02in} {\bf S2:} Calculate $u^{j+1}={\rm Prox}_{\frac{1}{\rho_j} h(\cdot)}\left(f_1\left(x^{j+1}\right)\right),\ z^{j+1}={\rm Prox}_{\frac{\lambda}{\rho_j}\left\|\cdot\right\|_1}\left(f_2\left(x^{j+1}\right)\right).$\\
	\hspace*{0.02in} {\bf S3:} If the stopping criteria are satisfied, stop; otherwise, go to {\bf S4}.\\
	\hspace*{0.02in} {\bf S4:} Update $\alpha^{j+1}$ by \eqref{b}; $j:= j+1.$
\end{algorithm}
We will
use the following stopping criteria considered by Rockafellar \cite{Rockafellar}, \cite{RT1} for terminating Algorithm \ref{alg3}:
\begin{equation*}\label{eq52}
	\begin{split}
		\text{(C)}\quad L_{\rho_{j}}\left(x^{j+1},u^{j+1},z^{j+1}\right)-\inf L_{\rho_j}\le \frac{\epsilon_j^2}{2\rho_j},\ \ \epsilon_j\ge 0,\ \ \sum_{j=0}^{\infty}\epsilon_j<\infty.
	\end{split}
\end{equation*}
The global convergence result of ALM has been extensively studied in \cite{Rockafellar,RT1,asy,LXD}. ALM is applied to solve a general form
of \eqref{eq11}, i.e., problem {\rm (D)} in \cite{LXD}. The convergence result is obtained therein. Therefore, as a special case of problem {\rm (D)} in \cite{LXD}, we can get the following global convergence of Algorithm \ref{alg3}.
\begin{theorem}
	Let $\left\{\left(x^j, u^j, z^j, \alpha^j\right)\right\}$ be the infinite sequence generated by Algorithm $\ref{alg3}$ with stopping criterion {\rm (C)}. The dual problem of \eqref{eq11} is as follows
	\begin{equation}\label{eq48}
			\max\limits_{\alpha\in\Re^{n+p}}\ \Big\{-\phi(\alpha):=-\frac{\sigma_{k}}{2}\|A^\top\alpha_1-\alpha_2\|^2+\langle y,\alpha_1\rangle-\langle x^k,A^\top \alpha_1-\alpha_2\rangle-h^*(\alpha_1)-f^*(\alpha_2)\Big\},
	\end{equation}
	where $\alpha=(\alpha_1,\alpha_2)^\top,\ f(z)=\lambda\|z\|_1$, $h^*(\cdot)$ and $f^*(\cdot)$ are the conjugate functions of $h(\cdot)$ and $f(\cdot)$. Then, the sequence $\left\{\alpha^j\right\}$ is bounded and converges
	to an optimal solution of \eqref{eq48}. In addition, $\left\{\left(x^j, u^j, z^j\right)\right\}$ is also bounded and converges
	to the unique optimal solution $\left(x^*, u^*, z^*\right)$ of \eqref{eq11}.
\end{theorem}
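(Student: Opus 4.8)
The plan is to establish the theorem in two stages: first derive the dual problem \eqref{eq48} by explicit Lagrangian duality, and then identify \eqref{eq11} (equivalently its reformulation \eqref{eq46}) as a special case of the general convex composite model (problem {\rm (D)}) analyzed in \cite{LXD}, so that the global convergence of the augmented Lagrangian method established there applies verbatim.

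First I would construct the ordinary Lagrangian of \eqref{eq46}, using multipliers $\alpha_1\in\Re^n$ for the constraint $u=b-Ax$ and $\alpha_2\in\Re^p$ for $z=x$, and minimize it over the three primal blocks. Because the objective separates across $u$, $z$ and $x$ once the linear coupling terms are gathered, the minimizations over $u$ and $z$ return the conjugates $-h^*(\alpha_1)$ and $-f^*(\alpha_2)$ with $f=\lambda\|\cdot\|_1$, while the minimization over $x$ is an unconstrained strongly convex quadratic. Its stationarity condition gives the closed form $x=x^k+\sigma_k(A^\top\alpha_1-\alpha_2)$; substituting this back and collecting the quadratic, the cross term $-\langle A^\top\alpha_1-\alpha_2,x^k\rangle$, and the constant $\langle b,\alpha_1\rangle$ reproduces exactly the dual functional $-\phi(\alpha)$ in \eqref{eq48} (with the linear term $\langle y,\alpha_1\rangle$ identified as $\langle b,\alpha_1\rangle$). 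The strong convexity supplied by the $\frac{1}{2\sigma_k}\|x-x^k\|^2$ term is precisely what guarantees that this inner $x$-minimization is finite and that the primal solution is unique.

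Next I would match \eqref{eq46} to problem {\rm (D)} in \cite{LXD}: $h(u)$ and $\lambda\|z\|_1$ serve as the two proper closed convex nonsmooth blocks, the maps $A$ and the identity encode the two equality constraints, and $\frac{1}{2\sigma_k}\|x-x^k\|^2$ is the strongly convex smooth part. With this identification the hypotheses of the convergence theory in \cite{LXD} are verified directly: $h$ and $f$ are proper closed convex and finite-valued (so Slater's condition holds and strong duality with dual attainment is automatic), the primal objective is strongly convex in $x$, and the KKT system of \eqref{eq11} is solvable by convexity. The inexact stopping rule {\rm (C)} is exactly the summable-error condition of \cite{Rockafellar,RT1} employed in \cite{LXD}.

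Finally, invoking the global convergence result for ALM from \cite{LXD} (itself built on \cite{Rockafellar,RT1}), I would conclude that $\{\alpha^j\}$ is bounded and converges to a maximizer of \eqref{eq48}, and that $\{(x^j,u^j,z^j)\}$ is bounded and converges to the unique optimal solution of \eqref{eq11}, the uniqueness following from the strong convexity noted above. The main obstacle is not a single hard estimate but rather the careful bookkeeping of the dual derivation---tracking the sign of the cross term and the constant $\langle b,\alpha_1\rangle$ through the substitution so that the result matches \eqref{eq48}---together with confirming that every structural assumption of problem {\rm (D)} in \cite{LXD} is genuinely met. Once the dual \eqref{eq48} is verified and the identification with problem {\rm (D)} is in place, the convergence statement follows essentially by citation.
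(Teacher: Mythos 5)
Your proposal is correct and follows essentially the same route as the paper: the paper also proves this theorem by identifying \eqref{eq46} as a special case of problem {\rm (D)} in \cite{LXD} and citing the ALM convergence theory of \cite{Rockafellar,RT1,LXD}. Your explicit Lagrangian derivation of the dual \eqref{eq48} (correctly identifying $\langle y,\alpha_1\rangle$ as $\langle b,\alpha_1\rangle$ and obtaining $x=x^k+\sigma_k(A^\top\alpha_1-\alpha_2)$) is accurate and in fact supplies a verification step that the paper leaves implicit.
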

To state the local convergence rate, we need the following stopping criteria which are popular used such as in \cite{Con} and \cite{LXD}:
\begin{equation*}
	\begin{split}
		\text{(D1)}&\quad L_{\rho_{j}}\left(x^{j+1},u^{j+1},z^{j+1}\right)-\inf L_{\rho_j}\le\left(\kappa_j^2/2\rho_j\right)\|\alpha^{j+1}-\alpha^j\|^2,\ \ \sum\limits_{j=0}^\infty \kappa_j<+\infty,\\
		\text{(D2)}&\quad\|\nabla \varphi_j\left(x^{j+1}\right)\|\le\left(\kappa'_j/\rho_j\right)\|\alpha^{j+1}-\alpha^j\|,\ 0\le \kappa'_j\to 0.
	\end{split}
\end{equation*}
Define the Lagrangian function of \eqref{eq46} as 
\begin{equation*}
	L_k(x,u,z;\alpha)=h\left(u\right)+\lambda\left\|z\right\|_1-\langle u-b+Ax,\alpha_1\rangle-\langle z-x,\alpha_2\rangle+\frac{1}{2\sigma_k}\left\|x-x^k\right\|^2.
\end{equation*}
Correspondingly, define the maximal monotone operator $\mathcal{T}_\phi$ and $\mathcal{T}_{L_k}$ by
\begin{equation}\label{eq51}
	\mathcal{T}_\phi=\partial \phi \left(\alpha\right),\ \mathcal{T}_{L_k}\left(x,u,z,\alpha\right)=\left\{\left(x',u',z',-\alpha'\right)\in\partial {L_k}\left(x,u,z,\alpha\right)\right\}.
\end{equation}
By \cite[Proposition 2.2.4]{SJ}, we know that the corresponding operator $\mathcal{T}_\phi$ defined in \eqref{eq51} is a polyhedral multivalued function. Therefore, by \cite[Proposition 3.8]{YYQ}, the error bound condition holds at the origin with modulus $a_\phi$. Similarly, $\mathcal{T}_{L_k}$ defined in \eqref{eq51} also satisfies the error bound condition at the origin with modulus $a_{L_k}$. We can get the following theorem.
\begin{theorem}
	Let $\left\{\left(x^j, u^j, z^j\right)\right\}$ be the infinite sequence generated by ALM with stopping
	criteria {\rm (D1)} and {\rm (D2)}. The following results hold.
	\begin{itemize}
		\item [$(i)$] The sequence $\{\alpha^j\}$ converges to $\alpha^*$, one of the optimal solutions of \eqref{eq48}, and for all $j$	sufficiently large, there is
		\begin{equation*}
			{\rm dist}\left(\alpha^{j+1},\Omega\right)\le\theta_j{\rm dist}\left(\alpha^j,\Omega\right),
		\end{equation*}
		where $\Omega$ is the set of the optimal solutions of \eqref{eq48} and $\theta_j=a_\phi\left(a_\phi^2+\rho_j^2\right)^{-1/2}+2\rho_j/\left(1-\rho_j\right)\to\theta_\infty=a_\phi\left(a_\phi^2+\rho_\infty^2\right)^{-1/2}<1$ as $j\to +\infty$. 
		\item[$(ii)$] If the stopping criterion {\rm (D2)} is also used, then for all $j$ sufficiently large, there is
		\begin{equation*}
			\left\|\left(x^{j+1},u^{j+1},z^{j+1}\right)-\left(x^*,u^*,z^*\right)\right\|\le \theta'_j\|\alpha^{j+1}-\alpha^j\|,
		\end{equation*}
		where $\theta'_j=a_{L_k}\left(1+\kappa'_j\right)/\rho_j\to a_{L_k}/\rho_\infty$ as $j\to+\infty$.
	\end{itemize}
\end{theorem}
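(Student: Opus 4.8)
The plan is to exploit the classical equivalence, due to Rockafellar \cite{Rockafellar,RT1}, between the augmented Lagrangian method applied to the primal problem \eqref{eq46} and the inexact proximal point algorithm applied to the dual maximal monotone operator $\mathcal{T}_\phi=\partial\phi$. Under this correspondence the multiplier update \eqref{b} realizes an approximate resolvent step $\alpha^{j+1}\approx(\mathcal{I}+\rho_j\mathcal{T}_\phi)^{-1}(\alpha^j)$, and the inner stopping rules (D1) and (D2) translate into the inexactness criteria governing that proximal iteration. Since the excerpt already records that both $\mathcal{T}_\phi$ and $\mathcal{T}_{\mathcal{L}_k}$ are polyhedral and hence satisfy the error bound at the origin with moduli $a_\phi$ and $a_{\mathcal{L}_k}$, the remaining work is to feed these facts into the abstract rate theorems and to track the constants. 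Because \eqref{eq11} is a special instance of problem (D) of \cite{LXD}, the whole argument is a specialization of the analysis carried out there.

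For part $(i)$, I would first confirm that criterion (D1) bounds the subproblem suboptimality in exactly the form required to invoke the local linear-rate result for the inexact proximal point algorithm. The convergence of $\{\alpha^j\}$ to some $\alpha^*\in\Omega$ is already guaranteed by the preceding global-convergence theorem, so the new content is the contraction estimate. Applying \cite[Theorem 2]{Rockafellar} with the error bound modulus $a_\phi$ for $\mathcal{T}_\phi^{-1}$ yields ${\rm dist}(\alpha^{j+1},\Omega)\le\theta_j\,{\rm dist}(\alpha^j,\Omega)$, in which the exact proximal factor $a_\phi(a_\phi^2+\rho_j^2)^{-1/2}$ is perturbed by a correction coming from the inexactness tolerated by (D1). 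Letting $j\to\infty$, using $\rho_j\to\rho_\infty$ together with the summability of the tolerance sequence in (D1), the correction vanishes and gives $\theta_j\to\theta_\infty=a_\phi(a_\phi^2+\rho_\infty^2)^{-1/2}<1$.

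For part $(ii)$, the idea is to transfer the dual estimate to the primal iterates through the error bound for $\mathcal{T}_{\mathcal{L}_k}$. Because $(x^*,u^*,z^*,\alpha^*)$ is a zero of $\mathcal{T}_{\mathcal{L}_k}$ and $\mathcal{T}_{\mathcal{L}_k}^{-1}$ is Lipschitz at the origin with modulus $a_{\mathcal{L}_k}$, it suffices to bound the residual ${\rm dist}\big(0,\mathcal{T}_{\mathcal{L}_k}(x^{j+1},u^{j+1},z^{j+1},\alpha^{j+1})\big)$. The optimality conditions of the subproblem \eqref{a}, combined with the closed-form updates for $u^{j+1}$ and $z^{j+1}$ in step S2 of Algorithm \ref{alg3} and the multiplier formula \eqref{b}, reduce this residual to the single quantity $\|\nabla\varphi_j(x^{j+1})\|$, which criterion (D2) controls by $(\kappa'_j/\rho_j)\|\alpha^{j+1}-\alpha^j\|$. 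Substituting into the error bound delivers the stated inequality with $\theta'_j=a_{\mathcal{L}_k}(1+\kappa'_j)/\rho_j\to a_{\mathcal{L}_k}/\rho_\infty$.

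I expect the main obstacle to be the bookkeeping that links the approximately solved inner problems to Rockafellar's abstract inexactness framework---specifically, verifying that after eliminating $u$ and $z$ through Moreau-Yosida regularization the full primal-dual residual of $\mathcal{T}_{\mathcal{L}_k}$ is genuinely dominated by $\|\nabla\varphi_j(x^{j+1})\|$, so that (D2) alone suffices for part $(ii)$. Once this identification with problem (D) of \cite{LXD} is secured, the rate constants and their limits can be read off directly from that reference.
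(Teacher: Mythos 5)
Your proposal is correct and follows essentially the same route as the paper: the authors likewise reduce the theorem to Rockafellar's ALM-as-dual-PPA framework as instantiated in Theorem 3.3 of \cite{LXD}, using the polyhedrality of $\mathcal{T}_\phi$ and $\mathcal{T}_{\mathcal{L}_k}$ (hence metric subregularity / the error bounds with moduli $a_\phi$ and $a_{\mathcal{L}_k}$) together with criteria (D1) and (D2) to read off the rates. Your write-up is in fact more explicit than the paper's proof, which simply verifies that \eqref{eq48} is convex piecewise linear-quadratic with nonempty solution set and then cites \cite[Theorem 3.3]{LXD}.
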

\begin{proof}
	The proof is similar to that in \cite[Theorem 3.3]{LXD}. By definition 2.2.1 in \cite{SJ}, \eqref{eq48} is a convex piecewise linear quadratic programming problem. According to the optimal solution set is not empty, $\mathcal{T}_{L_k}^{-1}(0)\neq\emptyset$, so $\mathcal{T}_{L_k}$ is metrically subregular at $\left(x^*,u^*,z^*,\alpha^*\right)$ for the origin. Following Theorem 3.3 in \cite{LXD}, (i), (ii) holds. The proof is finished.
\end{proof}

\section{The SSN Method for \eqref{eq22}}\label{section4.2}
To solve \eqref{eq22}, since $\varphi_j$ is strongly convex and continuously differentiable, the minimization problem \eqref{eq22} has a unique solution $x^*$ which can be obtained via solving the following system of equation: 
\begin{equation}\label{eq53}
	\begin{split}
		0=\nabla \varphi_j(x)=\nabla G_1(x)+\nabla G_2(x)+\frac{1}{\sigma_{k}}(x-x^k),\\
	\end{split}
\end{equation}
where
\begin{equation*}\label{eq43}
	\begin{split}
		&\nabla G_1(x)=-\rho_j A^\top\left(f_1(x)-{\rm Prox}_{\frac{1}{\rho_j} h(\cdot)}\left(f_1(x)\right)\right),\\
		&\nabla G_2(x)=\rho_j\left( f_2(x)-{\rm Prox}_{\frac{\lambda}{\rho_j} \left\|\cdot\right\|_1}\left( f_2(x)\right)\right).
	\end{split}
\end{equation*} 

We know that ${\rm Prox}_{\frac{1}{\rho_j} h(\cdot)}(\cdot)$ and ${\rm Prox}_{\frac{\lambda}{\rho_j}\|\cdot\|_1}(\cdot)$ are strongly semismooth everywhere as special cases in \cite[Theorem 2]{Li}. Therefore, we apply the SSN method to solve the nonsmooth equation \eqref{eq53}. 

A key point is to characterize the generalized Jacobian of $\nabla \varphi_j(x)$. To that end, define the multivalued mapping $\hat{\partial}^2 \varphi_j(\cdot)\ :\ \Re^p\rightrightarrows \Re^p\times\Re^p$ as follows: at $x\in\Re^p$,
\begin{align*}\label{eq25}
	\hat{\partial}^2 \varphi_j(x):=\Big\{{\rho_j}\left(A^\top\left(I- V_1\right)A+I-V_2\right)+\frac{1}{\sigma_{k}}I\ \Big|\ V_1\in\mathcal{M}(f_1(x)),\nonumber\ V_2\in\partial{\rm Prox}_{\frac{\lambda}{\rho_j} \left\|\cdot\right\|_1}\left(f_2(x)\right)\Big\},
\end{align*}
where $\mathcal{M}(\cdot)$ is a multivalued mapping defined in the following Proposition \ref{pro4}, which can be viewed as the generalized Jacobian of ${\rm Prox}_{\frac{1}{\rho_j} h(\cdot)}\left(f_1(x)\right)$ \cite[Proposition 2.9]{H}, $\partial{\rm Prox}_{\frac{\lambda}{\rho_j} \left\|\cdot\right\|_1}\left(\cdot\right)$ is the Clarke's subdifferential of ${\rm Prox}_{\frac{\lambda}{\rho_j} \left\|\cdot\right\|_1}\left(\cdot\right)$.

According to the chain rules in \cite{partical}, there is $\hat{\partial}^2 \varphi_j(x)d=\partial^2 \varphi_j(x)d,\ \forall\ d\in\Re^p,$ where $\partial^2 \varphi_j(x)$ is the Clarke's generalized Jacobian of $\varphi_j(x)$.

Now we are ready to present the globalized version of SSN method for \eqref{eq22} which is used in \cite{Sun2006,Qi,Yin,LQN2010}. 
\begin{algorithm}[!htbp]
	\caption{Globalized version of the SSN method for \eqref{eq22}}\label{alg4}
	\hspace*{0.02in} {\bf Input:} 
	$\bar{\mu}\in\left(0,\frac{1}{2}\right)$, $\delta\in\left(0,1\right)$, $\epsilon_{SSN}>0$, $\bar{\eta}_0>0,\ $and $\bar{\eta}_1>0$.\\
	\hspace*{0.02in} {\bf Initialization:}
	choose $x^{0}\in\Re^p,\ $$i=0$.\\
	\hspace*{0.02in} {\bf S1:} Select an element $\mathcal{H}_i\in\partial^2 \hat{\varphi}_j\left(x^{i}\right)$. Apply the conjugate gradient (CG)
	method to find an approximate solution $d^i\in\Re^p$ of the following system
	\begin{equation*}\label{eq26}
		\mathcal{H}_i\left(d^i\right)=-\nabla \varphi_j\left(x^{i}\right),
	\end{equation*}
	such that
	\begin{equation*}
		\left\|\mathcal{H}_i\left(d^i\right)+\nabla \varphi_j\left(x^{i}\right)\right\|\le\eta_i\|\nabla\varphi_j\left(x^i\right)\|,
	\end{equation*} 
	where $\eta_i:= \min\left(\bar{\eta}_0,\bar{\eta}_1\left\|\nabla \varphi_j\left(x^{i}\right)\right\|\right)$.\\
	\hspace*{0.02in} {\bf S2:} Let $\gamma_i=\delta^{m_i}$, where $m_i$ is the smallest nonnegative integer $m$ such that the following holds
	\begin{equation*}
		\varphi_j\left(x^{i}+\delta^m d^i\right)\le \varphi_j\left(x^{i}\right)+\bar{\mu}\delta^m\left(\nabla \varphi_j\left(x^{i}\right)\right)^\top d^i.
	\end{equation*}\\
	\hspace*{0.02in} {\bf S3:} Let $x^{i+1}=x^{i}+\gamma_i d^i.$\\
	\hspace*{0.02in} {\bf S4:} If $\|\nabla \varphi_j\left(x^{i+1}\right)\|\le \epsilon_{SSN}$, stop; otherwise, $i:=i+1$, go to {\bf S1}.
\end{algorithm}
To demonstrate the convergence of Algorithm \ref{alg4}, we need the characterization of $\partial {\rm Prox}_{\tau \|\cdot\|_1}(\cdot)$ and $\mathcal{M}(\cdot)$. It is easy to show that
\begin{equation*}
		\partial {\rm Prox}_{\tau \left\|\cdot\right\|_1}\left(x\right)=\big\{{\rm Diag}\left(v\right)\in\Re^{p\times p}\ \big|\ v_i=1,\ \text{if}\ |x_i|>\tau;\ v_i=0,\ \text{if}\ |x_i|<\tau;\ v_i\in[0,1],\ \text{if}\ |x_i|=\tau\big\}.
\end{equation*}
The following results about ${\rm Prox}_{\tau h(\cdot)}(\cdot)$ come from \cite{H}.
\begin{proposition}{\rm \cite[Proposition 2.3]{H}}\label{pro6}
	Given $y\in\Re^n$, there exists a permutation matrix $P_y\in\Re^{n\times n}$ such that $y^\downarrow=P_yy$, and $y^\downarrow_1\ge y^\downarrow_2\ge \dots \ge y^\downarrow_n$. Then it holds
	\begin{equation*}
		{\rm Prox}_{\tau h}\left(y\right)=P_y^{\top}\Pi_\mathcal{D}\left(P_yy-\frac{2\tau}{n(n-1)} w\right),
	\end{equation*}
	where the vector $w\in\Re^n$ is defined by $w_k=n-2k+1,\ k=1,\dots,n$, $\mathcal{D}=\left\{x\in\Re^n\ |\ Cx\ge 0\right\}$, and $C$ is a matrix such that $Cx=[x_1-x_2;\dots;x_{n-1}-x_n]\in\Re^{n-1}$. $\Pi_\mathcal{D}(z)$ is the optimal solution of 
	\begin{equation}\label{eq9}
		\min\limits_{x\in\Re^n}\left\{\left. \frac{1}{2}\|x-z\|^2\ \right|\ x\in \mathcal{D}\right\}.
	\end{equation}
\end{proposition}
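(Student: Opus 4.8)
The plan is to reduce the proximal computation to a projection onto the ordered cone $\mathcal{D}$ by exploiting two structural features of $h$: its invariance under coordinate permutations, and the fact that on $\mathcal{D}$ the nonsmooth sum collapses to a linear functional. By definition,
\begin{equation*}
  {\rm Prox}_{\tau h}(y) = \mathop{\arg\min}\limits_{x\in\Re^n}\left\{ \frac{2\tau}{n(n-1)}\sum\limits_{1\le i<j\le n}|x_i - x_j| + \frac{1}{2}\|x-y\|^2 \right\},
\end{equation*}
and since the quadratic term renders the objective strongly convex, this minimizer is unique. First I would record the elementary identity that for any vector sorted decreasingly, $x_1\ge\cdots\ge x_n$, one has $\sum_{i<j}|x_i-x_j| = \langle w, x\rangle$ with $w_k = n-2k+1$, because the $k$-th largest entry enters with a plus sign $(n-k)$ times and a minus sign $(k-1)$ times.

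Next I would establish the permutation equivariance ${\rm Prox}_{\tau h}(Py) = P\,{\rm Prox}_{\tau h}(y)$ for every permutation matrix $P$. This follows by the substitution $x=Pz$ in the defining minimization, using $h(Pz)=h(z)$ together with the orthogonality $\|Pz-Py\|=\|z-y\|$. Applying this with the sorting permutation $P_y$ (so that $y^\downarrow = P_y y$) reduces the problem to computing ${\rm Prox}_{\tau h}(y^\downarrow)$ and then left-multiplying by $P_y^\top$.

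The key step, and the one I expect to be the main obstacle, is to show that when $y^\downarrow$ is sorted decreasingly the unique minimizer $x^\star = {\rm Prox}_{\tau h}(y^\downarrow)$ also lies in $\mathcal{D}$, i.e.\ is itself sorted decreasingly. I would argue by a rearrangement/exchange inequality: for the transposition swapping positions $i<j$, denoting by $\tilde x$ the swapped vector, permutation invariance gives $h(\tilde x)=h(x^\star)$, while a direct expansion yields $\tfrac12\|\tilde x - y^\downarrow\|^2 - \tfrac12\|x^\star - y^\downarrow\|^2 = (x^\star_i - x^\star_j)(y^\downarrow_i - y^\downarrow_j)$. If $x^\star_i < x^\star_j$ while $y^\downarrow_i \ge y^\downarrow_j$, this difference is nonpositive, so $\tilde x$ does no worse; uniqueness of the minimizer then forces $x^\star_i \ge x^\star_j$ whenever $y^\downarrow_i > y^\downarrow_j$, with ties resolved by taking the sorted representative. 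Hence $x^\star\in\mathcal{D}$.

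Finally, since $x^\star\in\mathcal{D}$ and $h$ coincides with the linear map $\tfrac{2}{n(n-1)}\langle w,\cdot\rangle$ on $\mathcal{D}$, the unconstrained minimizer $x^\star$ must also minimize the linearized objective over the constraint set: writing $g$ for the full objective and $\ell$ for its linearization, any feasible $x\in\mathcal{D}$ satisfies $\ell(x)=g(x)\ge g(x^\star)=\ell(x^\star)$. Completing the square rewrites $\ell$, up to an additive constant, as $\tfrac12\bigl\|x-\bigl(y^\downarrow - \tfrac{2\tau}{n(n-1)}w\bigr)\bigr\|^2$, whose minimizer over $\mathcal{D}$ is by definition $\Pi_\mathcal{D}\bigl(y^\downarrow - \tfrac{2\tau}{n(n-1)}w\bigr)$. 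Combining with the equivariance reduction gives ${\rm Prox}_{\tau h}(y) = P_y^\top\Pi_\mathcal{D}\bigl(P_y y - \tfrac{2\tau}{n(n-1)}w\bigr)$, as claimed.
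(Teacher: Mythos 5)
The paper does not prove this proposition at all: it is imported verbatim from the cited reference (Proposition 2.3 of \cite{H}), so there is no in-paper argument to compare against. Judged on its own, your proof is correct and self-contained. The three ingredients — the identity $\sum_{i<j}|x_i-x_j|=\langle w,x\rangle$ on sorted vectors with $w_k=n-2k+1$, the permutation equivariance ${\rm Prox}_{\tau h}(Py)=P\,{\rm Prox}_{\tau h}(y)$, and the exchange argument showing the minimizer for sorted input is itself sorted — are exactly what is needed, and the final step (the unconstrained minimizer of $g$ lies in $\mathcal{D}$, where $g$ agrees with the linearization $\ell$, hence it minimizes $\ell$ over $\mathcal{D}$, hence it equals $\Pi_{\mathcal D}\bigl(y^\downarrow-\tfrac{2\tau}{n(n-1)}w\bigr)$ by uniqueness of that projection) is airtight. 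One small tidying remark: the clause ``with ties resolved by taking the sorted representative'' is unnecessary and slightly misleading. When $y^\downarrow_i=y^\downarrow_j$ the swapped point $\tilde x$ attains exactly the same objective value, so uniqueness of the minimizer already forces $\tilde x=x^\star$, i.e.\ $x^\star_i=x^\star_j$; there is no residual choice of representative to make, and the unique minimizer is automatically in $\mathcal{D}$ in all cases.
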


Define $\tilde{y}:=P_yy-\frac{2\tau}{n(n-1)}w$, denote the active index set by
\begin{equation}\label{eq56}
	\mathcal{I}_\mathcal{D}\left(\tilde{y}\right):=\left\{i\ |\ C_i\Pi_\mathcal{D}\left(\tilde{y}\right)=0,\ i=1,2,\dots ,n-1\right\},
\end{equation}
where $C_i$ is the $i$-th row of $C$. Define a collection of index subsets of $\{1,\dots,n-1\}$ as
follows: $\mathcal{Q}_{\mathcal{D}}(\tilde{y}):=\big\{Q\ |\ \exists\ \lambda\in\mathcal{W}_\mathcal{D}(\tilde{y}),\ {\rm s.t.}\ {\rm supp}(\lambda)\subset Q \subset \mathcal{I}_\mathcal{D}(\tilde{y}),\ C_Q\ \text{is of}$
$\text{full row rank}\big\}$, where supp($\lambda$) denotes the support of $\lambda$, $C_Q$ is the matrix consisting of the rows of $C$ indexed by $Q$, and $\mathcal{W}_\mathcal{D}(\tilde{y}):=\big\{\lambda\in\Re^{n-1}\ |\ (\tilde{y},\lambda)\ \text{satisfies}$
$\text{the KKT conditions of \eqref{eq9}}\big\}$. Let $\Sigma_Q\in\Re^{\left(n-1\right)\times\left(n-1\right)}$ is a diagonal matrix, where
\begin{equation}\label{eq8}
	\Sigma_{ii}=\begin{cases}
		1,& i\in Q,\\
		0,&\text{otherwise}.
	\end{cases}
\end{equation}
\begin{proposition}{\rm \cite[Proposition 2.6]{H}}\label{pro4}
	Given $y\in\Re^n$, we define
	\begin{equation*}
		\mathcal{M}\left(y\right):=\left\{U\in\Re^{n\times n}\ |\ U=P_y^\top\left(I_n-C_Q^\top(C_Q C_Q^\top)^{-1}C_Q\right)P_y,\ Q\in\mathcal{Q}_\mathcal{D}(\tilde{y})\right\},
	\end{equation*}
	which can be viewed as the generalized Jacobian of ${\rm Prox}_{\tau h(\cdot)}\left(y\right)$. We can get
	\begin{equation*}
		P_y^{\top}\left(I_n-C^{\top}\left(\Sigma_Q CC^{\top}\Sigma_Q\right)^\dag C\right)P_y=P_y^\top\left(I_n-C_Q^\top(C_Q C_Q^\top)^{-1}C_Q\right)P_y,
	\end{equation*}
	where $\Sigma_Q$ is defined in \eqref{eq8}, $C$ and $P_y$ are defined in Proposition $\ref{pro6}$, $\left(\cdot\right)^\dag$ denotes the pseudoinverse. 
\end{proposition}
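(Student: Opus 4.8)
The plan is to first strip away the orthogonal conjugation. Since $P_y$ is a permutation matrix, it is orthogonal, so establishing the claimed equality is equivalent to establishing the central identity
$$C^{\top}\left(\Sigma_Q CC^{\top}\Sigma_Q\right)^\dag C = C_Q^\top(C_Q C_Q^\top)^{-1}C_Q;$$
the displayed conclusion then follows by subtracting both sides from $I_n$ and conjugating on the left by $P_y^\top$ and on the right by $P_y$.

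To connect the diagonal selector $\Sigma_Q$ with the row-selected matrix $C_Q$, I would introduce the row-selection matrix $S\in\Re^{|Q|\times(n-1)}$ whose rows are the standard basis vectors $e_i^\top$ for $i\in Q$. It satisfies the three elementary relations $SS^\top=I_{|Q|}$, $S^\top S=\Sigma_Q$, and $SC=C_Q$. Using these, the inner matrix rewrites as
$$\Sigma_Q CC^\top \Sigma_Q = S^\top S\, CC^\top\, S^\top S = S^\top (C_Q C_Q^\top) S.$$

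The heart of the argument is to compute the pseudoinverse of $S^\top (C_Q C_Q^\top) S$. Here I invoke the hypothesis $Q\in\mathcal{Q}_\mathcal{D}(\tilde y)$, which guarantees that $C_Q$ has full row rank, so $M:=C_Q C_Q^\top$ is symmetric positive definite and hence invertible of order $|Q|$. I claim
$$\left(S^\top M S\right)^\dag = S^\top M^{-1} S,$$
and I would verify this by checking the four Moore--Penrose conditions directly, each time absorbing a factor $SS^\top=I_{|Q|}$. For the range-symmetry conditions one computes $(S^\top M S)(S^\top M^{-1}S)=S^\top M M^{-1} S=S^\top S=\Sigma_Q$ and, symmetrically, $(S^\top M^{-1}S)(S^\top M S)=\Sigma_Q$; since $\Sigma_Q$ is symmetric these two conditions hold, and the two absorption identities $(S^\top M S)(S^\top M^{-1}S)(S^\top M S)=S^\top M S$ and $(S^\top M^{-1}S)(S^\top M S)(S^\top M^{-1}S)=S^\top M^{-1}S$ follow the same way.

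Substituting this pseudoinverse back and using $SC=C_Q$ once more yields
$$C^\top \left(\Sigma_Q CC^\top \Sigma_Q\right)^\dag C = C^\top S^\top (C_Q C_Q^\top)^{-1} S C = C_Q^\top (C_Q C_Q^\top)^{-1} C_Q,$$
which is exactly the central identity. I expect the main obstacle to be guessing, and then cleanly verifying, the correct closed form $S^\top M^{-1}S$ for the pseudoinverse; the two structural facts that make every Moore--Penrose check collapse are the orthonormality of the rows of $S$ (i.e.\ $SS^\top=I_{|Q|}$) and the invertibility of $C_Q C_Q^\top$, the latter being precisely the full-row-rank requirement built into the definition of $\mathcal{Q}_\mathcal{D}(\tilde y)$.
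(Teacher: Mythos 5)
Your argument is correct. Note first that the paper does not prove this proposition at all: it is imported verbatim as Proposition 2.6 of the cited reference, so there is no in-paper proof to compare against. Your proposal therefore supplies a self-contained verification that the paper leaves to the literature. The structure is sound: reducing to the core identity $C^{\top}\left(\Sigma_Q CC^{\top}\Sigma_Q\right)^\dag C = C_Q^\top(C_Q C_Q^\top)^{-1}C_Q$ is legitimate since $P_y$ is orthogonal; the three relations $SS^\top=I_{|Q|}$, $S^\top S=\Sigma_Q$, $SC=C_Q$ for the row-selection matrix are immediate; the factorization $\Sigma_Q CC^\top\Sigma_Q=S^\top(C_QC_Q^\top)S$ follows by grouping $S^\top(SC)(SC)^\top S$; and the closed form $\left(S^\top M S\right)^\dag=S^\top M^{-1}S$ does pass all four Moore--Penrose conditions exactly as you describe, each check collapsing via $SS^\top=I_{|Q|}$ and the invertibility of $M=C_QC_Q^\top$, which is indeed built into the definition of $\mathcal{Q}_{\mathcal{D}}(\tilde y)$ through the full-row-rank requirement on $C_Q$. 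One could reach the same conclusion slightly more abstractly by observing that both sides of the core identity are the orthogonal projector onto the row space of $C_Q$ composed with $C$ on either side, but your direct Moore--Penrose verification is cleaner to check and requires no additional theory.
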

To guarantee the quadratic convergence rate of the SSN method, one need to verify that each element in $\hat{\partial}^2\varphi_j(x)$ is positive definite. Below we confirm the result in Lemma \ref{lem1} with the proof postponed in Appendix.
\begin{lemma}\label{lem1}
	Any $\mathcal{H}\in\hat{\partial}^2 \varphi_j\left(x\right)$ is symmetric and positive definite.
\end{lemma}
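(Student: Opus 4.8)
The plan is to exploit the additive structure of $\mathcal{H}$ and show that each summand is symmetric, with two of them positive semidefinite and the curvature term $\tfrac{1}{\sigma_k}I$ strictly positive definite. Writing a generic element as
\begin{equation*}
	\mathcal{H}=\rho_j\, A^\top(I-V_1)A+\rho_j\,(I-V_2)+\tfrac{1}{\sigma_k}I,\qquad V_1\in\mathcal{M}(f_1(x)),\ V_2\in\partial{\rm Prox}_{\frac{\lambda}{\rho_j}\|\cdot\|_1}(f_2(x)),
\end{equation*}
symmetry will follow immediately once $V_1$ and $V_2$ are shown to be symmetric, since then every term is symmetric and $A^\top(I-V_1)A$ is too. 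Positive definiteness will then reduce to showing that $I-V_1$ and $I-V_2$ are both positive semidefinite: the first two terms carry the factor $\rho_j>0$, so they contribute a positive semidefinite matrix, and the remaining $\tfrac{1}{\sigma_k}I$ with $\sigma_k>0$ forces $\langle\mathcal{H}d,d\rangle\ge\tfrac{1}{\sigma_k}\|d\|^2>0$ for every $d\ne 0$.

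For the $V_2$ term I would invoke the explicit description of $\partial{\rm Prox}_{\tau\|\cdot\|_1}(\cdot)$ given just before the lemma: any such $V_2$ equals ${\rm Diag}(v)$ with $v_i\in[0,1]$. Hence $V_2$ is symmetric with spectrum in $[0,1]$, so $0\preceq V_2\preceq I$ and $I-V_2$ is symmetric positive semidefinite. This step is routine.

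The analysis of the $V_1$ term is where the real content lies, and I would treat it via Proposition \ref{pro4}. There $V_1=P_y^\top\big(I_n-C_Q^\top(C_QC_Q^\top)^{-1}C_Q\big)P_y$, where $P_y$ is a permutation matrix and, by the definition of $\mathcal{Q}_\mathcal{D}(\tilde{y})$, the matrix $C_Q$ has full row rank so that $C_QC_Q^\top$ is invertible. I would observe that $\Pi:=C_Q^\top(C_QC_Q^\top)^{-1}C_Q$ is precisely the orthogonal projector onto the row space of $C_Q$: it is symmetric and idempotent, whence $0\preceq\Pi\preceq I$ and its complement $I_n-\Pi$ is again an orthogonal projector with $0\preceq I_n-\Pi\preceq I$. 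Because $P_y$ is orthogonal, conjugation by $P_y$ is a congruence that preserves symmetry and the spectrum, so $V_1$ is symmetric with $0\preceq V_1\preceq I$, giving that $I-V_1$ is symmetric positive semidefinite. Consequently $\langle A^\top(I-V_1)A\,d,d\rangle=\langle(I-V_1)Ad,Ad\rangle\ge 0$, so $A^\top(I-V_1)A$ is symmetric positive semidefinite.

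Assembling the three pieces then finishes the argument: $\mathcal{H}$ is a sum of symmetric matrices, hence symmetric, and a sum of a positive semidefinite matrix with the positive definite matrix $\tfrac{1}{\sigma_k}I$, hence positive definite. The main obstacle is the middle paragraph, namely recognizing that the matrix occurring in $\mathcal{M}(\cdot)$ is a complementary orthogonal projector and justifying the two eigenvalue bounds $0\preceq V_1\preceq I$; this rests on the full-row-rank property of $C_Q$ built into the definition of $\mathcal{Q}_\mathcal{D}(\tilde{y})$ and on the orthogonality of the permutation $P_y$, after which the positive definiteness conclusion is immediate.
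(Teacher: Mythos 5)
Your proposal is correct, and it reaches the conclusion by a genuinely different route at the one step that carries the real content. The paper's proof of Lemma~\ref{lem1} handles the $V_1$ term by invoking Proposition~\ref{pro5} to obtain the explicit block-diagonal form $V_1=P_y^\top{\rm Diag}(\Gamma_1,\dots,\Gamma_N)P_y$, passes to the complementary blocks $\Theta_i$ in \eqref{eq44}, and then verifies positive semidefiniteness of the centering blocks by the direct computation $v^\top\left(I_r-\tfrac{1}{r}E_r\right)v=\tfrac{1}{r}\sum_{1\le i<j\le r}(v_i-v_j)^2\ge 0$. You instead work with the first representation in Proposition~\ref{pro4}, observe that $C_Q^\top(C_QC_Q^\top)^{-1}C_Q$ is the orthogonal projector onto the row space of $C_Q$ (using the full-row-rank property built into $\mathcal{Q}_\mathcal{D}(\tilde{y})$), and conclude $0\preceq V_1\preceq I$ by orthogonal conjugation with $P_y$. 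Your argument is shorter, bypasses Proposition~\ref{pro5} entirely, and makes the symmetry claim of the lemma explicit (the paper's proof only argues positive definiteness and leaves symmetry to inspection); what the paper's route buys in exchange is the concrete block structure of $I-V_1$, which is the form actually used when assembling and exploiting the generalized Jacobian in the semismooth Newton system. Both arguments are complete and correct.
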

Based on Lemma \ref{lem1}, the convergence rate of the SSN method is given below.
\begin{theorem}{\rm \cite[Theorem 3.2]{1993}}
	Let $x^*$ be the solution of \eqref{eq53}. Then the sequence $\left\{x^i\right\}$ generated by Algorithm $\ref{alg4}$ is quadratically convergent to $x^*$. 
\end{theorem}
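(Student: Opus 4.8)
The plan is to verify the two structural hypotheses under which the classical globalized semismooth Newton convergence result \cite[Theorem 3.2]{1993} applies—strong semismoothness of the equation map and nonsingularity of its generalized Jacobian at the solution—and then to track how the Armijo line search forces the unit Newton step in a neighborhood of $x^*$.

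First I would establish that $\nabla\varphi_j$ is strongly semismooth everywhere. Since $\nabla G_1(x)=-\rho_j A^\top\big(f_1(x)-{\rm Prox}_{\frac{1}{\rho_j}h(\cdot)}(f_1(x))\big)$ and $\nabla G_2(x)=\rho_j\big(f_2(x)-{\rm Prox}_{\frac{\lambda}{\rho_j}\|\cdot\|_1}(f_2(x))\big)$ are each built from the proximal mappings—which are strongly semismooth everywhere as noted via \cite[Theorem 2]{Li}—composed with the affine maps $f_1,f_2$ and combined with linear terms, the sum and composition rules for strongly semismooth functions yield that $\nabla\varphi_j=\nabla G_1+\nabla G_2+\frac{1}{\sigma_k}(x-x^k)$ is strongly semismooth. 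This makes \eqref{eq53} a strongly semismooth system, so the approximation estimate $\|\nabla\varphi_j(x)-\nabla\varphi_j(x^*)-\mathcal H(x-x^*)\|=O(\|x-x^*\|^2)$ holds for every $\mathcal H\in\partial^2\varphi_j(x)$ as $x\to x^*$.

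Second I would supply the regularity condition and the descent property. By Lemma \ref{lem1} every $\mathcal H\in\hat\partial^2\varphi_j(x)$ is symmetric positive definite, hence nonsingular, and by the chain-rule identity $\hat\partial^2\varphi_j(x)d=\partial^2\varphi_j(x)d$ the same holds for the Clarke generalized Jacobian $\partial^2\varphi_j(x^*)$. Positive definiteness also guarantees that each inexactly computed direction $d^i$ (satisfying the CG residual bound with forcing term $\eta_i=\min(\bar\eta_0,\bar\eta_1\|\nabla\varphi_j(x^i)\|)$) is a descent direction for $\varphi_j$, so the backtracking rule in S2 terminates after finitely many steps and is well defined. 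Combined with the strong convexity and coercivity of $\varphi_j$, standard global convergence of damped semismooth Newton methods then gives that $\{x^i\}$ converges to the unique minimizer $x^*$ of \eqref{eq22} solving \eqref{eq53}.

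Finally I would argue the transition to the fast local phase. Because $\eta_i=O(\|\nabla\varphi_j(x^i)\|)$ and $\|\nabla\varphi_j(x^i)\|\to0$, the forcing term vanishes, so near $x^*$ the inexact Newton direction approaches the exact one; together with the strong semismoothness and the uniform positive definiteness of $\mathcal H_i$ on a neighborhood of $x^*$, this forces the full step $\gamma_i=1$ (i.e. $m_i=0$) to pass the Armijo test for all sufficiently large $i$. Once unit steps are taken, the strongly semismooth estimate and nonsingularity give $\|x^{i+1}-x^*\|=O(\|x^i-x^*\|^2)$, the asserted quadratic rate, matching exactly the hypotheses of \cite[Theorem 3.2]{1993}. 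The main obstacle is precisely this transition argument—showing that the line search eventually accepts the unit step—which requires the forcing sequence to decay at least linearly in the gradient norm and the generalized Jacobians to remain uniformly positive definite near $x^*$; with these in place the remaining pieces reduce to invoking the cited theorem.
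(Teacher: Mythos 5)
Your proposal is correct and follows essentially the same route as the paper: both verify the two hypotheses needed for the classical globalized semismooth Newton result—strong semismoothness of $\nabla\varphi_j$ (inherited from the strongly semismooth proximal mappings composed with affine maps) and positive definiteness of every element of $\hat{\partial}^2\varphi_j(x)$ via Lemma \ref{lem1}—and then conclude quadratic convergence by citing the known theorem. The paper's proof stops at this citation, whereas you additionally unpack the cited theorem's internal mechanics (descent property of the inexact CG direction, eventual acceptance of the unit step in the Armijo search); this is a sound elaboration rather than a different approach.
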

\begin{proof}
	According to Lemma \ref{lem1}, each element in $\hat{\partial}^2\varphi_j(x)$ is positive definite, and since ${\rm Prox}_{\frac{1}{\rho_j} h(\cdot)}(\cdot)$ and ${\rm Prox}_{\frac{\lambda}{\rho_j}\|\cdot\|_1}(\cdot)$ are strongly semismooth everywhere, $\nabla \varphi_j(x)$ is also strongly semismooth \cite{Li}. By \cite[Theorem 3.2]{LXD}, the sequence $\left\{x^i\right\}$ generated by Algorithm \ref{alg4} converges to $x^*$ quadratically.
\end{proof}

According the strong convexity of $\varphi_j$, we have
\begin{equation*}
	L_{\rho_{j}}\left(x^{j+1},u^{j+1},z^{j+1}\right)-\inf L_{\rho_j}=\varphi_j\left(x^{j+1}\right)-\inf\varphi_j\le\frac{1}{2}\left\|\nabla \varphi_j\left(x^{j+1}\right)\right\|^2.
\end{equation*}
Therefore, the stopping criteria {\rm (C)} and {\rm (D1)} can be simplified to
\begin{equation*}\label{eq34}
	\begin{split}
		\text{(C')}&\quad\left\|\nabla \varphi_j\left(x^{j+1}\right)\right\|\le \frac{\epsilon_j}{\sqrt{\rho_j}},\ \ \epsilon_j\ge 0,\ \ \sum_{j=0}^{\infty}\epsilon_j<\infty,\\
		\text{(D1')}&\quad\left\|\nabla \varphi_j\left(x^{j+1}\right)\right\|\le\left(\kappa_j/\sqrt{\rho_j}\right)\|\alpha^{j+1}-\alpha^j\|,\ \ \sum\limits_{j=0}^\infty \kappa_j<+\infty.
	\end{split}
\end{equation*}
That is, the stopping criteria {\rm (C)}, {\rm (D1)}, and {\rm (D2)} will be satisfied as long as $\nabla \varphi_j\left(x^{j+1}\right)$ is sufficiently small.

\section{Numerical Results}\label{section5}
In this section, we conduct numerical experiments to evaluate the performance of AS-PPA for solving \eqref{eq19}. It contains five parts. In the first part, we provide the test problems and the implementation of the algorithm. In the second part, we investigate the performance of Algorithm \ref{alg1} based on different parameters. Furthermore, we verified the effect of the enhanced technique proposed in subsection \ref{sec2.3} applied to the hdr lasso problem. In the third and fourth parts, we compare Algorithm \ref{alg1} with other methods for small-scale and large-scale hdr lasso problems. In the last part, we compare Algorithm \ref{alg1} with other methods for Square-Root Lasso models with different scales. All of our results are obtained by running Matlab R2020a on a windows workstation (Intel Core i5-7200U @ 2.50GHz, 12G RAM).
\subsection{Test Problems}\label{section5.1} 
We test the following examples. 
\begin{itemize}
	\item[E1] \cite{Wang} We simulate	random data from the regression model $Y_i=X^{\top}_i x^*+\varepsilon_i,\ i= 1, \dots , n$, where $X_i$ is generated from a $p$-dimensional multivariate normal distribution $N_p\left(0,\Sigma\right)$ and is independent of $\varepsilon_i$, $x^*=\big(\sqrt{3},\sqrt{3},\sqrt{3},$
	$0,\dots,0\big)^\top\in\Re^p$. The correlation matrix $\Sigma$ has a compound symmetry structure: $\Sigma_{(i,j)}= 0.5$ for $i\neq j$; and $\Sigma_{(i,j)}= 1$ for $i=j$. We consider six different distributions for $\varepsilon_i$: (i) normal distribution with mean 0 and variance 0.25 (denoted by $N\left(0,0.25\right)$); 
	(ii) normal distribution with mean 0 and variance 1 (denoted by $N\left(0,1\right)$); (iii) normal distribution with mean 0 and variance 2 (denoted by $N\left(0,2\right)$); (iv) mixture normal distribution $\varepsilon\thicksim 0.95N\left(0,1\right) + 0.05N\left(0,100\right)$ (denoted by $MN$); (v)$\varepsilon\thicksim\sqrt{2}t\left(4\right)$, where $t\left(4\right)$ denotes the $t$ distribution with 4 degree of freedom; and (vi) $\varepsilon\thicksim$ Cauchy(0,1), where Cauchy(0,1) denotes the standard Cauchy distribution.
	\item[E2] \cite{Wang} Similar as in E1, we take $x^*=\big(2,2,2,2,1.75,1.75,1.75,1.5,1.5,1.5,1.25,1.25,1.25,1,1,1,0.75,0.75,$
	$0.75,0.5,0.5,0.5,0.25,0.25,0.25,0_{p-25}\big)^{\top}$, where $0_{p-25}$ is a $(p-25)$-dimensional vector of zeros. Comparing with E1, this is a considerably more challenging scenario with 25 active variables and a number of weak signals, such as 0.5, 0.25, etc. 
	\item[E3] \cite{Wang} $x^*$ is the same as in E1. We use the following three different choices of $\Sigma$: (i) the compound symmetry correlated correlation matrix with correlation coefficient $0.8$, denoted as $\Sigma_1$; (ii) the compound symmetry correlation matrix with correlation coefficient $0.2$, denoted as $\Sigma_2$; and (iii)
	the compound symmetry correlation matrix with correlation coefficient $0.5$, denoted as $\Sigma_3$. For each choice of $\Sigma$, we consider three different error distributions: $N\left(0,1\right)$, the mixture normal distribution in E1, and the Cauchy distribution in E1.  
	\item[E4] \cite{SafeRank} The simulation data is generated from $Y=X^\top x^*+\varepsilon$ with $n=10$ and $p\in\left\{5000,10000,150000\right\}$. Here, elements of $X$ are randomly generated from the exponential
	distribution $e(3)$ and the error $\varepsilon\thicksim N(0,0.01)$. The number of nonzero coefficients of $x^*$ is $20\%*p$, and the nonzero part of $x^*$ is generated from the standard normal distribution. 
		\item[E5] \cite{square} $X_i$ is generated from a $p$-dimensional multivariate normal distribution $N_p(0,\Sigma)$. The correlation matrix is the Toeplitz correlation matrix, i.e., $\Sigma_{jk}=(1/2)^{|j-k|}$, $\varepsilon_i\sim N(0,1),\ x^*=(1,1,1,1,1,0_{p-5})$. 
		\item[E6] \cite{square} Similar as in E5, we take $\varepsilon_i\sim t_4/\sqrt{2}$.
\end{itemize} 

We choose $\lambda$ following the way in \cite[Theorem 1]{Wang}\footnote{	$\lambda=cG^{-1}_{\left\|S_n\right\|_\infty}\left(1-\alpha_0\right)$, where $\alpha_0=0.10$, $c=1.1$ and $G^{-1}_{\left\|S_n\right\|_\infty}\left(1-\alpha_0\right)$ denotes the $\left(1-\alpha_0\right)$-quantile of the distribution of $\left\|S_n\right\|_\infty$, $S_n:=-2[n\left(n-1\right)]^{-1}X^\top\xi$, $X$ is the design matrix, the random vector $\xi:=2r-(n+1)\in\Re^n$, with $r$ following the uniform distribution on the permutations of the integers $\{1,2,\dots,n\}$.}. The parameters are taken as $\epsilon=1.0\times 10^{-6},\ \epsilon_{SSN}=1.0\times 10^{-6}$, $\{\delta_k'\}=\{\delta_k\}=\{\gamma_k\}=\{0.8,0.8^2,\dots\}$. 

Taking account of numerical rounding errors, we define the number of nonzero elements of a vector $x\in\Re^p$ and the index set of nonzero components of $x$ as follows (see \cite[Page 17]{WCJ})
\begin{equation*}
	\hat{k}:=\min\left\{k\ \left|\ \sum\limits_{i=1}^k|\tilde{x}_i|\ge 0.9999\left\|x\right\|_1\right.\right\},\ \ I(x)=\left\{ i\ \left|\ x_i\ge \tilde{x}_{\hat{k}} \right.,\ i=1,\dots,p \right\},
\end{equation*}
where $\tilde{x}$ is sorted, i.e., $|\tilde{x}_1|\ge|\tilde{x}_2|\ge\dots\ge|\tilde{x}_p|$. Let $\bar{I}(x)=\left\{1,\dots,p\right\}\backslash I(x)$.

We report the following information: the relative KKT residual ($\eta_{KKT}$) calculated by  
\begin{equation*}\label{eq50}
	\begin{split}
		\max\left\{\frac{\|u^l-{\rm Prox}_{h(\cdot)}\left(u^l+\alpha^l\right)\|}{1+\|u^l\|},\frac{\|x^l-{\rm Prox}_{\lambda \|\cdot\|_1}\left(x^l+A^\top\alpha^l\right)\|}{1+\|x^l\|},\frac{\|u^l-b+Ax^l\|}{1+\|u^l\|}\right\};
	\end{split}
\end{equation*} the function value of \eqref{eq19} (val); the $L_1$ estimation error $||x^l-x^*||_1$; the $L_2$ estimation error $||x^l-x^*||_2$; the model error ME$=\left(x^l-x^*\right)^{\top}\Sigma_X\left(x^l-x^*\right)$ where $\Sigma_X$ is the population covariance matrix of $X$; the number of false positive variables FP$=|I(x^l)\bigcap \bar{I}(x^*)|$; the number of false negative variables FN$=|\bar{I}(x^l)\bigcap I(x^*)|$; the correct component ratio CR$=\frac{|I^l\bigcap I(\hat{x})|}{|I^l|}$, where $\hat{x}$ is the solution of the original problem by taking $I^0=\left\{1,2,\dots,p\right\}$; the running time in second ($\hat{t}$).
Moreover, we additionally report the number of iterations in Algorithm \ref{alg1} (It-AS), the total number of iterations in Algorithm \ref{alg2} (It-PPA), the total number of iterations in Algorithm \ref{alg3} (It-ALM), the total number of iterations in Algorithm \ref{alg4} (It-SSN) and the total time of SSN ($t_{SSN}$).
\subsection{Role of Parameters}\label{section5.2}
{\bf The number of components added to $I^l$.} In Algorithm \ref{alg1}, the number of components we choose to add to $I^l$ in each iteration may affect the performance of the algorithm. We first fix the maximum number of added components (denoted as $M$) as 25, that is, if the sieved components are less than or equal to 25, then all are added. We use $m$ to represent the number of components added to $I^l$ each time. We conduct experiments on E2 with $n=200,\ p=1000, \varepsilon$ of (i) for $m=\left\{\frac{p}{200},\frac{p}{100},\frac{3p}{200},\frac{p}{50}\right\}$, that is, $m\in\left\{5,10,15,20\right\}$. The results are shown in Figure \ref{fig-1} and Table \ref{tab-num-7}. We can see from Figure \ref{fig-1} that with the iteration of AS, new components are gradually added to $I^l$, and the relative KKT residuals are getting smaller and smaller. Furthermore, $I^l$ gradually covers $I(\hat{x})$. For the different numbers of components added each time, the relative KKT residuals decline differently. Obviously, the larger $m$ is, the faster the decline is. For the comparison of CR, we can see that the smaller $m$ is, the higher the CR will be. From $t_{SSN}$, we can see that $t_{SSN}$ grows in the slowest way if $m$ takes the smallest value. However, because the number of AS iterations increases, the overall time of SSN may be longer. The results in Table \ref{tab-num-7} show the overall performance for the four values of $m$. It can be seen that four choices of $m$ lead to similar quality of output whereas $m=10$ leads to the smallest time. Therefore, we choose $m=\frac{p}{100}$ in our following test. 
\begin{figure}[!htb]
	\vspace{5pt}
	\centering
	\begin{minipage}[t]{.45\linewidth}
		\includegraphics[width=1\textwidth]{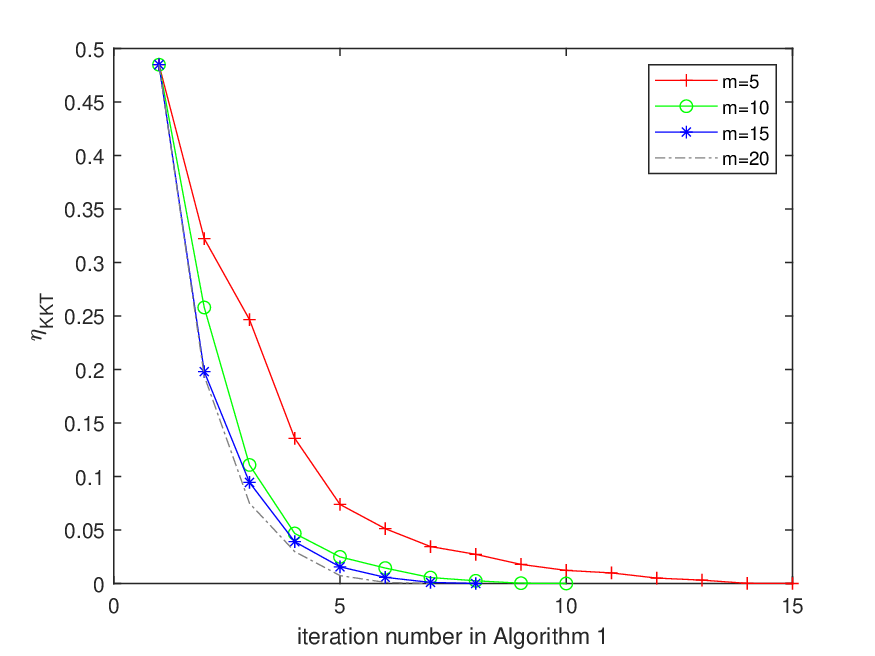}
	\end{minipage}
	\begin{minipage}[t]{.45\linewidth}
		\includegraphics[width=1\textwidth]{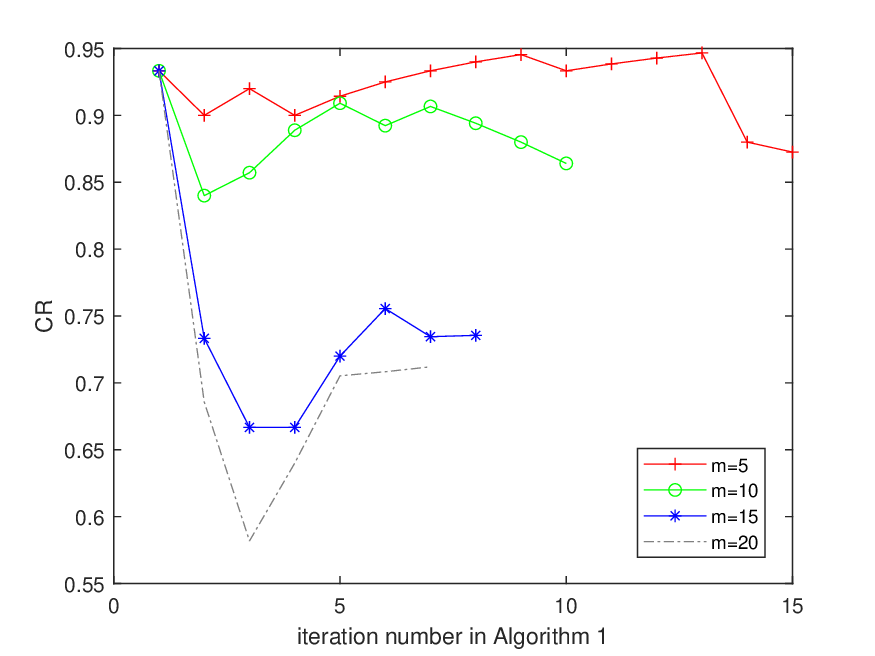}
	\end{minipage}
	\vfill
	\begin{minipage}[t]{.45\linewidth}
		\includegraphics[width=1\textwidth]{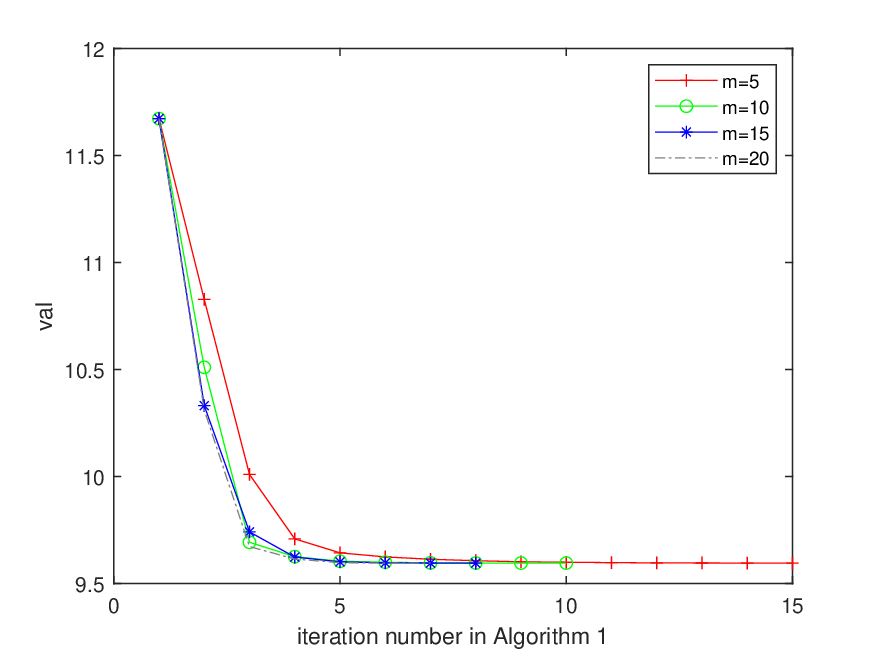}
	\end{minipage}
	\begin{minipage}[t]{.45\linewidth}
		\includegraphics[width=1\textwidth]{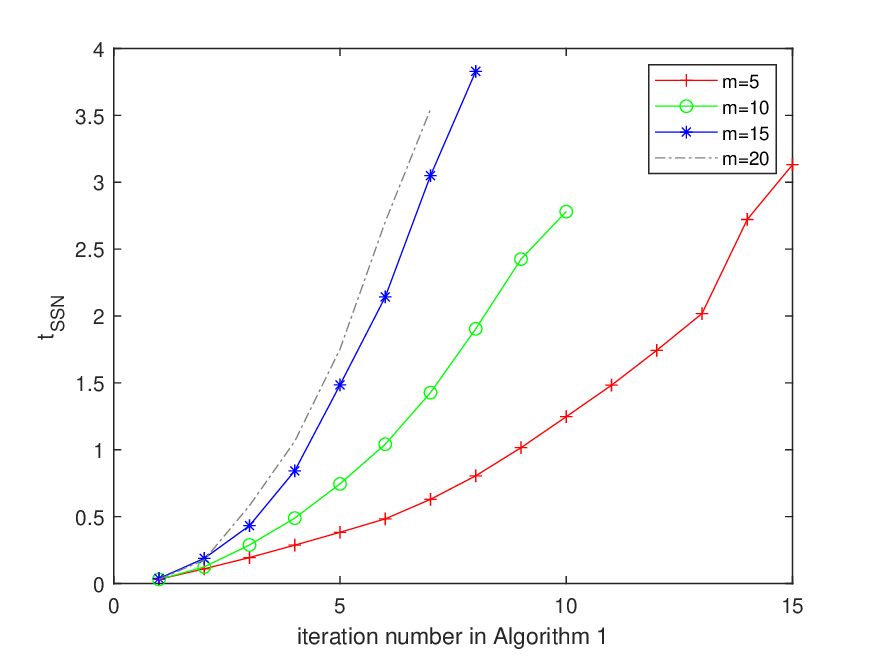}
	\end{minipage}
	\caption{$\eta_{KKT}$, CR, val and $t_{SSN}$ for different $m$'s in AS-PPA on E2.}\label{fig-1}
\end{figure}

\begin{table}[!htb]
	\centering
	\caption{Results for different $m$'s in AS-PPA on E2 ($n=200,\ p=1000$).}
			\begin{tabular}{|c|c| c|c| c| c| c| c| c|c|c|c|c|c|}
				\hline
				\makebox[0.005\textwidth][c]{$m$} &
				\makebox[0.03\textwidth][c]{$\eta_{KKT}$}& 
				\makebox[0.02\textwidth][c]{$val$}& 
				\makebox[0.02\textwidth][c]{$L_1$} & 
				\makebox[0.02\textwidth][c]{$L_2$}& 
				\makebox[0.02\textwidth][c]{ME} &
				\makebox[0.01\textwidth][c]{FP}& 
				\makebox[0.01\textwidth][c]{FN}& 
				\makebox[0.02\textwidth][c]{$\hat{t}(s)$}& 
				\makebox[0.03\textwidth][c]{It-AS}& 
				\makebox[0.05\textwidth][c]{It-PPA}& 
				\makebox[0.05\textwidth][c]{It-ALM}& 
				\makebox[0.05\textwidth][c]{It-SSN}& 
				\makebox[0.03\textwidth][c]{$t_{SSN}$} \\
				\hline
				5&4.9-7&9.60&4.42&0.68&0.15&64&0&3.24&15&239&829&4343&3.13
				\\
				\hline
				10&9.5-7&9.60&4.42&0.68&0.15&64&0&\textbf{2.87}&10&211&665&3438&\textbf{2.78}
				\\
				\hline 
				15&3.8-7&9.60&4.42&0.68&0.15&64&0&3.93&8&191&537&3526&3.83
				\\ 
				\hline
				20&6.0-7&9.60&4.42&0.68&0.15&64&0&3.63&7&132&460&3318&3.55\\ 
				
				\hline
			\end{tabular}\label{tab-num-7}
\end{table}

{\bf The role of the maximum number added to $I^{l}$ (denoted as $M$).} We still test on E2. That is, $n=200,\ p=1000$, and $M\in\left\{\frac{p}{40},\frac{p}{20},\frac{3p}{40},\frac{p}{10}\right\}=\left\{25,50,75,100\right\}$. The results are reported in Figure \ref{fig-2} and Table \ref{tab-num-8}. We can see from Figure \ref{fig-2} that in the first five iterations of AS, the results of the four choices are almost the same. This is because the maximum number of additions is not reached. In this case, only 10 components are added each time, so the results of the four choices of $M$ are the same. From iteration $l=6$, different $M$'s lead to different CR's and $t_{SSN}$'s. Specifically, a higher value of $M$ results in lower residuals $\eta_{KKT}$ as well as lower CR's, but it also requires more cputime, as indicated by $t_{SSN}$. On the other hand, different values of $M$ hardly affect the function value. From Table \ref{tab-num-8}, one can see that $M=25$ is a reasonable choice compared with other choices since it leads to less cputime. Therefore, in our following test, we choose $M=\frac{p}{40}$.  
\begin{figure}[!htb]
	\vspace{5pt}
	\centering
	\begin{minipage}[t]{.45\linewidth}
		\includegraphics[width=1\textwidth]{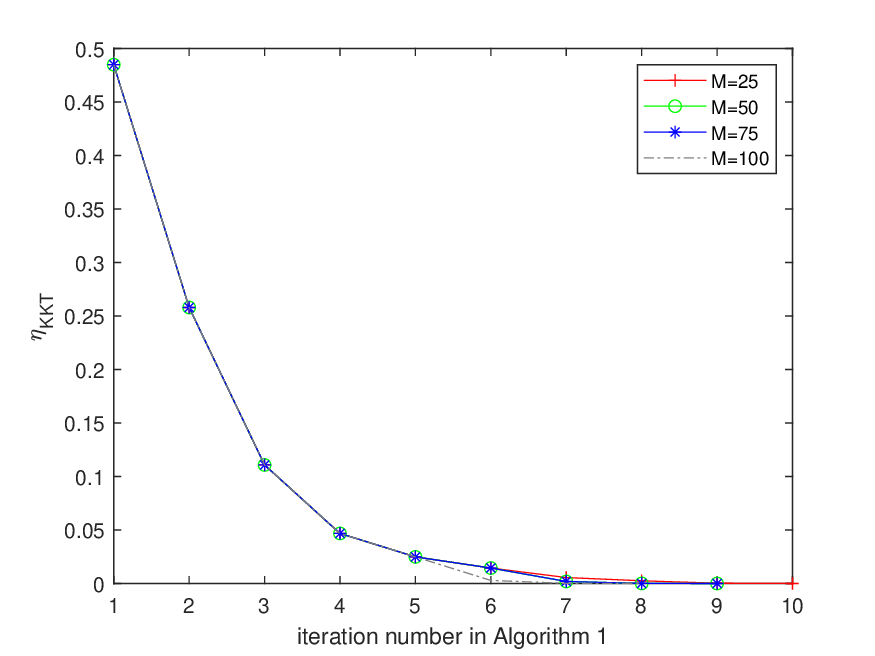}
	\end{minipage}
	\begin{minipage}[t]{.45\linewidth}
		\includegraphics[width=1\textwidth]{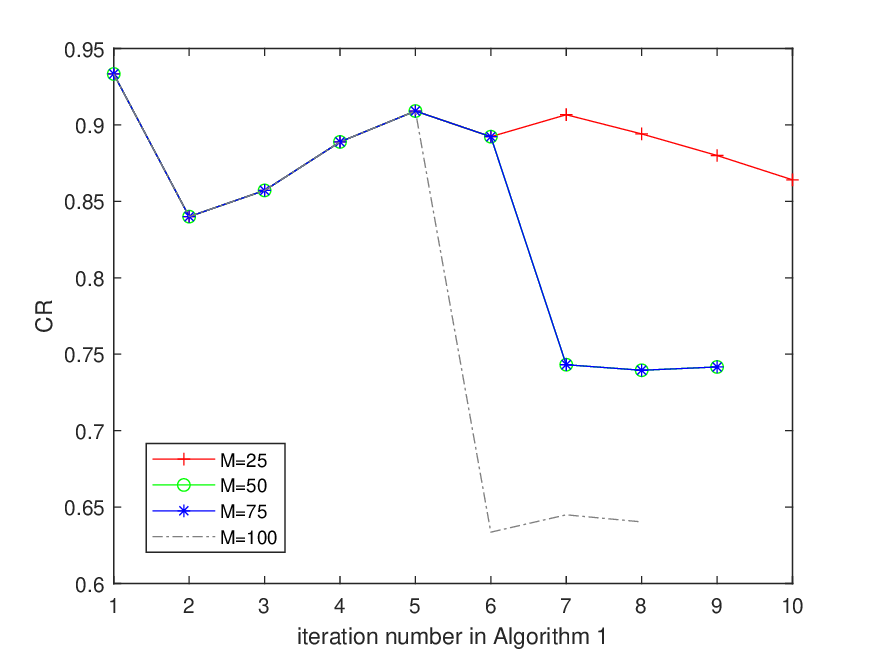}
	\end{minipage}
	\vfill
	\begin{minipage}[t]{.45\linewidth}
		\includegraphics[width=1\textwidth]{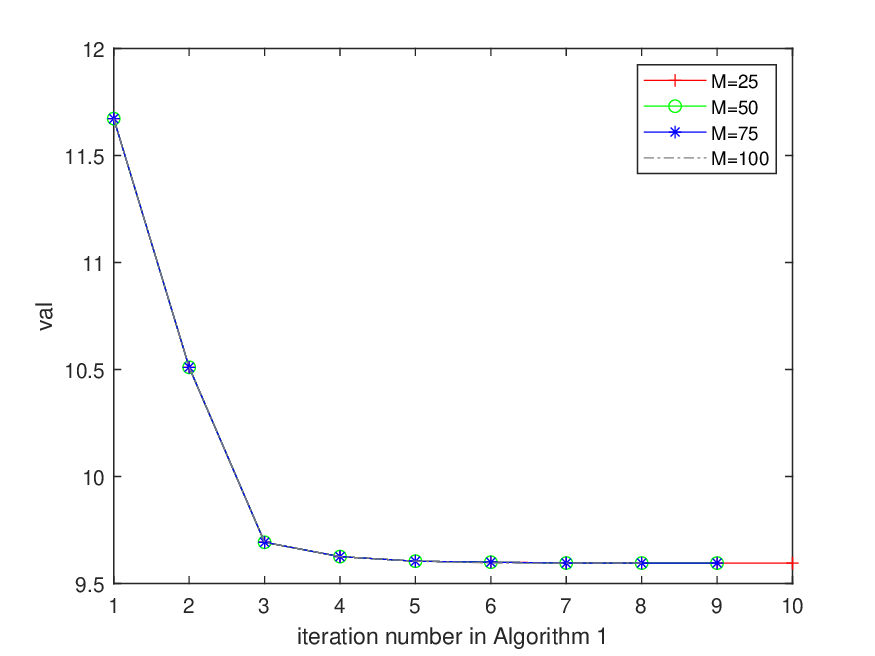}
	\end{minipage}
	\begin{minipage}[t]{.45\linewidth}
		\includegraphics[width=1\textwidth]{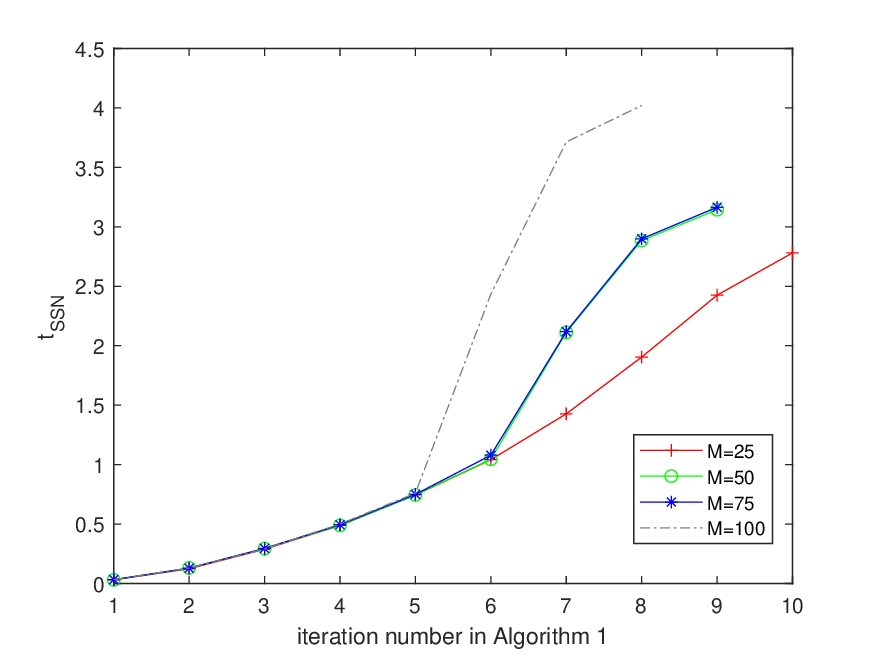}
	\end{minipage}
	\caption{$\eta_{KKT}$, CR, val and $t_{SSN}$ for different $M$'s in AS-PPA on E2.}\label{fig-2}
\end{figure}

\begin{table}[!htb]
	\centering
	\caption{Results for $M=\left\{25,50,75,100\right\}$ in AS-PPA on E2 ($n=200,\ p=1000$).}
			\begin{tabular}{|c|c| c|c| c| c| c| c| c|c|c|c|c|c|}\hline
				\makebox[0.005\textwidth][c]{M} &
				\makebox[0.03\textwidth][c]{$\eta_{KKT}$}& 
				\makebox[0.01\textwidth][c]{$val$}& 
				\makebox[0.01\textwidth][c]{$L_1$} & 
				\makebox[0.01\textwidth][c]{$L_2$}& 
				\makebox[0.01\textwidth][c]{ME} &
				\makebox[0.01\textwidth][c]{FP}& 
				\makebox[0.01\textwidth][c]{FN}& 
				\makebox[0.01\textwidth][c]{$\hat{t}(s)$}& 
				\makebox[0.03\textwidth][c]{It-AS}& 
				\makebox[0.05\textwidth][c]{It-PPA}& 
				\makebox[0.05\textwidth][c]{It-ALM}& 
				\makebox[0.05\textwidth][c]{It-SSN}& 
				\makebox[0.01\textwidth][c]{$t_{SSN}$} \\
				\hline
				25&9.5-7&9.60&4.42&0.68&0.15&64&0&\textbf{2.87}&10&211&665&3438&\textbf{2.78}
				\\
				\hline
				50&6.4-7&9.60&4.42&0.68&0.15&64&0&3.24&9&155&573&3132&3.14
				\\
				\hline 
				75&6.4-7&9.60&4.42&0.68&0.15&64&0&3.26&9&155&573&3132&3.16
				\\ 
				\hline
				100&7.6-7&9.60&4.42&0.68&0.15&63&0&4.10&8&163&515&3028&4.02
				\\ 
				\hline
			\end{tabular}\label{tab-num-8}
\end{table}

{\bf The initial points in the subproblem.} For the initial point $x^{l+1}$ in the subproblem, notice that $x^l$ has been obtained in the last iteration, so we set $x^{l+1}_I=x^l$. For the added index set $J$, we compare four different options $x^{l+1}_J=\left\{0,\frac{1}{n},\frac{1}{|I_{l+1}|},\frac{1}{|I_{J}|}\right\}$. The results for four different initial points are in Table \ref{tab-num-5}. From the results, we can see that there is no big difference between the four initial points in the effect of the solution, for example, $L_1$, $L_2$, etc. have basically no difference. 
Considering the total time of SSN, the method of taking 0 as the initial point will be faster in time. Therefore, we take 0 as the initial points for the following test.
	\begin{table}[!htb]
		\centering
		\caption{Results for four different initial points ($n=200,\ p=1000$).}
				\begin{tabular}{|c|c| c|c| c| c| c| c| c|c|c|c|c|c|}\hline
					\makebox[0.005\textwidth][c]{$x^{l+1}_J$} &
					\makebox[0.03\textwidth][c]{$\eta_{KKT}$}& 
					\makebox[0.01\textwidth][c]{$val$}& 
					\makebox[0.01\textwidth][c]{$L_1$} & 
					\makebox[0.01\textwidth][c]{$L_2$}& 
					\makebox[0.01\textwidth][c]{ME} &
					\makebox[0.01\textwidth][c]{FP}& 
					\makebox[0.01\textwidth][c]{FN}& 
					\makebox[0.01\textwidth][c]{$\hat{t}(s)$}& 
					\makebox[0.03\textwidth][c]{It-AS}& 
					\makebox[0.05\textwidth][c]{It-PPA}& 
					\makebox[0.05\textwidth][c]{It-ALM}& 
					\makebox[0.05\textwidth][c]{It-SSN}& 
					\makebox[0.01\textwidth][c]{$t_{SSN}$} \\
					
					\hline
					$0$&9.1-7&9.60&4.42&0.68&0.15&64&0&\textbf{2.87}&10&211&665&3438&\textbf{2.78}
					\\
					\hline
					$\frac{1}{n}$&9.1-7&9.60&4.42&0.68&0.15&64&0&3.08&10&190&656&3386&2.98
					\\
					\hline 
					$\frac{1}{|I_{l+1}|}$&9.1-7&9.60&4.42&0.68&0.15&64&0&3.42&10&191&657&3442&3.30
					
					\\ 
					\hline
					$\frac{1}{|I_{J}|}$&9.1-7&9.60&4.42&0.68&0.15&64&0&4.00&10&162&572&4496&3.91
					\\
					\hline
				\end{tabular}\label{tab-num-5}
	\end{table}

{\bf Comparison between eaAS-PPA, eAS-PPA and AS-PPA.} 
To evaluate the performance of the enhanced technique, we compare the following three versions, eaAS-PPA, eAS-PPA and AS-PPA, where eaAS-PPA is the enhanced all AS-PPA that we initially utilize this approach to recompute the optimal values of the Lagrange multipliers; eAS-PPA is the enhanced AS-PPA that we choose to employ the enhanced technique only when approaching the optimal solution of the original problem, that is, $h(x^l)+\lambda \|x^l\|_1-h(x^{l-1})-\lambda \|x^{l-1}\|_1< 1.0\times 10^{-4}\epsilon$. We conduct 50 experiments on E2 with $n=200$ and $p=1000$. The results are presented in Table \ref{tab-num-12}. It can be observed that the quality of solutions obtained by the three methods are almost the same. This is evidenced by the fact that, apart from slight differences in $\eta_{KKT}$, other values such as $L_1$ and $L_2$ are equal. Moreover, the number of iterations in Algorithm \ref{alg1} are also identical, indicating that resolving the multiplier only slightly improves the sieving strategy. This might be caused by the fact that the function $h$ lacks sparsity. According to the definition in \ref{app1}, most components $(\partial h)_i$ in $\partial h$ are single-valued. Even for those components where $(\partial h)_i$ is a set, too few equal components in the vector $u$ results in a small size for the set $(\partial h)_i$. Consequently, the difference between the optimal Lagrange multipliers and those obtained from the subproblem is small. Given that the enhanced technique introduces additional computation cost due to the quadratic programming solver, AS-PPA surprisingly exhibits the shortest computation time. Even eAS-PPA, which involves fewer quadratic programming solver iterations, tends to be faster than eaAS-PPA. Therefore, we use AS-PPA in our following experiments.
	\begin{table}[!htb]
	\centering
	\caption{Results on E2 in different AS-PPA ($n=200,\ p=1000$).}
	\begin{tabular}{|c|r|c|c| c| c| c| c| c|c|c|c|c|c|}
		\hline
		\makebox[0.05\textwidth][c]{Error} &
		\mc{1}{c|}{Method} &
		\makebox[0.02\textwidth][c]{$\eta_{KKT}$}& 
		\makebox[0.005\textwidth][c]{$val$}& 
		\makebox[0.01\textwidth][c]{$L_1$} & 
		\makebox[0.01\textwidth][c]{$L_2$}& 
		\makebox[0.01\textwidth][c]{ME} &
		\makebox[0.01\textwidth][c]{FP}& 
		\makebox[0.01\textwidth][c]{FN}& 
		\makebox[0.01\textwidth][c]{$\hat{t}(s)$}& 
		\makebox[0.03\textwidth][c]{It-AS}& 
		\makebox[0.05\textwidth][c]{It-PPA}& 
		\makebox[0.05\textwidth][c]{It-ALM}& 
		\makebox[0.05\textwidth][c]{It-SSN} \\
		\hline
		\multirow{3}*{$N\left(0,0.25\right)$}&AS-PPA&7.8-7&9.5967&6.12&0.88&0.18&70.0&0.2&\textbf{5.1}&12.4&223.2&536.0&5132.7
		\\
		~&eAS-PPA&8.0-7&9.5967&6.12&0.88&0.18&70.0&0.2&5.2&12.4&222.3&543.0&5131.4
		\\
		~&eaAS-PPA&7.6-7&9.5967&6.12&0.88&0.18&70.0&0.2&6.9&12.4&218.5&523.0&5120.0
		\\
		\hline
		\multirow{3}*{$N\left(0,1\right)$}&AS-PPA&7.5-7&9.9378&11.48&1.66&0.69&68.4&1.8&\textbf{4.8}&11.9&209.8&558.0&5283.7
		\\
		~&eAS-PPA&7.3-7&9.9378&11.48&1.66&0.69&68.4&1.8&5.8&11.9&207.8&558.0&5283.4
		\\
		~&eaAS-PPA&7.0-7&9.9378&11.48&1.66&0.69&68.4&1.8&6.4&11.9&207.0&544.0&5290.1
		\\
		\hline
		\multirow{3}*{$N\left(0,2\right)$}&AS-PPA&6.9-7&10.2303&15.37&2.24&1.32&68.1&3.0&\textbf{4.8}&11.6&206.4&610.0&5370.3
		\\
		~&eAS-PPA&7.3-7&10.2303&15.37&2.24&1.32&68.1&3.0&5.8&11.6&207.9&610.0&5367.5
		\\
		~&eaAS-PPA&7.3-7&10.2303&15.37&2.24&1.32&68.1&3.0&6.2&11.6&209.4&619.0&5365.5
		\\
		\hline
		\multirow{3}*{$MN$}&AS-PPA&6.9-7&9.9725&12.16&1.76&0.78&68.0&2.0&\textbf{4.7}&11.6&202.2&561.0&5267.0
		\\
		~&eAS-PPA&6.7-7&9.9725&12.16&1.76&0.78&68.0&2.0&5.7&11.6&202.3&561.0&5270.3
		\\
		~&eaAS-PPA&6.8-7&9.9725&12.16&1.76&0.78&68.0&2.0&6.2&11.5&199.6&616.0&5264.6
		\\
		\hline
		\multirow{3}*{$\surd 2t_4$}&AS-PPA&6.6-7&10.5935&18.58&2.71&2.07&65.7&4.2&\textbf{4.3}&11.1&193.6&489.0&5146.6
		\\
		~&eAS-PPA&6.5-7&10.5935&18.58&2.71&2.07&65.7&4.2&4.6&11.1&193.6&489.0&5148.6
		\\
		~&eaAS-PPA&6.5-7&10.5935&18.58&2.71&2.07&65.7&4.2&5.7&11.1&197.1&490.0&5149.2
		\\
		\hline
		\multirow{3}*{Cauthy}&AS-PPA&6.1-7&18.1334&26.03&3.81&5.03&62.8&7.1&\textbf{3.9}&10.5&194.8&531.0&5112.7
		\\
		~&eAS-PPA&6.3-7&18.1334&26.03&3.81&5.03&62.8&7.1&4.5&10.5&194.3&529.0&5107.0
		\\
		~&eaAS-PPA&6.8-7&18.1334&26.03&3.81&5.03&62.8&7.1&5.1&10.5&196.0&508.0&5068.3
		\\
		\hline
	\end{tabular}\label{tab-num-12}
\end{table}

{\bf Effect of Sieving strategy.} In order to visualize the effect of our sieving strategy, we draw the following diagram for $m=10,\ M=25$ on E2. In this way, we can observe whether the added components gradually cover $I(\hat{x})$ after each iteration of AS. In Figure \ref{fig-3}, the white circles represent the nonzero components of the true solution $\hat{x}$, and the black circles represent the components selected in each sieving. We can find that as the number of iterations increases, the white circles are gradually covered. This indicates that AS gradually selects the real nonzero components. It verifies that our AS strategy can effectively sieve the components that are nonzero after adding new components each time.

\begin{figure}[!htb]
	\vspace{5pt}
	\centering
	\begin{minipage}[t]{1.0\linewidth}
		\includegraphics[width=1\textwidth]{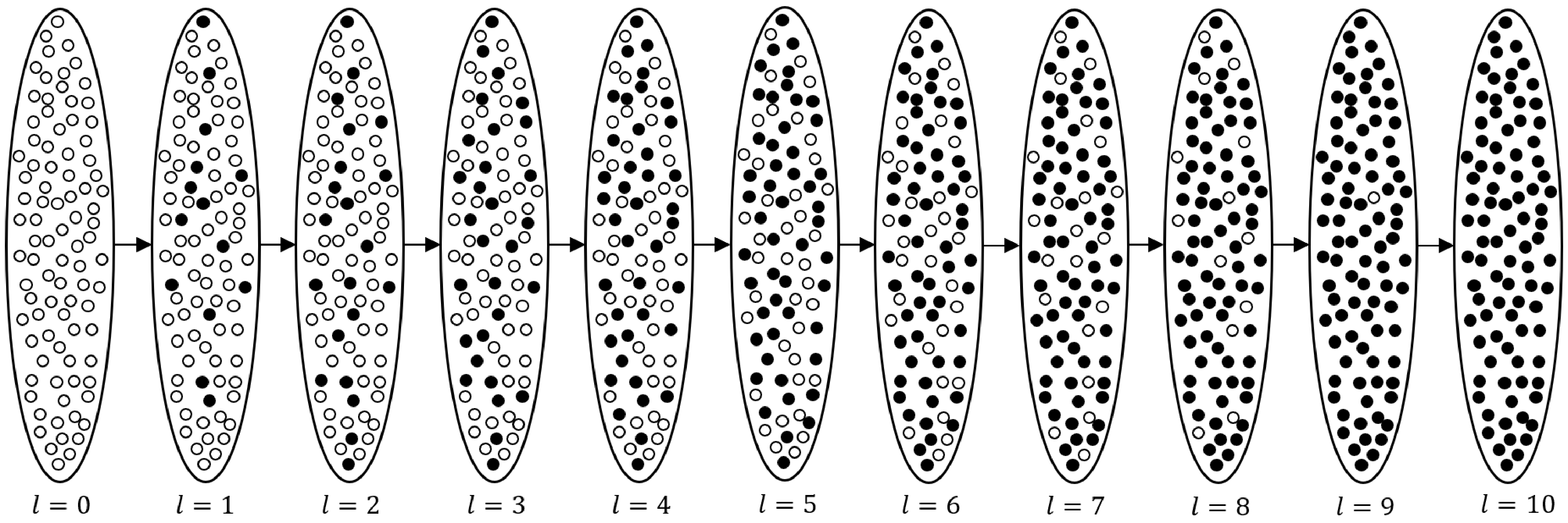}
	\end{minipage}
	\caption{Schematic diagram of sieving effect in AS-PPA.}\label{fig-3}
\end{figure}
\subsection{Comparison with Other Methods}\label{section5.3}
In this part, we compare Algorithm \ref{alg1} with other methods for \eqref{eq19}, including Gurobi for the linear programming formulation of \eqref{eq19}, Barrodale-Roberts modified simplex algorithm \cite{BR} (BR) for LAD formulation of \eqref{eq19} and the iteratively reweighted least squares (IRWLS). The following linear programming formulation is used to solve \eqref{eq19} \cite[Page 1709]{Wang}:
\begin{equation}\label{eq7}
	\begin{split}
		\min\limits_{\xi,\eta}&\left\{2[n\left(n-1\right)]^{-1}\mathop{\sum\sum}\limits_{i\neq j}\left(\xi_{ij}^++\xi_{ij}^-\right)+\lambda\sum_{k=1}^{p}\eta_k\right\}\\
		{\rm s.t.}\ &\xi_{ij}^+-\xi_{ij}^-=\left(b_i-b_j\right)-\left(a_i-a_j\right)^\top x,\ i,j=1,2,\dots,n,\ i\neq j,\\
		&\xi_{ij}^+\ge 0,\ \xi_{ij}^-\ge 0,\ i,j=1,2,\dots,n,i\neq j,\\
		&\eta_k\ge x_k,\ \eta_k\ge -x_{k},\ k=1,2,\dots,p.
	\end{split}
\end{equation}
We use the Gurobi 9.5.1 package\footnote[1]{https://www.gurobi.com/} to solve \eqref{eq7}\footnote[2]{The 'deterministic concurrent' method is used in Gurobi.}. There are $\frac{n(n-1)}{2}$ equality constraints, $n(n-1)$ box constraints, $2p$ inequality constraints, and the dimension of variables is $n(n-1)+2p$. For LAD, we use the Fortran code in \cite{BRcode}. For IRWLS, we use IRWLS provided in the RobustSP toolbox\footnote[3]{https://github.com/RobustSP/toolbox} by Zoubir et al. \cite{code}.

Table \ref{tab-num-1} and \ref{tab-num-1-1} summarizes the results for the small-scale test problems with $n=100,\ p=400$. All the results are taken average over 50 random runs. From Table \ref{tab-num-1}, we can see that AS-PPA, BP and Gurobi return almost the same solutions due to almost the same function values in $L_1,\ L_2$, ME, FP and FN. For IRWLS, FN is significantly different from those given by AS-PPA and Gurobi, indicating that the returned solutions by IRWLS may provide extra false negative components. In terms of cputime, both AS-PPA and Gurobi are much faster than IRWLS and BR, whereas AS-PPA is the fastest one. This can be explained by the fact that BR, Gurobi and IRWLS do not explore the sparsity of solutions. Furthermore, the linear programming problem contains 4950 equality constraints, 9900 box constraints, 800 inequality constraints, and variable dimension is 10700. In BR algorithm, the LAD problem is an $L_1$ minimization problem on a 5350-dimensional vector. For our AS-PPA, there are 900 variables and 500 equality constraints. This also explains why AS-PPA is faster than Gurobi and BR.
\begin{table}[!htb]
		\caption{Results on E1 ($n=100$, $p=400$).}
		 \label{tab-num-1}
					\begin{tabular}{|c|c| c|r| r|c|c| r| c| r|}
					\hline
						\makebox[0.04\textwidth][c]{$\Sigma$}&
						\makebox[0.05\textwidth][c]{Error} &
						\makebox[0.05\textwidth][c]{Method}&
						\mc{1}{c|}{val}
						&
						\mc{1}{c|}{$L_1$}
						& 
						\makebox[0.05\textwidth][c]{$L_2$}& 
						\makebox[0.05\textwidth][c]{ME} &
						\mc{1}{c|}{FP}
						& 
						\makebox[0.05\textwidth][c]{FN}& 
						\mc{1}{c|}{$\hat{t}(s)$}\\
						\hline
						\multirow{29}*{$\Sigma_3$}&\multirow{4}*{$N\left(0,0.25\right)$} &AS-PPA&2.7125&0.77$\ $&0.34&0.08&\textbf{8.4}&0.0&\textbf{0.1}\\
						~&~&Gurobi&2.7125&0.77$\ $&0.34&0.08&\textbf{8.4}&0.0&4.2\\
						~&~&IRWLS&2.7125&0.77$\ $&0.34&0.08&27.3&0.0&67.1\\
						~&~&BR&2.7125&0.77$\
						$&0.34&0.08&\textbf{8.4}&0.0&21.5\\
						\cmidrule{2-10}
						~&\multirow{4}*{$N\left(0,1\right)$} &AS-PPA&\textbf{3.1889}&1.53$\ $&0.68&0.32&\textbf{8.5}&0.0&\textbf{0.1}\\
						~&~&Gurobi&\textbf{3.1889}&1.53$\ $&0.68&0.32&\textbf{8.5}&0.0&4.1\\
						~&~&IRWLS&3.1890&1.53$\ $&0.68&0.32&39.9&0.0&66.7\\
						~&~&BR&\textbf{3.1889}&1.53$\ $&0.68&0.32&\textbf{8.5}&0.0&20.6\\
						\cmidrule{2-10}
						~&\multirow{4}*{$N\left(0,2\right)$} &AS-PPA&\textbf{3.5836}&2.17$\ $&0.97&0.65&\textbf{8.5}&0.0&\textbf{0.1}\\
						~&~&Gurobi&\textbf{3.5836}&2.17$\ $&0.97&0.65&\textbf{8.5}&0.0&4.4\\
						~&~&IRWLS&3.5837&2.17$\ $&0.97&0.65&51.6&0.0&72.9\\
						~&~&BR&\textbf{3.5836}&2.17$\ $&0.97&0.65&\textbf{8.5}&0.0&20.1\\
						\cmidrule{2-10}
						~&\multirow{4}*{$MN$}&AS-PPA&\textbf{3.2537}&1.64$\ $&0.74&0.38&\textbf{8.2}&0.0&\textbf{0.1}\\
						~&~&Gurobi&\textbf{3.2537}&1.64$\ $&0.74&0.38&\textbf{8.2}&0.0&4.3\\
						~&~&IRWLS&3.2538&1.64$\ $&0.74&0.38&44.9&0.0&72.1\\
						~&~&BR&\textbf{3.2537}&1.64$\ $&0.74&0.38&\textbf{8.2}&0.0&20.4\\
						\cmidrule{2-10}
						~&\multirow{4}*{$\surd 2t_4$} &AS-PPA&\textbf{4.0392}&2.62$\ $&1.17&0.95&\textbf{8.6}&0.0&\textbf{0.1}\\
						~&~&Gurobi&\textbf{4.0392}&2.62$\ $&1.17&0.95&\textbf{8.6}&0.0&4.3\\
						~&~&IRWLS&4.0393&2.62$\ $&1.17&0.95&61.5&0.0&73.3\\
						~&~&BR&\textbf{4.0392}&2.62$\ $&1.17&0.95&\textbf{8.6}&0.0&20.3\\
						\cmidrule{2-10}
						~&\multirow{4}*{Cauthy}&AS-PPA&\textbf{14.1917}&3.44$\ $&1.53&1.84&\textbf{7.5}&0.1&\textbf{0.2}\\
						~&~&Gurobi&\textbf{14.1917}&3.44$\ $&1.53&1.84&\textbf{7.5}&0.1&4.5\\
						~&~&IRWLS&14.1918&3.44$\ $&1.53&1.84&81.7&\textbf{0.0}&72.5\\
						~&~&BR&\textbf{14.1917}&3.44$\ $&1.53&1.84&\textbf{7.5}&0.1&18.8\\
						\hline
					\end{tabular}
				\end{table}
			
\begin{table}[!htb]
		\caption{Results on E2 - E3 ($n=100$, $p=400$).} 
		\label{tab-num-1-1}
		\begin{tabular}{|c|c| c|r| r|c|c| r| c| r|}\hline
				\makebox[0.04\textwidth][c]{$\Sigma$}&
				\makebox[0.05\textwidth][c]{Error} &
				\makebox[0.05\textwidth][c]{Method}&
				\mc{1}{c|}{val}
				&
				\mc{1}{c|}{$L_1$}
				& 
				\makebox[0.05\textwidth][c]{$L_2$}& 
				\makebox[0.05\textwidth][c]{ME} &
				\mc{1}{c|}{FP}
				& 
				\makebox[0.05\textwidth][c]{FN}& 
				\mc{1}{c|}{$\hat{t}(s)$}\\
				\hline
				\mc{10}{|c|}{E2}\\
				\hline
				\multirow{29}*{$\Sigma_3$}&\multirow{4}*{$N\left(0,0.25\right)$} &AS-PPA&\textbf{12.5636}&\textbf{15.79}&2.49&0.60&45.4&3.2&\textbf{3.5}\\
				~&~&Gurobi&\textbf{12.5636}&\textbf{15.79}&2.49&0.60&45.4&3.2&5.8\\
				~&~&IRWLS&12.5638&15.80&2.49&0.60&45.4&3.2&71.4\\
				~&~&BR&\textbf{12.5636}&\textbf{15.79}&2.49&0.60&45.4&3.2&583.9\\
				\cmidrule{2-10}
				~&\multirow{4}*{$N\left(0,1\right)$} &AS-PPA&\textbf{12.7503}&\textbf{20.87}&\textbf{3.32}&1.43&\textbf{44.5}&5.1&\textbf{3.9}\\
				~&~&Gurobi&\textbf{12.7503}&\textbf{20.87}&\textbf{3.32}&1.43&\textbf{44.5}&5.1&5.4\\
				~&~&IRWLS&12.7504&20.89&3.33&1.43&44.8&5.1&71.3\\
				~&~&BR&\textbf{12.7503}&\textbf{20.87}&\textbf{3.32}&1.43&\textbf{44.5}&5.1&430.6\\
				\cmidrule{2-10}
				~&\multirow{4}*{$N\left(0,2\right)$} &AS-PPA&\textbf{12.9379}&\textbf{24.09}&\textbf{3.86}&\textbf{2.33}&\textbf{42.8}&6.4&\textbf{2.4}\\
				~&~&Gurobi&\textbf{12.9379}&\textbf{24.09}&\textbf{3.86}&\textbf{2.33}&42.9&6.4&5.4\\
				~&~&IRWLS&12.9380&24.10&3.87&2.34&43.0&6.4&71.5\\
				~&~&BR&\textbf{12.9379}&\textbf{24.09}&\textbf{3.86}&\textbf{2.33}&42.9&6.4&381.4\\
				\cmidrule{2-10}
				~&\multirow{4}*{$MN$}&AS-PPA&\textbf{12.7426}&\textbf{21.51}&\textbf{3.41}&1.62&\textbf{44.6}&5.4&\textbf{2.5}\\
				~&~&Gurobi&\textbf{12.7426}&\textbf{21.51}&\textbf{3.41}&1.62&\textbf{44.6}&5.4&5.6\\
				~&~&IRWLS&12.7427&21.52&3.42&1.62&44.8&5.4&71.5\\
				~&~&BR&\textbf{12.7426}&\textbf{21.51}&\textbf{3.41}&1.62&\textbf{44.6}&5.4&441.1\\
				\cmidrule{2-10}
				~&\multirow{4}*{$\surd 2t_4$} &AS-PPA&\textbf{13.2316}&26.61&4.31&3.40&41.1&7.8&\textbf{2.1}\\
				~&~&Gurobi&\textbf{13.2316}&26.61&4.31&3.40&\textbf{41.0}&7.8&5.1\\
				~&~&IRWLS&13.2317&26.61&4.31&3.40&41.1&7.8&70.9\\
				~&~&BR&\textbf{13.2316}&26.61&4.31&3.40&\textbf{41.0}&7.8&281.8\\
				\cmidrule{2-10}
				~&\multirow{4}*{Cauthy}&AS-PPA&\textbf{22.9788}&\textbf{34.29}&5.66&9.30&35.8&12.1&\textbf{1.4}\\
				~&~&Gurobi&\textbf{22.9788}&\textbf{34.29}&5.66&9.30&35.8&12.1&4.9\\
				~&~&IRWLS&22.9789&34.30&5.66&9.30&\textbf{35.7}&12.1&71.3\\
				~&~&BR&\textbf{22.9788}&\textbf{34.29}&5.66&9.30&35.8&12.1&161.5\\
				\hline
				\mc{10}{|c|}{E3}\\
				\hline
				\multirow{14}*{$\Sigma_1$}&\multirow{4}*{$N\left(0,1\right)$}&AS-PPA&\textbf{2.8645}&2.59$\ $&1.01&0.24&\textbf{11.2}&0.0&\textbf{0.1}\\
				~&~&Gurobi&\textbf{2.8645}&2.59$\ $&1.01&0.24&\textbf{11.2}&0.0&4.4\\
				~&~&IRWLS&2.8646&2.59$\ $&1.01&0.24&122.9&0.0&70.9\\
				~&~&BR&\textbf{2.8645}&2.59$\ $&1.01&0.24&\textbf{11.2}&0.0&24.7\\
				\cmidrule{2-10}
				~&\multirow{4}*{MN}&AS-PPA&\textbf{2.9327}&2.76$\ $&1.09&0.29&\textbf{11.5}&0.0&\textbf{0.1}\\
				~&~&Gurobi&\textbf{2.9327}&2.76$\ $&1.09&0.29&\textbf{11.5}&0.0&4.4\\
				~&~&IRWLS&2.9328&2.76$\ $&1.09&0.29&127.4&0.0&70.7\\
				~&~&BR&\textbf{2.9327}&2.76$\ $&1.09&0.29&\textbf{11.5}&0.0&24.7\\
				\cmidrule{2-10}
				~&\multirow{4}*{Cauthy}&AS-PPA&\textbf{13.9108}&5.33$\ $&2.06&1.10&\textbf{11.0}&0.3&\textbf{0.2}\\
				~&~&Gurobi&\textbf{13.9108}&5.33$\ $&2.06&1.10&\textbf{11.0}&0.3&4.5\\
				~&~&IRWLS&13.9109&5.33$\ $&2.06&1.10&141.1&\textbf{0.0}&70.9\\
				~&~&BR&\textbf{13.9108}&5.33$\ $&2.06&1.10&\textbf{11.0}&0.3&21.9\\
				\hline
				\multirow{14}*{$\Sigma_2$}&\multirow{4}*{$N\left(0,1\right)$}&AS-PPA&3.2881&1.24$\ $&0.71&0.58&\textbf{2.1}&0.0&\textbf{0.1}\\
				~&~&Gurobi&3.2881&1.24$\ $&0.71&0.58&\textbf{2.1}&0.0&4.8\\
				~&~&IRWLS&3.2881&1.24$\ $&0.71&0.58&4.8&0.0&72.0\\
				~&~&BR&3.2881&1.24$\ $&0.71&0.58&\textbf{2.1}&0.0&15.8\\
				\cmidrule{2-10}
				~&\multirow{4}*{MN}&AS-PPA&\textbf{3.3529}&1.34$\ $&0.76&0.67&\textbf{2.0}&0.0&\textbf{0.1}\\
				~&~&Gurobi&\textbf{3.3529}&1.34$\ $&0.76&0.67&\textbf{2.0}&0.0&4.8\\
				~&~&IRWLS&3.3530&1.34$\ $&0.76&0.67&5.4&0.0&72.5\\
				~&~&BR&\textbf{3.3529}&1.34$\ $&0.76&0.67&\textbf{2.0}&0.0&15.6\\
				\cmidrule{2-10}
				~&\multirow{4}*{Cauthy}&AS-PPA&\textbf{14.1796}&3.15$\ $&1.78&3.98&\textbf{1.8}&0.3&\textbf{0.1}\\
				~&~&Gurobi&\textbf{14.1796}&3.15$\ $&1.78&3.98&\textbf{1.8}&0.3&4.8\\
				~&~&IRWLS&14.1797&3.15$\ $&1.78&3.98&80.0&\textbf{0.0}&70.9\\
				~&~&BR&\textbf{14.1796}&3.15$\ $&1.78&3.98&\textbf{1.8}&0.3&14.9\\
				\hline
				\multirow{14}*{$\Sigma_3$}&\multirow{4}*{$N\left(0,1\right)$}&AS-PPA&\textbf{3.1889}&1.53$\ $&0.68&0.32&\textbf{8.5}&0.0&\textbf{0.1}\\
				~&~&Gurobi&\textbf{3.1889}&1.53$\ $&0.68&0.32&\textbf{8.5}&0.0&4.3\\
				~&~&IRWLS&3.1890&1.53$\ $&0.68&0.32&39.9&0.0&72.0\\
				~&~&BR&\textbf{3.1889}&1.53$\ $&0.68&0.32&\textbf{8.5}&0.0&20.7\\
				\cmidrule{2-10}
				~&\multirow{4}*{MN}&AS-PPA&\textbf{3.2537}&1.64$\ $&0.74&0.38&\textbf{8.2}&0.0&\textbf{0.1}\\
				~&~&Gurobi&\textbf{3.2537}&1.64$\ $&0.74&0.38&\textbf{8.2}&0.0&4.4\\
				~&~&IRWLS&3.2538&1.64$\ $&0.74&0.38&44.9&0.0&76.7\\
				~&~&BR&\textbf{3.2537}&1.64$\ $&0.74&0.38&\textbf{8.2}&0.0&21.1\\
				\cmidrule{2-10}
				~&\multirow{4}*{Cauthy}&AS-PPA&\textbf{14.1917}&3.44$\ $&1.53&1.84&\textbf{7.5}&0.1&\textbf{0.1}\\
				~&~&Gurobi&\textbf{14.1917}&3.44$\ $&1.53&1.84&\textbf{7.5}&0.1&4.5\\
				~&~&IRWLS&14.1918&3.44$\ $&1.53&1.84&81.7&\textbf{0.0}&72.6\\
				~&~&BR&\textbf{14.1917}&3.44$\ $&1.53&1.84&\textbf{7.5}&0.1&18.7\\
				\hline
\end{tabular}
\end{table}

To further investigate the cputime consumed by each method, we choose E2 as a typical example with error $N(0,0.25)$ for different $n$'s and $p=5n$. As we can see from Figure \ref{fig-4a}, the cputime taken by BR and IRWLS increases dramatically whereas Gurobi increases in a slower trend than the above two. In order to see the time comparison more clearly, we show the results of Figure \ref{fig-4a} in the Table \ref{tab-num-2}. In Table \ref{tab-num-2}, '--' indicates the cputime is more than one hour, that is after $n$ reaches 150, the cputime taken by BR exceeds one hour, while Gurobi and IRWLS exceed one hour after $n$ reaches 300. For those results that take more than an hour, we do not show them in Figure \ref{fig-4a}. In contrast, the time taken by AS-PPA increases the slowest as $n$ grows. Figure \ref{fig-4b} further zooms in to demonstrate the cputime taken by AS-PPA and Gurobi. As $n$ grows, it can be seen that the cputime by AS-PPA grows slowly whereas the cputime by Gurobi increases dramatically. For $n=250,\ p=1250$, AS-PPA takes less than 10 seconds which is much smaller than 150 seconds by Gurobi. The fast speed of AS-PPA can be further explained by the smaller scale of subproblems due to the efficient adaptive sieving strategy, as shown in Figure \ref{fig-5}. In Figure \ref{fig-5}, the size of each subproblem is much smaller (less than $11\%$), compared with the scale of the original problem. Due to the above observations, below we only compare with Gurobi.
	\begin{figure}[H]
	\centering
	\subfigure[]{
		\includegraphics[width=0.45\textwidth]{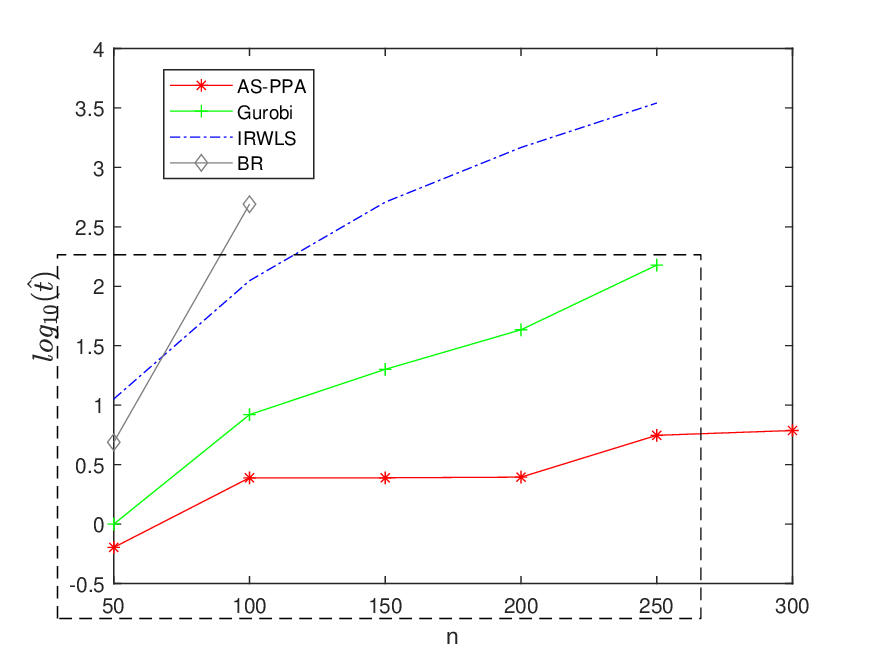}
		\label{fig-4a}
	}
	\subfigure[]{
		\includegraphics[width=0.45\textwidth]{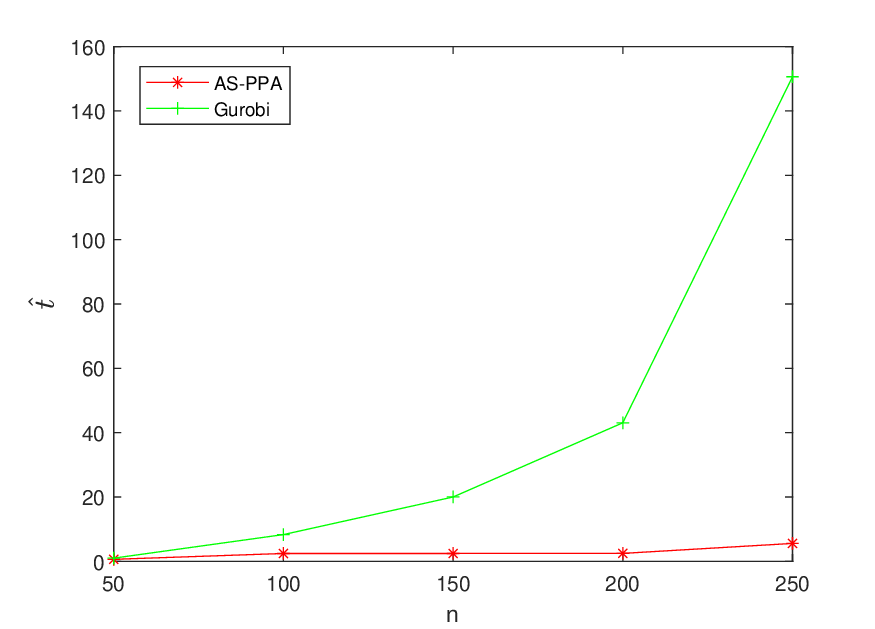}
		\label{fig-4b}
	}
	\caption{$\hat{t}(s)$ for different methods ($p=5n$)}
\end{figure}
\begin{figure}[H]
	\centering
	\subfigure[]{
		\includegraphics[width=0.45\textwidth]{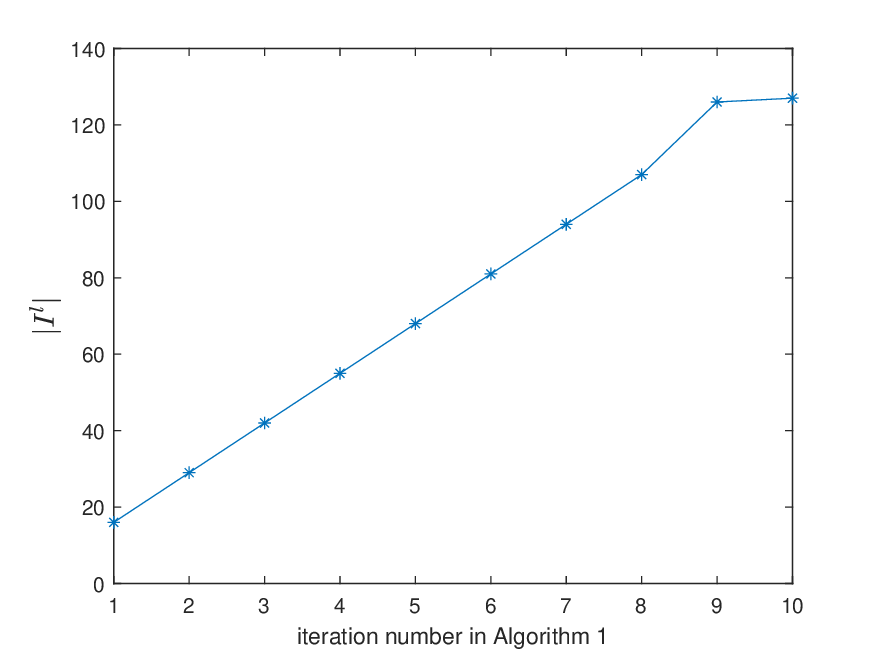}
		\label{fig-5a}
	}
	\subfigure[]{
		\includegraphics[width=0.45\textwidth]{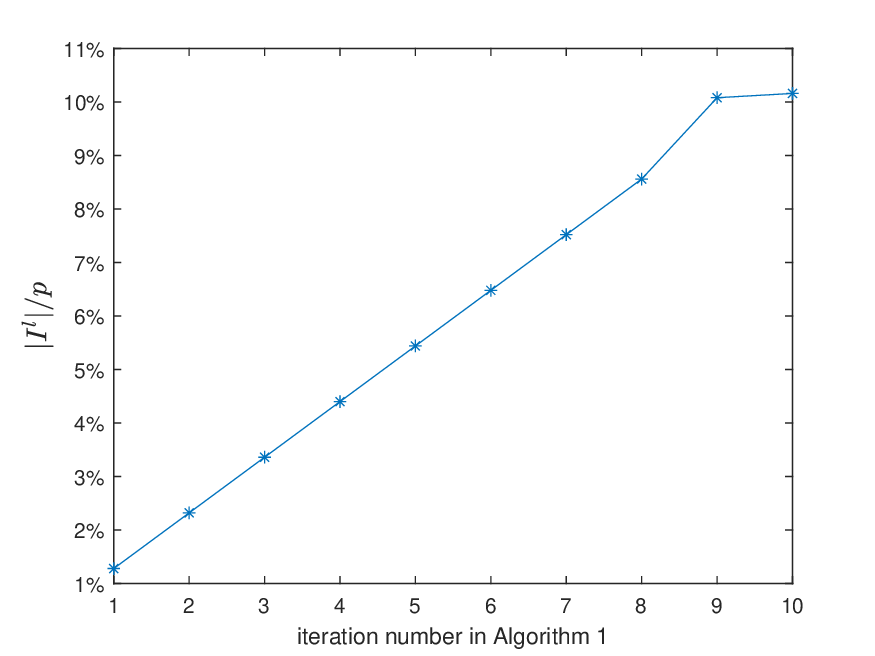}
		\label{fig-5b}
	}
	\caption{${|I^l|}$ and $\frac{|I^l|}{p}$ in AS-PPA for E2 with $n=250$, $p=1250$.}
	\label{fig-5}
\end{figure}

	\begin{table}[htb]
		\centering
		\caption{Cputime for different methods on E2 ($n=200,\ p=1000$).}
				\begin{tabular}{|c|c| c|c| c|}\hline
					\makebox[0.05\textwidth][c]{$(n,p)$}& 
					\makebox[0.05\textwidth][c]{BR} &
					\makebox[0.05\textwidth][c]{IRWLS}& 
					\makebox[0.05\textwidth][c]{Gurobi} & 
					\makebox[0.08\textwidth][c]{AS-PPA} \\
					\hline
					$(50,250)$&4.88&11.27&1.00&0.92\\
					\hline
					$(100,500)$&490.28&111.39&8.32&2.70\\
					\hline
					$(150,750)$&--&510.40&20.01&2.68\\
					\hline
					$(200,1000)$&--&1470.19&43.07&2.80\\
					\hline
					$(250,1250)$&--&3480.74&150.63&4.91\\
					\hline
					$(300,1500)$&--&--&--&5.35\\
					\hline
				\end{tabular}\label{tab-num-2}
\end{table}

Next, we test E1 - E3 on larger-scale problems. That is, $n=200,\ p=1000$. As we can see from Table \ref{tab-num-4}, the quality of solutions provided by AS-PPA and Gurobi are the same, indicate by the same values in val, $L_1,\ L_2$, ME, FP and FN. However, AS-PPA is much faster than Gurobi. Even for E2, which is a difficult case, the cputime consumed by AS-PPA is ten times faster than Gurobi. The reason is that the linear programming problem has 41800 variables with 19900 equality constraints, 39800 box constraints and 2000 inequality constraints. In contrast, for AS-PPA, there are only 2200 variables and 1200 equality constraints and the subproblem in AS algorithm is even in a much smaller scale.

\begin{table}[h]
		\caption{Results on E1 - E3 ($n=200$, $p=1000$).}
		\label{tab-num-4}
			\begin{tabular}{|c|c|r|c|c|c|c|c|r|} \hline
				\multirow{2}*{$\Sigma$}&\multirow{2}*{Error} &
				\multirow{2}*{val$\ \ \ $} &\multirow{2}*{$L_1$} &  \multirow{2}*{$L_2$} & \multirow{2}*{ME}& \multirow{2}*{ FP}&  \multirow{2}*{FN}&  \mc{1}{c|}{$\hat{t}(s)$}\\ 
				~&~ &~&~&~&~&~&~&\mc{1}{c|}{\tiny AS-PPA$\ $$|$$\ $Gurobi \normalsize}\\
				\hline
				\mc{9}{|c|}{E1}\\
				\hline
				\multirow{6}*{$\Sigma_3$}&$N\left(0,0.25\right)$&2.1744&0.59&0.25&0.04&10.78&0&\textbf{0.1}$|$$\ \ \ $34.5\\
				~&$N\left(0,0.1\right)$&2.6893&1.18&0.50&0.18&10.84&0&\textbf{0.1}$|$$\ \ \ $34.4\\
				~&$N\left(0,0.2\right)$&3.1158&1.66&0.71&0.35&10.90&0&\textbf{0.1}$|$$\ \ \ $33.4\\
				~&MN&2.7576&1.24&0.53&0.20&10.96&0&\textbf{0.1}$|$$\ \ \ $34.5\\
				~&$\sqrt{2}t_4$&3.5774&2.00&0.85&0.51&10.80&0&\textbf{0.1}$|$$\ \ \ $33.9\\
				~&Cauthy&11.3205&2.63&1.09&0.86&12.14&0&\textbf{0.2}$|$$\ \ \ $33.6\\
				
				\hline
				\mc{9}{|c|}{E2}\\
				\hline
				\multirow{6}*{$\Sigma_3$}&$N\left(0,0.25\right)$&9.5967&6.12&0.88&0.18&69.96&0.16&\textbf{4.9}$|$$\ \ \ $48.1
				\\
				~&$N\left(0,0.1\right)$&9.9378&11.48&1.66&0.69&68.42&1.84&\textbf{5.5}$|$$\ \ \ $47.0
				\\
				~&$N\left(0,0.2\right)$&10.2303&15.37&2.24&1.32&68.06&3.04&\textbf{6.6}$|$$\ \ \ $46.4
				\\
				~&MN&9.9725&12.16&1.76&0.78&68.04&1.98&\textbf{5.6}$|$$\ \ \ $47.6
				\\
				~&$\sqrt{2}t_4$&10.5935&18.58&2.71&2.07&65.68&4.16&\textbf{4.5}$|$$\ \ \ $45.9
				\\
				~&Cauthy&18.1334&26.03&3.81&5.03&62.80&7.12&\textbf{3.9}$|$$\ \ \ $44.0
				\\
				\hline
				\mc{9}{|c|}{E3}\\
				\hline
				\multirow{3}*{$\Sigma_1$}&$N\left(0,0.1\right)$&2.4285&2.04&0.77&0.15&15.12&0&\textbf{0.2}$|$$\ \ \ $33.6
				\\
				~&MN&2.4994&2.15&0.82&0.16&15.10&0&\textbf{0.2}$|$$\ \ \ $34.1
				\\
				~&Cauthy&11.0742&4.45&1.64&0.70&15.92&0.04&\textbf{0.3}$|$$\ \ \ $33.7
				\\
				\hline
				\multirow{3}*{$\Sigma_2$}&$N\left(0,0.1\right)$&2.7884&0.84&0.47&0.26&2.84&0&\textbf{0.1}$|$$\ \ \ $33.1
				\\
				~&MN&2.8546&0.89&0.50&0.30&2.96&0&\textbf{0.1}$|$$\ \ \ $33.5
				\\
				~&Cauthy&11.3876&1.92&1.04&1.32&3.26&0&\textbf{0.1}$|$$\ \ \ $33.6
				\\
				\hline
				\multirow{3}*{$\Sigma_3$}&$N\left(0,0.1\right)$&2.6893&1.18&0.50&0.18&10.84&0&\textbf{0.1}$|$$\ \ \ $34.3
				\\
				~&MN&2.7576&1.24&0.53&0.20&10.96&0&\textbf{0.1}$|$$\ \ \ $34.4
				\\
				~&Cauthy&11.3205&2.63&1.09&0.86&12.14&0&\textbf{0.2}$|$$\ \ \ $33.4
				\\
				\hline
			\end{tabular}		
	\end{table}

To further demonstrate the performance of AS-PPA, we expand the data scale to $n=2000,\ p=10000$. This scale is already beyond the reach of Gurobi, because it needs to generate a matrix of 1999000$\times$10000, occupying nearly 150G of memory, exceeding the capacity of our laptop. So we only show the results of AS-PPA in Table \ref{tab-num-9}. It can be seen that AS-PPA is quite efficient, even for the difficult example E2. In particular, for the Cauchy distribution of $\Sigma_3$ in E2, it takes less than one minute to return a solution.

\begin{table}[h]
	\caption{Results on E1 - E3 in AS-PPA ($n=2000$, $p=10000$).}
	\label{tab-num-9}
			\begin{tabular}{|c|c| c|c| c|c|c| c| r| c|c|c|c|}\hline
				\makebox[0.04\textwidth][c]{$\Sigma$}&
				\makebox[0.05\textwidth][c]{Error} &
				\makebox[0.05\textwidth][c]{val}&
				\makebox[0.05\textwidth][c]{$L_1$} & 
				\makebox[0.05\textwidth][c]{$L_2$}& 
				\makebox[0.05\textwidth][c]{ME} &
				\makebox[0.05\textwidth][c]{FP}& 
				\makebox[0.05\textwidth][c]{FN}&
				\mc{1}{c|}{ $\hat{t}(s)$}
				& 
				\makebox[0.05\textwidth][c]{It-AS}& 
				\makebox[0.05\textwidth][c]{It-PPA}& 
				\makebox[0.05\textwidth][c]{It-ALM}& 
				\makebox[0.05\textwidth][c]{It-SSN} \\
				\hline
				\mc{13}{|c|}{E1}\\
				\hline
				\multirow{6}*{$\Sigma_3$}&$N\left(0,0.25\right)$ &1.1403&0.2326&0.0893&0.0055&17.9&0.0&1.7&2.0&22.5&74.0&928.0
				\\
				~&$N\left(0,1\right)$ &1.6967&0.4651&0.1786&0.0221&18.5&0.0&1.4&2.0&26.0&72.0&797.8
				\\
				~&$N\left(0,2\right)$ &2.1576&0.6578&0.2525&0.0441&18.6&0.0&1.4&2.0&23.2&75.0&816.6
				\\
				~&$MN$&1.7765&0.4952&0.1905&0.0253&18.1&0.0&1.4&2.0&27.7&100.0&793.3
				\\
				~&$\surd 2t_4$ &2.6392&0.7690&0.2946&0.0599&19.4&0.0&1.5&2.0&26.5&80.0&812.9
				\\
				~&Cauthy&19.3751&0.8081&0.3151&0.0692&17.7&0.0&2.6&3.0&49.5&135.0&1592.5
				\\
				\hline
				\mc{13}{|c|}{E2}\\
				\hline
				\multirow{6}*{$\Sigma_3$}&$N\left(0,0.25\right)$ &3.7933&1.7679&0.2227&0.0246&132.6&0.0&35.6&5.3&61.7&230.0&6148.2
				\\
				~&$N\left(0,1\right)$ & 4.3276&3.5358&0.4453&0.0983&135.8&0.0&33.8&4.5&58.6&164.0&5867.6
				\\
				~&$N\left(0,2\right)$ & 4.7702&5.0004&0.6298&0.1965&137.0&0.0&31.9&4.2&56.6&161.0&5671.2
				\\
				~&$MN$& 4.3945&3.8052&0.4797&0.1140&133.4&0.0&34.0&4.6&57.6&212.0&5915.4
				\\
				~&$\surd 2t_4$ & 
				5.2331&5.8990&0.7440&0.2745&136.0&0.0&31.5&4.2&65.1&170.0&5791.4
				\\
				~&Cauthy& 21.9557&6.9800&0.8821&0.3875&136.5&0.2&34.4&4.2&69.5&189.0&6567.0
				
				\\
				\hline
				\mc{13}{|c|}{E3}\\
				\hline
				\multirow{3}*{$\Sigma_1$}&$N\left(0,1\right)$&1.5900&0.8098&0.2803&0.0198&24.5&0.0&1.6&2.0&24.7&89.0&950.2
				\\
				~&MN& 1.6696&0.8631&0.2997&0.0229&23.2&0.0&1.6&2.0&26.7&83.0&972.8
				\\
				~&Cauthy&19.2677&1.4082&0.4937&0.0615&23.6&0.0&3.7&3.0&53.1&151.0&2026.2
				\\
				\hline
				\multirow{3}*{$\Sigma_2$}&$N\left(0,1\right)$& 1.7489&0.2993&0.1523&0.0278&6.5&0.0&0.5&2.0&20.1&63.0&318.2
				\\
				~&MN&1.8293&0.3190&0.1625&0.0318&6.2&0.0&0.5&2.0&22.6&71.0&328.8
				\\
				~&Cauthy&19.4256&0.5275&0.2724&0.0894&6.1&0.0&1.3&2.8&38.0&137.0&839.7
				\\
				\hline
				\multirow{3}*{$\Sigma_3$}&$N\left(0,1\right)$& 1.6967&0.4651&0.1786&0.0221&18.5&0.0&1.5&2.0&26.0&72.0&797.8
				\\
				~&MN& 1.7765&0.4952&0.1905&0.0253&18.1&0.0&1.5&2.0&27.7&100.0&793.3
				\\
				~&Cauthy& 19.3751&0.8081&0.3151&0.0692&17.7&0.0&2.8&3.0&49.5&135.0&1592.5\\
				\hline
			\end{tabular}
\end{table}

\subsection{Comparison with the Safe Feature Screening Method}\label{section5.4}
In this part, we compare AS-PPA with the safe feature screening method proposed in \cite{SafeRank}. The method is denoted as Safe-PPA, where PPA is also used to solve the subproblem. We test a sequence of $\lambda$ with $[0.1\lambda_{max}:0.1\lambda_{max}:\lambda_{max}]$, where $\lambda_{max}$ is derived in \cite{SafeRank}. We report the following information as in \cite{SafeRank}. The rejection ratio (RR) is the ratio between the number of discarded features by the screening rules and the actual inactive features corresponding to $\lambda_{max}$, that is, RR$=\frac{|I^s|}{|\bar{I}|}$, where $\bar{I}=\left\{i\ |\ x_i = 0,\ i=1,\dots,p\right\}$ is for the solution $x$ obtained for the original problem, and $I^s$ is the set of components that are removed after screening, that is, the components that the screening rule takes zeros. The ratio of speed up (Rsp) is defined as $T_f/T_s$, where $T_f$ is the time of solving the original problem before screening, $T_s$ is the time of solving the subproblem after screening. 

Table \ref{tab-num-10} shows the results under different dimensions, where $\hat{t}$ is the total time of taking 10 values for $\lambda=[0.1\lambda_{max}:0.1\lambda_{max}:\lambda_{max}]$. From Table \ref{tab-num-10}, we can see that AS-PPA is the winner in every way. It takes the smallest cputime after using our sieving strategy, and the ratio of speeding up (Rsp) by AS strategy is more than 80 times. For $n=10,\ p=40000$, the ratio of speeding up even reaches about 2148 times, which verifies that AS strategy is indeed very efficient. For the datasets with larger $n$, the sieving efficiency of Safe-PPA decreases. Especially on $n=100, p=5000$ and $n=100, p=10000$ datasets, Safe-PPA can hardly remove the components which is reflected in the RR value of 0. AS-PPA can still remove more than 90\% of nonzero components.

\begin{table}[h]
	\caption{Results on E4 \protect\footnotemark[1].}
	\label{tab-num-10}
				\begin{tabular}{|c|c| c|c|r| r|}\hline
					$(n,p)$&Method&RR &	\mc{1}{c|}{$T_f$}&\mc{1}{c|}{	$T_s$}&\mc{1}{c|}{Rsp}\\
					\hline
					\multirow{2}*{(10,5000)}&AS-PPA&0.9908&\multirow{2}*{8.685}&0.023&377.90
					\\
					~&Safe-PPA&0.9345&~&0.258&33.67
					\\
					\hline
					\multirow{2}*{(10,10000)}&AS-PPA&0.9904&\multirow{2}*{14.583}&0.019&781.89\\
					~&Safe-PPA&0.9966&~&0.339&42.99\\
					\hline
					\multirow{2}*{(10,40000)}&AS-PPA&0.9902&\multirow{2}*{145.436}&0.068&2148.60
					\\
					~&Safe-PPA&0.9892&~&3.289&44.22\\
					\hline
					\multirow{2}*{(50,5000)}&AS-PPA&0.9610&\multirow{2}*{72.545}&0.544&133.33\\
					~&Safe-PPA&0.2589&~&3.574&20.30
					\\
					\hline
					\multirow{2}*{(50,10000)}&AS-PPA&0.9763&\multirow{2}*{191.147}&0.483&396.14
					\\
					~&Safe-PPA&0.0491&~&9.414&20.31
					\\
					\hline
					\multirow{2}*{(50,40000)}&AS-PPA&0.9885&\multirow{2}*{1542.970}&1.024&1506.59
					\\
					~&Safe-PPA&0.2883&~&77.103&20.01
					\\
					\hline
					\multirow{2}*{(100,5000)}&AS-PPA&0.9290&\multirow{2}*{361.111}&4.220&85.57
					\\
					~&Safe-PPA&0.0000&~&17.869&20.21
					\\
					\hline
					\multirow{2}*{(100,10000)}&AS-PPA&0.9551&\multirow{2}*{1268.488}&6.153&206.15
					\\
					~&Safe-PPA&0.0000&~&57.868&21.92
					\\
					\hline
					\multirow{2}*{(100,40000)}&AS-PPA&0.9676&\multirow{2}*{8823.702}&4.588&1923.39
					\\
					~&Safe-PPA&0.0002&~&627.587&14.06\\
					
					\hline
				\end{tabular}
	\end{table}

	\subsection{Comparison on Square-Root Lasso Model}\label{section5.5}
	In addition to the hdr model, a model named square-root lasso is also of the form \eqref{eq19}. The square-root lasso model has the form of 
	\begin{equation*}
		\min\limits_{x\in\Re^p}\left\|Ax-b\right\|_2+\lambda\left\|x\right\|_1.
	\end{equation*}
	In \cite{square}, the authors transformed the square-root lasso problem into a cone programming problem (CCP) to solve. In \cite{PMM}, Tang et al. proposed a proximal majorization-minimization (PMM) algorithm to solve the square-root lasso problem. In this subsection, we contrast our method with these two methods of square-root lasso model. PMM can solve the square-root lasso model with proper nonconvex regularizations. For comparison, we only fix its loss function as $L_1$ norm and do not compare with non-convex regularizations. We choose $\lambda$ following the way in \cite{square}\footnote{$\lambda$ is set to $1.1\Phi^{-1}(1-0.05/(2n))$ with $\Psi$ being the cumulative normal distribution function.}.

	We report the numerical results in Tables \ref{tab-num-11} for different scales of test problems. We report the primal objective value (val), and the computation time ($\hat{t}$) in seconds. All the results are taken average over 50 random runs. On the small-scale test problems, we can see that AS-PPA and PMM return the same solutions due to the same objective value. For CCP, the value is slightly larger than the other two. Nevertheless, CCP still takes more time to solve the problem, 20 times as long as AS-PPA. In contrast, the performance of PMM is much better, even though it also takes more than 0.05s. For the problems with $n=1000,\ p=5000$, the solutions obtained by AS-PPA and PMM are still basically the same, and the quality is higher than that of CCP. It can be seen that the time consumption of CCP is still the longest, and that of PMM is also longer than that of AS-PPA. This may be because PMM also exploits the sparsity of the model, so although the scale increases, the time increase is not particularly large.
\begin{table}[h]
	\caption{Results on E5 - E6.}
	\label{tab-num-11}
	\begin{tabular}{|c|c|c|c|r|r|c| c|r| r|}\hline
		\makebox[0.05\textwidth][c]{Dataset}&
		\makebox[0.05\textwidth][c]{Error} &
		\makebox[0.05\textwidth][c]{$(n,p)$} &
		\makebox[0.05\textwidth][c]{Method}&
		\mc{1}{c|}{val}& 
		\mc{1}{c|}{$\hat{t}(s)$}&
		\makebox[0.05\textwidth][c]{$(n,p)$} &
		\makebox[0.05\textwidth][c]{Method}&
		\mc{1}{c|}{val}& 
		\mc{1}{c|}{$\hat{t}(s)$}\\
		\hline
		\multirow{3}*{E5}&\multirow{3}*{$N(0,1)$}&\multirow{6}*{$(100,500)$} &AS-PPA&\textbf{26.8793}&\textbf{0.0100}&\multirow{6}*{$(1000,5000)$}&AS-PPA&\textbf{247.6759}&\textbf{1.2942}\\
		~&~&~&CCP&26.9093&0.2145&~&CCP&247.6897&9.0384\\
		~&~&~&PMM&\textbf{26.8793}&0.1162&~&PMM&\textbf{247.6759}&2.2582\\
		\cmidrule{1-2}\cmidrule{4-6}\cmidrule{8-10}
		\multirow{3}*{E6}&\multirow{3}*{$t_4/\sqrt{2}$}&~ &AS-PPA&\textbf{34.3528}&\textbf{0.0093}&~&AS-PPA&\textbf{272.2574}&\textbf{1.2861}\\
		~&~&~&CCP&34.4057&0.1754&~&CCP&272.2851&8.9201\\
		~&~&~&PMM&\textbf{34.3528}&0.0620&~&PMM&\textbf{272.2574}&2.1847\\
		\hline
	\end{tabular}
\end{table}
\section{Conclusion}\label{section6}
In this paper, we designed an adaptive sieving algorithm for the hdr lasso problem, which takes full advantage of the sparsity of the solution. When solving the subproblem with the same form as the original model but with a small scale, we used PPA to solve it, which takes full use of the nondifferentiability of the loss function. Extensive numerical results demonstrate that AS-PPA is robust for different types of noises, which verifies the attractive statistical property as shown in \cite{Wang}. Moreover, AS-PPA is also highly efficient, especially for the case of high-dimensional features, compared with other methods.

\bmhead{Acknowledgments}
We would like to thank the team of Prof. Lingchen Kong from Beijing Jiaotong University for providing the code of the safe screening rule in \cite{SafeRank}. We would also like to thank the editor for landing our submission as well as two anonymous reviewers for their valuable comments based on which the manuscript was further improved. 
\bmhead{Data availability statement}
For the simulated data, they can be regenerated according to the instructions in the paper or obtained from the corresponding author.
\section*{Statements and Declarations}
\bmhead{Funding}
The research of Qingna Li is supported by the National Science Foundation of China (NSFC) 12071032. 
\bmhead{Conflict of interest}
All authors certify that they have no affiliations with or involvement in any organization or entity with any
financial interest or non-financial interest in the subject matter or materials discussed in this manuscript.

\begin{appendices}
\section{Calculation of $\partial h\left(u\right)$}\label{app1}
\begin{proposition}
	Let $h(\cdot)$ be defined as in \eqref{eq55}. The following result holds 
	\begin{equation*}\label{eq28}
	\partial h\left(u\right)=\left\{g+s\in\Re^n\ |\ g_i=c_i-d_i,i=1,\dots,n,\ s=\sum\limits_{k=1}^{m}s_{Q_k}\right\},
\end{equation*}
where $c_i$ is the number of $u$ less than $u_i$, $d_i$ is the number of $u$ greater than $u_i$. Define the index $Q:=\{j\ |\ \exists\ u_i,\ {\rm s.t.}\ u_i=u_j,\ i=1,\dots,n,\ j=1,\dots,n,\ i\neq j\}$, i.e. $Q$ is the index corresponding to equal components in $u$. $Q=\bigcup\limits_{i=1}^m Q_i$, where $m$ is the number of equal groups in the vector $u$, $Q_k$ is the index corresponding to each equality group. Each $s_{Q_k}$ satisfies $\sum\limits_{p\in Q_k} s_p=0,\ s_p\in[-|Q_k|+1,|Q_k|-1],\ p\in Q_k$. 
\end{proposition}
\begin{proof} 
	Define $f\left(r\right)=\left\|r\right\|_1$ and 
\begin{equation*}
	T=\begin{pmatrix}
		1 & -1 & 0  & 0 & \dots &0& 0\\
		1 & 0  & -1 & 0 & \dots &0& 0\\
		\vdots & \vdots &\vdots &\vdots &\ddots & \vdots &\vdots\\ 
		1 & 0  & 0 & 0 & \dots &0& -1\\
		0 & 1 & -1  & 0 & \dots &0& 0\\
		0 & 1  & 0 & -1 & \dots &0& 0\\
		\vdots & \vdots &\vdots &\vdots &\ddots & \vdots &\vdots\\ 
		0 & 1  & 0 & 0 & \dots &0& -1\\
		\vdots & \vdots &\vdots &\vdots &\ddots & \vdots &\vdots\\
		0 & 0  & 0 & 0 & \dots &1& -1\\
	\end{pmatrix}\in\Re^{\frac{n\left(n-1\right)}{2}\times n},\ Tu=\begin{pmatrix}
		u_1-u_2\\u_1-u_3\\\vdots\\u_1-u_n\\u_2-u_3\\u_2-u_4\\\vdots\\u_2-u_n\\\vdots\\u_{n-1}-u_n\\
	\end{pmatrix}\in\Re^{\frac{n\left(n-1\right)}{2}},
\end{equation*} 
thus $h\left(u\right)=f\left(Tu\right)$, ${\rm dom}f=\Re^{\frac{n\left(n-1\right)}{2}}$, the range of A contains at least a point of ${\rm ri}\left({\rm dom}f\right)$. So 
\begin{equation}\label{eq1}
	\partial h\left(u\right)=T^\top\partial f\left(Tu\right)=T^\top\partial f\left(r\right),
\end{equation}
where $r=Tu$, according to \cite[Theorem 23.9]{Rockafellar}. 

We define an index $P:=\left\{i\ |\ r_i=0,\ i=1,\dots,\frac{n\left(n-1\right)}{2}\right\},\ \bar{P}:=\left\{i\ |\ r_i\neq 0,\ i=1,\dots,\frac{n\left(n-1\right)}{2}\right\}$, 
\\$\partial f\left(r\right):=v+\Xi$, where
\begin{equation*}
	v_i=\begin{cases}
		{\rm sign}(r_i)&i\in \bar{P},\\ 0&i\in P,\\
	\end{cases}\ \Xi=\left\{x\in\Re^{\frac{n\left(n-1\right)}{2}}\ |\  x_i\in[-1,1],\ \forall\ i\in P,\ x_i=0,\ \forall\ i\in\bar{P}\right\}.
\end{equation*}
By \eqref{eq1}, it holds that
\begin{equation*}
		\partial h\left(u\right)=T^\top v+T^\top\Xi=\left(T^\top\right) _{\bar{P}}v_{\bar{P}}+\left\{s\in\Re^n\ |\ s=\left(T^\top\right)_{P} u,\ u\in\Re^{|P|},\ u_i\in[-1,1],\ i\in P\right\}.
\end{equation*}
Next, we consider the two parts in the above equation separately. 

For the first part, we first define the function $\delta_{i,j}\left(u\right)=\begin{cases}
	1& u_i>u_j\\-1&u_i<u_j\\0&u_i=u_j
\end{cases}$. For any $i$-th row of $T^\top$, 
\begin{equation*}
	\begin{split}
		\sum\limits_{t=1}^{\frac{n\left(n-1\right)}{2}}\left(T^\top\right)_{i,t}v_t=\sum\limits_{t\in \bar{P}}\left(T^\top\right)_{i,t}v_t=\sum\limits_{j=1}^{i-1}\left(-1\right)\delta_{j,i}\left(u\right)+\sum\limits_{j=i+1}^{n}\delta_{i,j}\left(u\right)=\sum\limits_{j=1}^{n}\delta_{i,j}\left(u\right)=c_i-d_i,
	\end{split}
\end{equation*}
where $c_i$ is the number of $u$ less than $u_i$, $d_i$ is the number of $u$ greater than $u_i$.

For the second part, the index of zero in $r$ corresponding to $Q_k$ of $u$ is marked as $P_k$. Obviously, $Q_p\bigcap Q_q=\emptyset,\ P_p\bigcap P_q=\emptyset,\ p\neq q$, and $\bigcup\limits_{k=1}^m P_k=P$. We continue to analyze $\left\{s\in\Re^n\ |\ s=\left(T^\top\right)_{P} u,\ u\in\Re^{|P|},\ u_i\in[-1,1],\ i\in P\right\}$, 
\begin{equation*}
	s=\left(T^\top\right)_P u=\sum_{k=1}^m \left(T^\top\right)_{P_k} u_{P_k}.
\end{equation*}
In fact, $\left(T^\top\right)_{P_k}$ is all zero except the rows indexed by $Q_k:=\{q_1^k,q_2^k,\dots,q_{m_k}^k\}$. The components of $\left(T^\top\right)_{P_k}u_{P_k}$ is all equal to zero except components indexed by $q_1^k,q_2^k,\dots,q_{m_k}^k$. Actually, 
\setcounter{MaxMatrixCols}{20}
\begin{equation*}
	\left(T^\top\right)_{P_k}u_{P_k}=\left[0,\cdots,0,s_{q_1^k}^\top,0,\cdots,0,s_{q_2^k}^\top,0,\cdots,s_{q_{m_k}^k}^\top,0,\cdots,0\right]^\top.
\end{equation*}
As a result, $s_{\bar{Q}}=0,\ s_{Q}$ can be divided into $s_{Q_1},s_{Q_2},\dots,s_{Q_m}$ according to the equal parts in $u$, and
\begin{equation*}
	s_{Q_k}=\left(T^\top\right)_{Q_k,P_k}u_{P_k},
\end{equation*}
where $\left(T^\top\right)_{Q_k,P_k}$ represents a matrix consisting of rows indexed by $Q_k$ and columns indexed by $P_k$ of $T^\top$. This is equivalent to removing the rows in $\left(T^\top\right)_{P_k}$ that are all zeros. 

Now we only focus on the part about $s_{Q_k}$, after removing the part that is 0, it holds that
\setcounter{MaxMatrixCols}{20}
\begin{equation*}
	\left(T^\top\right)_{Q_k,P_k}u_{P_k}=\begin{pmatrix}
		1&1&\dots&1&0&0&\dots&0&\dots&0\\
		-1&0&\dots&0&1&1&\dots&1&\dots&0\\
		0&-1&\dots&0&-1&0&\dots&0&\dots&0\\
		0&0&\dots&0&0&-1&\dots&0&\dots&0\\
		\vdots&\vdots& \ddots &\vdots&\vdots&\vdots& \ddots &\vdots& \ddots &\vdots\\
		0&0&\dots&0&0&0&\dots&0&\dots&1\\
		0&0&\dots&-1&0&0&\dots&-1&\dots&-1\\
	\end{pmatrix}u_{P_k}=s_{Q_k}.
\end{equation*}
We can find
\begin{equation*}
	\sum\limits_{p\in Q_k} s_p=\begin{pmatrix}
		1,\dots,1
	\end{pmatrix}\left(T^\top\right)_{Q_k,P_k}u_{P_k}=\begin{pmatrix}
		0,\dots,0
	\end{pmatrix}u_{P_k}=0.
\end{equation*}
Overall, the form of $\partial h(u)$ is $\left\{g+s\in\Re^n\ |\ g_i=c_i-d_i,i=1,\dots,n,\ s=\sum\limits_{k=1}^{m}s_{Q_k}\right\}$.
\end{proof}
\section{Proof of Lemma 1}\label{secA1}
To prove Lemma \ref{lem1}, we need the following proposition.
\begin{proposition}{\rm \cite[Proposition 2.8]{H}}\label{pro5}
	$\Sigma_Q$ and $C$ have the same definitions as above, $\Sigma_Q$ can be expressed as $\Sigma_Q={\rm Diag}\left(\Lambda_1,\dots,\Lambda_N\right)$, where $\Lambda_i$ is either the $n_i\times n_i$ zero matrix
	$O_{n_i}$ or the $n_i\times n_i$ identity matrix $I_{n_i}$, and any two consecutive blocks cannot be of the
	same type. Denote $K:=\left\{k\ |\ \Lambda_k=I_{n_k},\ k=1,\right.$
	$\left.\dots,N\right\}$. Then it holds that 
	\begin{equation*}
		I_n-C^{\top}\left(\Sigma_Q CC^{\top}\Sigma_Q\right)^\dag C={\rm Diag}\left(\Gamma_1,\dots,\Gamma_N\right),
	\end{equation*}
	where 
	\begin{equation*}
		\Gamma_i=\begin{cases}
			\frac{1}{n_i+1}E_{n_i+1},&if\ i\in K,\\
			I_{n_i},&if \ i\notin K\ and\ i\in\{1,N\},\\
			I_{n_i-1},&{\rm \text{otherwise}}.
		\end{cases}
	\end{equation*}
\end{proposition}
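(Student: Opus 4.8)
The plan is to first simplify the pseudoinverse into a genuine inverse, then recognise the resulting matrix as an orthogonal projection, and finally read off its block structure from the alternating pattern of $\Sigma_Q$. For the first step I would use that $\Sigma_Q$ is a $0$--$1$ diagonal selector. Writing $B:=\Sigma_Q C$, one has $\Sigma_Q CC^\top\Sigma_Q=BB^\top$, an $(n-1)\times(n-1)$ matrix whose rows and columns outside $Q$ vanish and whose $Q\times Q$ principal block is exactly $C_QC_Q^\top$. Since $Q\in\mathcal{Q}_\mathcal{D}(\tilde y)$ forces $C_Q$ to have full row rank, $C_QC_Q^\top$ is invertible, so $\left(\Sigma_Q CC^\top\Sigma_Q\right)^\dag$ equals $\left(C_QC_Q^\top\right)^{-1}$ embedded on the $Q\times Q$ block and zero elsewhere. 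Because the rows of $C$ indexed by $Q$ are precisely the rows of $C_Q$, substituting this back gives $C^\top\left(\Sigma_Q CC^\top\Sigma_Q\right)^\dag C=C_Q^\top\left(C_QC_Q^\top\right)^{-1}C_Q$, which also reproves the identity used in Proposition~\ref{pro4}. Hence the matrix in question is $P:=I_n-C_Q^\top\left(C_QC_Q^\top\right)^{-1}C_Q$, the orthogonal projection onto $\ker C_Q$.

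Next I would describe $\ker C_Q$ combinatorially. Writing $e_i$ for the $i$-th standard basis vector, the $i$-th row of $C$ is $e_i^\top-e_{i+1}^\top$, so the condition $C_Qv=0$ reads $v_i=v_{i+1}$ for every $i\in Q$; thus $\ker C_Q$ consists exactly of the vectors that are constant on each maximal run of coordinates in $\{1,\dots,n\}$ linked by active constraints. For such a constant-on-cells subspace the orthogonal projection is block diagonal in the natural ordering, acting as the averaging matrix $\tfrac{1}{s}E_s$ on a cell of size $s$ and as the scalar $1$ on a singleton cell. It then remains to translate $\Sigma_Q={\rm Diag}(\Lambda_1,\dots,\Lambda_N)$ into this coordinate partition. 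If block $i$ is active ($i\in K$) and occupies constraint indices $\{s_i,\dots,e_i\}$ with $n_i=e_i-s_i+1$, its constraints tie together the $n_i+1$ coordinates $\{s_i,\dots,e_i+1\}$, producing a cell of that size and hence the diagonal block $\Gamma_i=\tfrac{1}{n_i+1}E_{n_i+1}$; the coordinates captured by no active cell are the free singletons contributed by the inactive blocks, which yield identity blocks.

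Finally I would carry out the bookkeeping that fixes the size of each inactive block's identity contribution. The two endpoint coordinates $s_i$ and $e_i+1$ of an active cell are absorbed from the neighbouring inactive blocks; consequently an inactive block sitting between two active blocks loses one coordinate at each end and retains $n_i-1$ free singletons, giving $\Gamma_i=I_{n_i-1}$, whereas an inactive block at the first or last position has only one active neighbour, loses a single coordinate, and retains $n_i$ singletons, giving $\Gamma_i=I_{n_i}$. This is precisely the stated case distinction, and assembling the blocks in order yields $P={\rm Diag}(\Gamma_1,\dots,\Gamma_N)$. As a consistency check one verifies $\sum_i\dim\Gamma_i=n$: the alternation of block types makes $|K|$ minus the number of interior inactive blocks equal to $1$ in every configuration, so the total is $\sum_i n_i+1=(n-1)+1=n$.

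The hard part will be exactly this off-by-one index accounting, namely reconciling the $n-1$ constraint indices carried by the blocks $\Lambda_i$ with the $n$ coordinate indices carried by the blocks $\Gamma_i$, and correctly attributing each boundary coordinate of an active chain so as to justify the $-1$ in the interior case and the boundary exception. Once the boundary/interior distinction for inactive blocks is pinned down, the block-averaging form of the projection onto a constant-on-cells subspace completes the argument, while the pseudoinverse reduction and the projection identity in the first step are routine.
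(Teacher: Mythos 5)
The paper offers no proof of this proposition to compare against: it is quoted verbatim from \cite[Proposition 2.8]{H} and used as a black box in the appendix proof of Lemma \ref{lem1}. Judged on its own merits, your blind proof is correct and self-contained. The pseudoinverse reduction is sound: $\Sigma_Q CC^\top\Sigma_Q$ vanishes outside the $Q\times Q$ block, that block equals $C_QC_Q^\top$, which is invertible since $Q\in\mathcal{Q}_\mathcal{D}(\tilde y)$ requires $C_Q$ to have full row rank, whence $C^\top\left(\Sigma_Q CC^\top\Sigma_Q\right)^\dag C=C_Q^\top\left(C_QC_Q^\top\right)^{-1}C_Q$ (and, as you observe, this simultaneously establishes the identity asserted in Proposition \ref{pro4}). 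Recognizing $I_n-C_Q^\top\left(C_QC_Q^\top\right)^{-1}C_Q$ as the orthogonal projector onto $\ker C_Q$, describing that kernel as the subspace of vectors constant on the cells glued together by the constraints $v_i=v_{i+1}$, $i\in Q$, and writing the projector as block-diagonal averaging matrices $\frac{1}{s}E_s$ are all correct, and your boundary bookkeeping checks out: an active block of $n_i$ constraints welds $n_i+1$ coordinates into one cell; an interior inactive block cedes one coordinate to each of its two (necessarily active) neighbours, leaving $I_{n_i-1}$; a first or last inactive block cedes only one coordinate, leaving $I_{n_i}$. The one flaw is your claim that the dimension count closes ``in every configuration'': it fails in the degenerate case $N=1$ with the single block inactive, i.e.\ $Q=\emptyset$, where the left-hand side is $I_n$ while the stated block formula would produce $I_{n-1}$. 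That defect belongs to the statement as transcribed from \cite{H} rather than to your argument, but your proof, like the formula, tacitly assumes $Q\neq\emptyset$ and should say so explicitly.
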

{\bf Proof of Lemma \ref{lem1}.}
For any $\mathcal{H}\in\hat{\partial}^2 \varphi_j\left(x\right)$, there is 
\begin{equation*}
	\mathcal{H}={\rho_j}\left(A^\top\left(I-V_1\right)A+I-V_2\right)+\frac{1}{\sigma_{k}}I,
\end{equation*}
where $V_1\in\mathcal{M}(f_1(x))$ and $V_2\in\partial {\rm Prox}_{\frac{\lambda}{\rho_j} \left\|\cdot\right\|_1}\left(f_2(x)\right)$. For any $d\in\Re^p$, $d\neq 0$, there is 
\begin{equation*}
	d^\top \mathcal{H} d={\rho_j}d^\top\left(A^\top\left(I-V_1\right)A+I-V_2\right)d+\frac{1}{\sigma_{k}}\|d\|_2^2.
\end{equation*}
Notice that by \eqref{eq56}, $V_2={\rm Diag}\left(v\right)$, where $v_i\in[0,1]$. We can see that $I-V_2$ is also a diagonal matrix with diagonal elements in $[0,1]$. Therefore, we have $d^\top\left(I-V_2\right)d\ge0$.

On the other hand, by Proposition \ref{pro4} and Proposition \ref{pro5}, we have 
\begin{equation*}
	V_1=P_y^{\top}\left({\rm Diag}\left(\Gamma_1,\dots,\Gamma_N\right)\right)P_y,
\end{equation*}
where $y=f_1(x)$, $P_y\in\Re^{n\times n}$ is a permutation matrix such that $y^\downarrow=P_yy$. $
\Gamma_i$ and $K$ are defined as in Proposition \ref{pro5}. Therefore, for $i\in N$, define 
\begin{equation}\label{eq44}
	\Theta_i=\begin{cases}
		I_{n_i+1}-\frac{1}{n_i+1}E_{n_i+1},&if\ i\in K,\\
		\textbf{O}_n,&if \ i\notin K\ and\ i\in\{1,N\},\\
		\textbf{O}_{n_i-1},&{\rm \text{otherwise}}.
	\end{cases}
\end{equation}
It is easy to verify that 
\begin{equation}\label{eq10}
	\begin{split}
		d^\top A^\top(I-V_1)Ad=&d^\top A^\top P_y^{\top}\left(I- {\rm Diag}\left(\Gamma_1,\dots,\Gamma_N\right)\right)P_y A d\\
		=&\hat{d}^\top\left(I- {\rm Diag}\left(\Gamma_1,\dots,\Gamma_N\right)\right)\hat{d}\\
		=&\hat{d}^\top{\rm Diag}\left(\Theta_1,\dots,\Theta_N\right)\hat{d}\quad \left(\text{by}\ \eqref{eq44}\right)\\
		=&\sum\limits_{i=1}^{N}\hat{d}_{\Theta_i}^\top\Theta_i \hat{d}_{\Theta_i},\\
	\end{split}
\end{equation}
where $\hat{d}=P_yAd$ and $\hat{d}_{\Theta_i}$ represents the components of $\hat{d}$ corresponding to $\Theta_i,\ i=1,\dots,N$. 

Obviously, for $i\notin K$, $\hat{d}_{\Theta_i}^\top\Theta_i \hat{d}_{\Theta_i}= 0$. If $i\in K$, we only need to show that for any $v\in\Re^r$, $v^\top\left(I_r-\frac{1}{r}E_r\right)v\ge 0$. Here $r\in\left\{n_i+1\ |\ i\in K\right\}$. Indeed, note that
\begin{equation*}\label{eq54}
	\begin{split}
		v^\top\left(I_r-\frac{1}{r}E_r\right)v=&\|v\|^2-\frac{1}{r}\sum\limits_{1\le j\le r}v_j\left(\sum\limits_{1\le i\le r}v_i\right)\\
		=&\|v\|^2-\frac{1}{r}\|v\|^2-\frac{1}{r}\sum\limits_{1\le i<j\le r}2v_jv_i\\
		=&\|v\|^2-\frac{1}{r}\|v\|^2-\frac{1}{r}\sum\limits_{1\le i<j\le r}\left(v_i^2+v_j^2\right)+\frac{1}{r}\sum\limits_{1\le i<j\le r}\left(v_i-v_j\right)^2\\
		=&\|v\|^2-\frac{1}{r}\|v\|^2-\frac{1}{r}\left(r-1\right)\|v\|^2+\frac{1}{r}\sum\limits_{1\le i<j\le r}\left(v_i-v_j\right)^2\\
		=&\frac{1}{r}\sum\limits_{1\le i< j\le r}\left(v_i-v_j\right)^2\ge 0.\\
	\end{split}
\end{equation*}
Together with \eqref{eq10}, we have $d^\top A^\top \left(I-V_1\right)Ad\ge 0,$ for any $d\neq0$. Consequently, for any $d\in\Re^p,\ d\neq 0,\ $there is $d^\top \mathcal{H} d\ge \frac{1}{\sigma_{k}}\|d\|_2^2>0$. That is, $\mathcal{H}$ is positive definite. The proof is finished.
\qed
\end{appendices}


\bibliography{ASPPA-bib}
\end{document}